\title[A tensor-train reduced basis solver for parameterized partial differential equations on Cartesian grids]{A tensor-train reduced basis solver for parameterized partial differential equations on Cartesian grids}
\date{\today}
\address{$^\dagger$School of mathematics\\Monash university\\Clayton\\Victoria 3800\\Australia}
\address{$^\P $School of Mathematics and Statistics \\University of Sydney\\New South Wales 2006\\Australia}
\author[N. Mueller]{Nicholas Mueller$^\dagger$}
\email{nicholas.mueller@monash.edu}
\author[Y. Zhao]{Yiran Zhao$^\P$}
\email{yiran.zhao@monash.edu}
\author[S. Badia]{Santiago Badia$^{\dagger}$}
\email{santiago.badia@monash.edu}
\author[T. Cui]{Tiangang Cui$^\P$}
\email{tiangang.cui@monash.edu}
  \DeclareSymbolFont{AMSb}{U}{msb}{m}{n}
  \DeclareSymbolFontAlphabet{\mathbb}{AMSb}
\DeclareFontFamily{U}{mathx}{\hyphenchar\font45}
\DeclareFontShape{U}{mathx}{m}{n}{<-> mathx10}{}
\DeclareSymbolFont{mathx}{U}{mathx}{m}{n}
\DeclareMathAccent{\widebar}{0}{mathx}{"73}
\tikzstyle{arrow} = [thick,->,>=stealth]
\tikzset{Matrix/.style={matrix of nodes, font=\footnotesize,text height=1pt, text depth=0.5pt, text width=8.5pt, align=center, column sep=0pt, row sep=0pt, nodes in empty cells}}
\newtheorem{remark}{Remark}
\newtheorem{theorem}{Theorem}
\definecolor{shadecolor}{gray}{.92}
\definecolor{incolor}{rgb}{0,0,.7}
\definecolor{outcolor}{rgb}{.65,0,0}
\definecolor{syntaxcolor}{rgb}{.65,0,0}
\definecolor{bg}{rgb}{0.93,0.93,0.93}
\definecolor{myblue}{RGB}{93,188,210}
\definecolor{mygreen}{RGB}{189,210,93}
\definecolor{myorange}{RGB}{210,173,93}
\definecolor{myred}{RGB}{210,93,130}
\definecolor{mydarkblue}{RGB}{93,130,210}
\definecolor{mydarkgreen}{RGB}{93,210,173}
\newcommand{\N}{\mathbb{N}}
\newcommand{\R}{\mathbb{R}}
\newcommand{\norm}[1]{\left\lVert#1\right\rVert}
\DeclareMathOperator*{\argmax}{\arg\max}
\newcommand{\zero}[1]{\bm{0}_{#1}}
\acrodef{pde}[PDE]{partial differential equation}
\acrodef{fe}[FE]{finite element}
\acrodef{fem}[FEM]{finite element method}
\acrodef{dof}[DOF]{degree of freedom}
\acrodef{be}[BE]{Backward Euler}
\acrodef{hf}[HF]{high-fidelity}
\acrodef{fom}[FOM]{full-order model}
\acrodef{lhs}[LHS]{left-hand side}
\acrodef{rhs}[RHS]{right-hand side}
\acrodef{rom}[ROM]{reduced-order model}
\acrodef{rb}[RB]{reduced basis}
\acrodef{svd}[SVD]{singular value decomposition}
\acrodef{rsvd}[RSVD]{randomized SVD}
\acrodef{sthosvd}[ST-HOSVD]{sequentially truncated high-order singular value decomposition}
\acrodef{pod}[POD]{proper orthogonal decomposition}
\acrodef{tpod}[TPOD]{truncated proper orthogonal decomposition}
\acrodef{strb}[ST-RB]{space-time reduced basis}
\acrodef{eim}[EIM]{empirical interpolation method}
\acrodef{deim}[DEIM]{discrete empirical interpolation method}
\acrodef{mdeim}[MDEIM]{empirical interpolation method in matrix form}
\acrodef{stmdeimrb}[ST-MDEIM-RB]{space-time MDEIM-RB}
\acrodef{dl}[DL]{deep learning}
\acrodef{nn}[NN]{neural network}
\acrodef{tt}[TT]{tensor train}
\acrodef{ttrb}[TT-RB]{tensor train reduced basis}
\acrodef{ttsvd}[TT-SVD]{tensor train SVD}
\acrodef{ttcross}[TT-CROSS]{tensor train cross}
\acrodef{ttmdeim}[TT-MDEIM]{tensor train MDEIM}
\acrodef{jit}[JIT]{just-in-time}
\acrodef{als}[ALS]{alternating linear scheme}
\begin{document}

\begin{abstract}
    In this manuscript, we introduce the tensor-train reduced basis method, a novel projection-based reduced-order model designed for the efficient solution of parameterized partial differential equations. While reduced-order models are widely used for their computational efficiency compared to full-order models, they often involve significant offline computational costs. Our proposed approach mitigates this limitation by leveraging the tensor train format to efficiently represent high-dimensional finite element quantities. This method offers several advantages, including a reduced number of operations for constructing the reduced subspaces, a cost-effective hyper-reduction strategy for assembling the PDE residual and Jacobian, and a lower dimensionality of the projection subspaces for a given accuracy. We provide a posteriori error estimates to validate the accuracy of the method and evaluate its computational performance on benchmark problems, including the Poisson equation, heat equation, and transient linear elasticity in two- and three-dimensional domains. Although the current framework is restricted to problems defined on Cartesian grids, we anticipate that it can be extended to arbitrary shapes by integrating the tensor-train reduced basis method with unfitted finite element techniques.
\end{abstract}

\maketitle

\section{Introduction}
\label{sec: introduction}

Projection-based \acp{rom} are advanced numerical techniques designed to approximate parametric \ac{hf} models, which typically involve finely resolved spatio-temporal discretizations of \acp{pde}. These methods aim to capture the \ac{hf} parameter-to-solution manifold within a carefully chosen vector subspace. The process generally consists of a computationally intensive offline phase, during which the subspace is constructed and the (Petrov-)Galerkin projection of the \ac{hf} equations onto this subspace is performed. This phase often includes the hyper-reduction of nonaffinely parameterized \ac{hf} quantities, such as residuals and Jacobians. Once the offline phase is complete, an efficient online phase follows, enabling the rapid computation of accurate solutions for new parameter selections. 

Among the most widely recognized projection-based \acp{rom} is the \ac{rb} method \cite{quarteroni2015reduced,negri2015reduced,rozza2013reduced,dalsanto2019algebraic}. This approach constructs a reduced-dimensional subspace by extracting it from a dataset of \ac{hf} solutions, commonly referred to as snapshots. The reduced subspace is then used to minimize the \ac{fom} residual under a suitable norm. However, the standard \ac{rb} algorithm struggles to efficiently address time-dependent problems, prompting the development of novel space-time \acp{rom} that simultaneously reduce both the spatial and temporal complexity of the \ac{fom}. Among these, the \ac{strb} method, first introduced in \cite{choi2021space} for solving a linear 2D Boltzmann transport equation, stands out as a prominent example. This method constructs a projection space by taking the Kronecker product of spatial and temporal subspaces, which are derived from the \ac{hf} snapshots. 

The \ac{strb} approach has been extended in various works, such as \cite{doi:10.1137/22M1509114}, which addresses a more complex 3D Stokes equation, and \cite{MUELLER2024115767}, where multiple hyper-reduction strategies are proposed within the same space-time framework. In \ac{strb}, the \ac{hf} snapshots are reshaped into either a spatial or temporal matrix. The column space of the spatial matrix captures the spatial evolution of the \ac{hf} variable for fixed time and parameter values and is used to compute the spatial reduced subspace. Similarly, the columns of the temporal matrix represent the temporal evolution of the \ac{hf} variable for fixed spatial coordinates and parameter values and are used to compute the temporal reduced subspace. These subspaces are typically constructed using direct methods such as \ac{tpod}, although greedy algorithms \cite{quarteroni2015reduced,prudhomme:hal-00798326,Prudhomme} may also be employed. This procedure can be interpreted as a Tucker decomposition \cite{hackbusch2014numerical,hackbusch2012tensor,tucker1966some} of the snapshots, where the data is viewed as a tensor with axes corresponding to the spatial, temporal, and parametric dimensions. 

In this work, we introduce a novel \ac{rb} method, termed \ac{ttrb}, which leverages the recently developed \ac{tt} decomposition technique for tensors. Our approach identifies a joint spatio-temporal subspace by applying a \ac{tt} decomposition \cite{oseledets2010tt,oseledets2011tensor,gorodetsky2019continuous} to the snapshots, represented as a tensor, akin to the process used in Tucker decompositions. The \ac{tt} representation of a tensor is typically computed using one of two strategies. The first, known as \ac{ttsvd} \cite{oseledets2010tt,oseledets2011tensor}, involves the successive application of \ac{tpod} to various matrix unfoldings of the snapshots tensor. The second, referred to as \ac{ttcross} \cite{gorodetsky2019continuous,oseledets2010tt,bigoni2016spectral,cui2021deep,dolgov2018approximation}, constructs the \ac{tt} decomposition greedily through a pseudo-skeleton approximation \cite{dolgov2014alternating,goreinov1997pseudo}. While \ac{ttsvd} generally provides higher accuracy, it is computationally more expensive compared to \ac{ttcross}.

Recent works have explored the use of \ac{tt} decompositions within the \ac{rb} framework \cite{mamonov2024priorianalysistensorrom,MAMONOV2022115122}, exploiting the Cartesian structure of parameter spaces to achieve low-rank tensor approximations. While these approaches show promise, we argue that addressing the spatial complexity of solutions offers even greater potential for computational speedup. This is because the spatial number of \acp{dof} is often significantly larger than the temporal and parametric sizes. The core efficiency of our \ac{ttrb} method lies in the so-called ``split-axes'' representation of snapshots, which decomposes the spatial evolution of the snapshots along each Cartesian direction. This representation enables more efficient computation of operations involving spatial quantities. For problems defined on Cartesian geometries, the ``split-axes'' representation is straightforward to derive. For more general geometries, it could potentially be obtained using unfitted element discretizations \cite{BADIA2018533}, though we leave this avenue for future investigation. While this strategy could, in principle, be applied within a \ac{strb} framework, doing so would significantly increase the dimensionality of the reduced subspaces, thereby degrading the online performance of the algorithm. 

\ac{ttrb} decompositions offer significantly higher accuracy than \ac{strb} for a given reduced subspace dimension. This advantage enables the construction of reduced-order models on subspaces with very small dimensions while maintaining high accuracy. Throughout this work, we substantiate this claim through both theoretical analysis and numerical validation. While our primary focus is on addressing time-dependent problems, we present \ac{ttrb} as a robust alternative to \ac{strb}. Nonetheless, it is important to highlight that \ac{ttrb} is a versatile \ac{rom} framework, capable of efficiently approximating parameterized \acp{pde}, whether or not they involve time dependence. 

The goal of this work is to develop a projection-based \ac{rom} tailored for the efficient approximation of parametric, potentially transient \acp{pde}, which satisfy the reducibility criteria outlined in \cite{Unger_2019,quarteroni2015reduced}. Our proposed \ac{rom} exclusively exploits \ac{tt} decompositions of \ac{hf} quantities and performs all necessary operations directly within this decomposition framework. The main contributions of this paper are summarized as follows:
\begin{enumerate}
  \item We propose a novel \ac{ttsvd} algorithm for efficiently constructing a projection subspace characterized by a non-standard orthogonality condition, such as one induced by a given inner product norm. Unlike the standard \ac{ttsvd}, which produces a basis orthogonal in the Euclidean norm, our approach accommodates alternative orthogonality conditions. We provide a detailed comparison between the proposed algorithm and the basis construction via \ac{tpod}, highlighting differences in computational cost and accuracy.
  \item We introduce \ac{ttmdeim}, a hyper-reduction strategy within the \ac{tt} framework that simplifies the TT-cross-DEIM procedure presented in \cite{dektor2024collocationmethodsnonlineardifferential}. Unlike the standard \ac{mdeim}, this method empirically interpolates tensors directly in their \ac{tt} format. We employ \ac{ttmdeim} to achieve an affine decomposition of the residual and Jacobian for the problems under consideration, demonstrating that it achieves accuracy comparable to \ac{mdeim}.
  \item We derive a posteriori error estimates for the combination of the method indicated above, referred to as the \ac{ttrb} method. These estimates reveal that the accuracy of the procedure is closely tied to a user-defined tolerance, which governs both the precision of the \ac{tt} subspace and the \ac{ttmdeim} approximation.
\end{enumerate}

This article is organized as follows. We conclude this section by introducing the notation used throughout the paper and briefly reviewing the key properties of \ac{tt} decompositions. In Sect.~\ref{sec: rb}, we present the \ac{fom} defined by a parameterized transient \ac{pde} and outline the basic implementation of the \ac{strb} approach. Sect.~\ref{sec: tt-rb} introduces our novel \ac{ttrb} strategy, detailing the construction of the \ac{tt} reduced subspace, the \ac{tt} hyper-reduction technique, and the projection of the \ac{fom} onto this subspace. We also analyze the computational cost and derive a posteriori error estimates for these steps. In Sect.~\ref{sec: results}, we showcase the numerical results obtained by applying the \ac{ttrb} method to various test cases. Finally, in Sect.~\ref{sec: conclusions}, we summarize our findings and discuss potential extensions of this work.

\subsection{Notation}
\label{subs: notation}
 
The notation used in this paper is inspired by \cite{MUELLER2024115767}. We work with multidimensional arrays (tensors) that represent quantities dependent on space, time, and parameters. The subscripts $s$, $t$, and $\mu$ denote the spatial, temporal, and parametric axes, respectively. For Cartesian geometries, the spatial axis can be further decomposed into $d$ Cartesian directions, with subscripts $1, \dots, d$ referring to each direction. A superscript $\mu$ is used to indicate quantities that depend on an unspecified parameter value. The parameters considered in this work are $p$-dimensional vectors sampled from a given parameter space, with a generic parameter denoted as $\bm{\mu} \in \R^p$. 

In the two-dimensional case, we have the parameter-dependent vectors
\begin{equation*}
    \bm{U}^\mu_1 \in \R^{N_1}, \qquad 
    \bm{U}^\mu_2 \in \R^{N_2}, \qquad 
    \bm{U}^\mu_t \in \R^{N_t} 
\end{equation*}
belonging to the first coordinate space $ \bm{\mathcal U}^\mu_1$, the second coordinate space $ \bm{\mathcal U}^\mu_2 $ and the temporal space $ \bm{\mathcal U}^\mu_t $, respectively. Their tensor product 
\begin{equation*}
    \bm{U}^{\mu}_{1,2,t} = \bm{U}^\mu_1 \otimes \bm{U}^\mu_2 \otimes \bm{U}^\mu_t
\end{equation*}
is an element in the parameter-dependent tensor product space 
\begin{equation*}
    \bm{\mathcal{U}}^{\mu}_{1,2,t} = \bm{\mathcal{U}}^{\mu}_1 \otimes \bm{\mathcal{U}}^{\mu}_2 \otimes \bm{\mathcal{U}}^{\mu}_t.
\end{equation*}
Throughout this work, we frequently perform re-indexing operations on tensors, which rearrange their subscripts without altering their entries. To simplify notation, we use the same variable names to represent these tensors, even when their subscripts are rearranged. For instance, consider a tensor
\begin{equation*}
    \bm{U}_{1,2,t,\mu} \in \R^{N_1 \times N_2 \times N_t \times N_{\mu}}.
\end{equation*}
We indicate with 
\begin{equation*}
    \bm{U}_{2,1,\mu,t} \in \R^{N_2 \times N_1 \times N_{\mu} \times N_t}
\end{equation*}
the result of a \textit{permutation of axes}, and with 
\begin{equation*}
    \bm{U}_{12,t\mu} \in \R^{N_{12} \times N_tN_{\mu}}, \quad N_{12} = N_1N_2,
\end{equation*}
the result of a \textit{merging of axes}. For convenience, we define $N_s = \prod_{i=1}^d N_i$, where the subscript $s$ denotes the merging all spatial axes.

The goal of our research is to leverage low-rank approximations of solution manifolds to develop an efficient \ac{rom} solver for \acp{pde}. This involves identifying a subspace of $\bm{\mathcal{U}}^{\mu}_{1,2,t}$ that effectively represents its elements. Throughout this work, we use the $\widehat{\cdot}$ symbol to denote low-rank approximations, e.g., $\widehat{\bm{U}}^{\mu}_{1,2,t} \approx \bm{U}^{\mu}_{1,2,t}$. The low-rank approximation $\widehat{\bm{U}}^{\mu}_{1,2,t}$ belongs to a reduced subspace spanned by a specific reduced basis, which we represent in a \ac{tt} format. The \ac{tt} format expresses (an approximation of) the entries of a multidimensional tensor as a sequence of three-dimensional \ac{tt} cores. For instance, consider $\bm{U}_{1,2,t,\mu}$. Its \ac{tt} representation is given by
    \begin{equation}
    \label{eq: tt cores}
    \bm{\Phi}_{\widehat{0},1,\widehat{1}} \in \R^{1 \times N_1 \times r_1}, \quad 
    \bm{\Phi}_{\widehat{1},2,\widehat{2}} \in \R^{r_1 \times N_2 \times r_2}, \quad 
    \bm{\Phi}_{\widehat{2},t,\widehat{t}} \in \R^{r_2 \times N_t \times r_t}, \quad
    \bm{\Phi}_{\widehat{t},\mu,\widehat{\mu}} \in \R^{r_t \times N_\mu \times 1},
\end{equation}
with reduced ranks (or reduced dimensions) $r_1$, $r_2$, and $r_t$, respectively. We use the notation $\widehat{i}$ to refer to the axis corresponding to a reduced rank $r_i$. For instance, in \eqref{eq: tt cores}, the subscript $\widehat{1}$ denotes an axis of length $r_1$, and similarly for other indices. As discussed in Sect.~\ref{sec: tt-rb}, the cores in \eqref{eq: tt cores} are obtained by applying a low-rank approximation method within the \ac{tt} framework to $\bm{U}_{1,2,t,\mu}$:
\begin{equation} 
    \label{eq: tensor train naive}
    \bm{U}_{1,2,t,\mu} \approx \widehat{\bm{U}}_{1,2,t,\mu} = \sum\limits_{\alpha_1 = 1}^{r_1} \sum\limits_{\alpha_2 = 1}^{r_2} \sum\limits_{\alpha_t = 1}^{r_t} \bm{\Phi}_{\widehat{0},1,\widehat{1}}\left[1,:,\alpha_1\right] \otimes \bm{\Phi}_{\widehat{1},2,\widehat{2}}\left[\alpha_1,:,\alpha_2\right] \otimes \bm{\Phi}_{\widehat{2},t,\widehat{t}}\left[\alpha_2,:,\alpha_t\right] \otimes \bm{\Phi}_{\widehat{t},\mu,\widehat{\mu}}[\alpha_t, :, 1]. 
\end{equation}
This representation is particularly advantageous for high-dimensional tensors, as the storage requirements for the \ac{tt} decomposition scale linearly with the number of dimensions and quadratically with the ranks. Low-rank algorithms are designed to compute the \ac{tt} cores by evaluating an accuracy measure as a function of the ranks and selecting the smallest ranks that ensure the error remains below a specified threshold. Assuming the input tensor is reducible \cite{quarteroni2015reduced} and the threshold is not excessively small, it is common to have $r_i \ll N_i$. 

Since \eqref{eq: tensor train naive} can be equivalently expressed by omitting the trivial axes, we may simplify the notation by disregarding the subscripts $\widehat{0}$ and $\widehat{t}$. In this interpretation, the first and last cores are treated as matrices rather than three-dimensional arrays.
In order to simplify the notation of \eqref{eq: tensor train naive}, we employ the contraction along a common axis of two multi-dimensional arrays 
\begin{equation*}
    \bm{R}_{a, b, c} \in \R^{N_a \times N_b \times N_c}, \qquad 
    \bm{S}_{c, d, e} \in \R^{N_c \times N_d \times N_e}
\end{equation*}
defined as 
\begin{equation}
    \label{eq: tt product}
    \R^{N_a \times N_b \times N_d \times N_e} \ni \bm{T}_{a, b, d, e} \doteq \bm{R}_{a, b, c} \bm{S}_{c, d, e}, \qquad \bm{T}_{a, b, d, e}\left[\alpha_a,\alpha_b,\alpha_d,\alpha_e\right] = \sum\limits_{\alpha_c} \bm{R}_{a, b, c} \left[\alpha_a,\alpha_b,\alpha_c\right] \bm{S}_{c, d, e} \left[\alpha_c,\alpha_d,\alpha_e\right].
\end{equation}
By applying \eqref{eq: tt product}, we can rewrite \eqref{eq: tensor train naive} in a more compact form as
\begin{equation}
    \label{eq: tensor train}
    \widehat{\bm{U}}_{1,2,t,\mu} = \bm{\Phi}_{\widehat{0},1,\widehat{1}} \bm{\Phi}_{\widehat{1},2,\widehat{2}} \bm{\Phi}_{\widehat{2},t,\widehat{t}} \bm{\Phi}_{\widehat{t},\mu,\widehat{\mu}} \ .
\end{equation}
Occasionally, we employ a matrix-by-tensor multiplication, often referred to as a mode-$k$ contraction. For instance, given 
\begin{equation*}
    \bm{R}_{a, b} \in \R^{N_a \times N_b}, \qquad 
    \bm{S}_{c, b, d} \in \R^{N_c \times N_b \times N_d},
\end{equation*}
we define the mode-$2$ contraction as  
\begin{equation}
    \label{eq: mode contraction}
    \R^{N_a \times N_c \times N_d} \ni \bm{T}_{a, c, d} = \bm{R}_{a, b} \odot_2 \bm{S}_{c, b, d}, \qquad \bm{T}_{a, c, d}\left[\alpha_a,\alpha_c,\alpha_d\right] = \sum\limits_{\alpha_b} \bm{R}_{a, b} \left[\alpha_a,\alpha_b\right] \bm{S}_{c, b, d} \left[\alpha_c,\alpha_b,\alpha_d\right].
\end{equation}
The \ac{tt} decomposition is a particular case of a hierarchical tensor format. In this framework, the \ac{tt} cores introduced in \eqref{eq: tt cores} are used to recursively build a hierarchical basis for the space $\bm{\mathcal{U}}_{12t}$, which can be represented as:
\begin{align}
    \label{eq: hierarchical bases}
    \bm{\mathcal{U}}_{1} = \mathrm{col} \left(\bm{\Phi}_{1,\widehat{1}}\right), \qquad
    \bm{\mathcal{U}}_{12} = \mathrm{col} \left(\bm{\Phi}_{12,\widehat{2}}\right), \qquad
    \bm{\mathcal{U}}_{12t} = \mathrm{col} \left(\bm{\Phi}_{12t,\widehat{t}}\right),
\end{align}
where $\mathrm{col}$ denotes the column space of a matrix.
%Fig.~\ref{fig: tt example} illustrates the intrinsic meaning of \eqref{eq: tensor train naive} through a graphical representation.
\begin{figure}[t]
    \centering 
    \tikzset{every picture/.style={line width=0.75pt}} %set default line width to 0.75pt        

    \begin{tikzpicture}[x=0.6pt,y=0.6pt,yscale=-1,xscale=1]
    %uncomment if require: \path (0,300); %set diagram left start at 0, and has height of 300

    %Shape: Cube [id:dp48582378527177417] 
    \draw  [fill={rgb, 255:red, 182; green, 208; blue, 238 }  ,fill opacity=1 ] (99,212) -- (260,51) -- (311,51) -- (311,72) -- (150,233) -- (99,233) -- cycle ; \draw   (311,51) -- (150,212) -- (99,212) ; \draw   (150,212) -- (150,233) ;
    %Shape: Rectangle [id:dp10805634324251112] 
    \draw  [fill={rgb, 255:red, 177; green, 204; blue, 236 }  ,fill opacity=1 ] (144.44,60) -- (165.43,60) -- (44,152) -- (23.01,152) -- cycle ;
    %Shape: Rectangle [id:dp5707249368477366] 
    \draw  [color={rgb, 255:red, 0; green, 0; blue, 0 }  ,draw opacity=1 ][fill={rgb, 255:red, 189; green, 213; blue, 240 }  ,fill opacity=1 ] (302,172.1) -- (379,95.8) -- (379,144.62) -- (302,220.91) -- cycle ;
    %Straight Lines [id:da06640566556096228] 
    \draw  [dash pattern={on 0.84pt off 2.51pt}]  (99,233) -- (260,73) ;
    %Straight Lines [id:da78413683493818] 
    \draw  [dash pattern={on 0.84pt off 2.51pt}]  (260,73) -- (260,51) ;
    %Straight Lines [id:da9202473730590931] 
    \draw  [dash pattern={on 0.84pt off 2.51pt}]  (260,73) -- (311,72) ;
    %Shape: Rectangle #1
    \draw  [color=red  ,draw opacity=1 ][fill=red  ,fill opacity=1 ] (77,111.25) -- (97,111.25) -- (91,116.25) -- (70,116.25) -- cycle ; 
    \draw  [color=red  ,draw opacity=1 ][fill=red  ,fill opacity=1 ] (180,131.25) -- (230,131.25) -- (224.11,136.25) -- (175,136.25) -- cycle ;
    \draw  [color=red  ,draw opacity=1 ][dash pattern={on 1.69pt off 2.76pt}] (180,152) -- (230,152) -- (224.11,156.25) -- (175,156.25) -- cycle ;
    %Shape: Rectangle #2
    \draw  [color=red  ,draw opacity=1 ][fill=red  ,fill opacity=1 ] (230,131.25) -- (230,152) -- (225,158) -- (225,134.49) -- cycle ;
    \draw  [color=red  ,draw opacity=1 ][dash pattern={on 1.69pt off 2.76pt}] (180,131.25) -- (180,152) -- (175,157) -- (175,135) -- cycle ;
    \draw  [color=red  ,draw opacity=1 ][fill=red  ,fill opacity=1 ] (379,96) -- (379,145) -- (374,149) -- (374,101.18) -- cycle ;
    %Straight Lines [id:da5663238313970959] 
    \draw    (50.5,161) -- (174.92,65.22) ;
    \draw [shift={(176.5,64)}, rotate = 142.41] [color={rgb, 255:red, 0; green, 0; blue, 0 }  ][line width=0.75]    (10.93,-3.29) .. controls (6.95,-1.4) and (3.31,-0.3) .. (0,0) .. controls (3.31,0.3) and (6.95,1.4) .. (10.93,3.29)   ;
    %Straight Lines [id:da12529459182214975] 
    \draw    (50.5,161) -- (19.5,161) ;
    \draw [shift={(17.5,161)}, rotate = 1.74] [color={rgb, 255:red, 0; green, 0; blue, 0 }  ][line width=0.75]    (10.93,-3.29) .. controls (6.95,-1.4) and (3.31,-0.3) .. (0,0) .. controls (3.31,0.3) and (6.95,1.4) .. (10.93,3.29)   ;
    %Straight Lines [id:da4071258265661615] 
    \draw    (164.5,244) -- (322,84.42) ;
    \draw [shift={(323.5,83)}, rotate = 134.64] [color={rgb, 255:red, 0; green, 0; blue, 0 }  ][line width=0.75]    (10.93,-3.29) .. controls (6.95,-1.4) and (3.31,-0.3) .. (0,0) .. controls (3.31,0.3) and (6.95,1.4) .. (10.93,3.29)   ;
    %Straight Lines [id:da6155127221262707] 
    \draw    (164.5,244) -- (106.5,244) ;
    \draw [shift={(104.5,244)}, rotate = 360] [color={rgb, 255:red, 0; green, 0; blue, 0 }  ][line width=0.75]    (10.93,-3.29) .. controls (6.95,-1.4) and (3.31,-0.3) .. (0,0) .. controls (3.31,0.3) and (6.95,1.4) .. (10.93,3.29)   ;
    %Straight Lines [id:da7200440921232011] 
    \draw    (164.5,244) -- (164.5,215) ;
    \draw [shift={(164.5,213)}, rotate = 90] [color={rgb, 255:red, 0; green, 0; blue, 0 }  ][line width=0.75]    (10.93,-3.29) .. controls (6.95,-1.4) and (3.31,-0.3) .. (0,0) .. controls (3.31,0.3) and (6.95,1.4) .. (10.93,3.29)   ;
    %Straight Lines [id:da3201975044125943] 
    \draw    (292.5,244) -- (293,184) ;
    \draw [shift={(293.5,182)}, rotate = 90.92] [color={rgb, 255:red, 0; green, 0; blue, 0 }  ][line width=0.75]    (10.93,-3.29) .. controls (6.95,-1.4) and (3.31,-0.3) .. (0,0) .. controls (3.31,0.3) and (6.95,1.4) .. (10.93,3.29)   ;
    %Straight Lines [id:da6096507078498852] 
    \draw    (293,244) -- (379.09,157.41) ;
    \draw [shift={(380.5,156)}, rotate = 135] [color={rgb, 255:red, 0; green, 0; blue, 0 }  ][line width=0.75]    (10.93,-3.29) .. controls (6.95,-1.4) and (3.31,-0.3) .. (0,0) .. controls (3.31,0.3) and (6.95,1.4) .. (10.93,3.29)   ;

    % Text Node
    \draw (10,60) node [anchor=north west][inner sep=0.75pt][font=\small]   [align=left] {$\bm{\Phi}^{\mu_{\star}}_{\widehat{0},1,\widehat{1}}\left[:,\textcolor{red}{i_1},:\right]$};

    % Text Node
    \draw (235,22) node [anchor=north west][inner sep=0.75pt][font=\small]   [align=left] {$\bm{\Phi}^{\mu_{\star}}_{\widehat{1},2,\widehat{2}}\left[:,\textcolor{red}{i_2},:\right]$};

    % Text Node
    \draw (350,65) node [anchor=north west][inner sep=0.75pt][font=\small]   [align=left] {$\bm{\Phi}^{\mu_{\star}}_{\widehat{2},t,\widehat{t}}\left[:,\textcolor{red}{i_t},:\right]$};

    % Text Node
    \draw (16,164) node [anchor=north west][inner sep=0.75pt][font=\scriptsize]   [align=left] {$\widehat{1}$};
    % Text Node
    \draw (178,64) node [anchor=north west][inner sep=0.75pt][font=\scriptsize]   [align=left] {$1$}; 
    % Text Node
    \draw (104.5,246) node [anchor=north west][inner sep=0.75pt][font=\scriptsize]   [align=left] {$\widehat{2}$};
    % Text Node
    \draw (169,211) node [anchor=north west][inner sep=0.75pt][font=\scriptsize]   [align=left] {$\widehat{1}$};
    % Text Node
    \draw (323.5,83) node [anchor=north west][inner sep=0.75pt][font=\scriptsize]   [align=left] {$2$};
    % Text Node
    \draw (280,182) node [anchor=north west][inner sep=0.75pt][font=\scriptsize]   [align=left] {$\widehat{2}$};
    % Text Node
    \draw (380.5,156) node [anchor=north west][inner sep=0.75pt][font=\scriptsize]   [align=left] {$t$};

    \end{tikzpicture}
    \caption{Illustration of how the entry $\bm{U}_{1,2,t}^{\mu_{\star}}\left[i_1,i_2,i_t\right]$ is approximated using the \ac{tt} cores. First, the vector-matrix product is performed between the first and second cores, evaluated at the specified indices. Next, the resulting row vector is multiplied by the column vector from the third core, also evaluated at the corresponding index. The resulting scalar represents the approximated tensor entry.}
    \label{fig: tt example}
\end{figure}

To approximate $\bm{U}_{12t}^{\mu_{\star}}$ in the \ac{tt} format for a given parameter $\bm{\mu} = \bm{\mu}_{\star}$, the following steps are performed:
\begin{itemize}
    \item Construct the snapshot tensor $\bm{U}_{1,2,t,\mu}$ for $N_{\mu}$ parameter samples. Note that the superscript $\mu$ is omitted here, as the sampled parameters are assumed to sufficiently cover the parameter space.
    \item Perform a \ac{tt} decomposition of $\bm{U}_{1,2,t,\mu}$ to compute the cores $\bm{\Phi}_{\widehat{0},1,\widehat{1}}, \bm{\Phi}_{\widehat{1},2,\widehat{2}}, \bm{\Phi}_{\widehat{2},t,\widehat{t}}$, which span the \ac{rb} subspace $\bm{\mathcal{U}}_{12t}^{\mu_{\star}}$. Note that the parametric core $\bm{\Phi}_{\widehat{t},\mu,\widehat{\mu}}$ is not computed. The dimension of the \ac{rb} space corresponds to the temporal rank $r_t$, which is no longer necessarily equal to $1$.
    \item Determine $\widehat{\bm{U}}_{\widehat{t}}^{\mu_{\star}}$ such that 
\end{itemize}
\begin{equation}
    \label{eq: tt approximation}
    \bm{U}_{1,2,t}^{\mu_{\star}} \approx \widehat{\bm{U}}_{1,2,t}^{\mu_{\star}} = \bm{\Phi}_{\widehat{0},1,\widehat{1}} \bm{\Phi}_{\widehat{1},2,\widehat{2}} \bm{\Phi}_{\widehat{2},t,\widehat{t}} \ \widehat{\bm{U}}_{\widehat{t}}^{\mu_{\star}},
\end{equation}
As discussed in Sect.~\ref{sec: tt-rb}, $\widehat{\bm{U}}_{\widehat{t}}^{\mu_{\star}}$ represents the vector of coordinates of $\bm{U}_{1,2,t}^{\mu_{\star}}$ in the \ac{tt} basis. This vector is the unknown in the \ac{ttrb} method.

Finally, we introduce the following ``multi-axes'' notation to describe tensors and their dimensions when the value of $d$ is not explicitly specified:
\begin{equation}
    \label{eq: multi-axes notation}
    \bm{U}_{1,\hdots,\mu} \in \R^{N_{1,\hdots,\mu}} = \bm{U}_{1,2,\hdots,\mu} \in \R^{N_{1,2,\hdots,\mu}} = \hdots = \bm{U}_{1,2,\hdots,d,t,\mu} \in \R^{N_{1,2,\hdots,d,t,\mu}},
\end{equation}
where
\begin{equation*}
    N_{1,\hdots,\mu} = N_{1,2,\hdots,\mu} = \hdots = N_{1,2,\hdots,d,t,\mu} = N_1 \times N_2 \times \cdots \times N_d \times N_t \times N_{\mu}.
\end{equation*}
When merging multiple axes, on the other hand, we employ 
\begin{equation}
    \label{eq: multi-axes vec notation}
    \bm{U}_{1:\mu} \in \R^{N_{1:\mu}} = \bm{U}_{12:\mu} \in \R^{N_{12:\mu}} = \hdots = \bm{U}_{12\cdots d t:\mu} \in \R^{N_{12\cdots d t:\mu}},
\end{equation}
where 
\begin{equation*}
    N_{1:\mu} = N_{12:\mu} = N_{12\cdots d\cdot t:\mu} = N_1 \cdot N_2 \cdots N_d \cdot N_t \cdot N_{\mu} = N_{s t \mu}.
\end{equation*}
Naturally, we can combine the notations introduced in \eqref{eq: multi-axes notation} and \eqref{eq: multi-axes vec notation}, e.g.,
\begin{equation*}
    \bm{U}_{1:i,\hdots,\mu} \in \R^{N_{1:i,\hdots,\mu}}
\end{equation*}
represents a tensor with $d-i+3$ dimensions. Additionally, the temporal axis is often referred to as the $(d+1)$-th axis for clarity. 
\begin{remark}
    For vector-valued problems, the snapshots must be expressed in the ``split-axes'' format by introducing an additional axis for the components:
    \begin{equation}
      \label{eq: vector value tt}
      \bm{U}_{1,\dots,d,c,t,\mu} \in \R^{N_1 \times \cdots \times N_d \times N_c \times N_t \times N_{\mu}},
    \end{equation} 
    where the subscript $c$ represents the components of the vector field, and $N_c$ denotes the number of components (typically $N_c = d$ for most problems). In this case, the \ac{tt} decomposition includes an additional three-dimensional core to account for the components axis.
\end{remark}
\section{Reduced basis method in space time}
\label{sec: rb}

We begin this section by introducing the \ac{fom} given by a transient, parameterized \ac{pde} on a $d$-cube. Then, we provide an overview of the \ac{strb} method applied to the \ac{fom}.

\subsection{Full order model} 
\label{subs:full order model}
We consider a $d$-cube
\begin{equation*}
    \Omega = \Omega_1 \times \cdots \times \Omega_d \quad \text{where} \quad \Omega_i \subset \R \ \forall \ i = 1,\hdots, d,
\end{equation*}
with boundary $\partial\Omega$. For transient problems, we also introduce a temporal domain $[0,T] \subset \R_+ \cup \{0\}$ and a parameter space $\mathcal{D} \subset \R^p$. For a given parameter $\bm{\mu} \in \mathcal{D}$, we consider a generic parameterized \ac{pde} defined over the space-time domain $\Omega \times [0,T]$, which takes the form:
\begin{equation}
    \begin{split}
        \label{eq: strong form pde}
        \frac{\partial u^{\mu}}{\partial t} + \mathcal{A}^{\mu} (u^{\mu}) &= f^{\mu} \quad \text{in } \Omega \times (0,T], \\
        % u^{\mu} = g^{\mu}  & \text{on} \ \Gamma_D \times (0,T], \\
        % \underline{n} \cdot \bm{\nabla} u^{\mu}  = h^{\mu} & \text{on} \ \Gamma_N \times (0,T], \\
        u^{\mu} &= u_0^{\mu} \quad \text{in } \Omega \times \{0\},
    \end{split} 
\end{equation}
with appropriate boundary conditions on $\partial \Omega$. Here, $u^{\mu}: \Omega \times [0,T] \to \R$ represents the unknown state variable, $f^{\mu}: \Omega \times [0,T] \to \R$ is the forcing term, and $u_0^{\mu}: \Omega \to \R$ specifies the initial condition. The operator $\mathcal{A}^{\mu}: \R \to \R$ is a linear differential operator, whose explicit form is left unspecified. We also define the Dirichlet and Neumann boundaries, $\Gamma_D$ and $\Gamma_N$, respectively, such that $\{\Gamma_D, \Gamma_N\}$ partitions $\partial\Omega$. For simplicity, we assume that $\Gamma_D$ consists of entire legs of the $d$-cube, meaning that each element of $\Gamma_D$ corresponds to a complete $(d-1)$-dimensional facet of $\partial \Omega$. This assumption is made solely to facilitate the analysis of the \ac{fom} with strongly imposed Dirichlet conditions and does not entail a loss of generality. If this condition is not satisfied, the Dirichlet data can instead be imposed weakly, for instance, using a Nitsche penalty method \cite{Hansbo,BurmanHansbo}.

We now introduce a conforming, quasi-uniform quadrilateral partition of $\Omega$, denoted by $\mathcal{T}_h$, along with a uniform temporal partition of $\left[0,T\right]$, denoted by $\{t_n\}_{n=0}^{N_t}$. The spatial mesh $\mathcal{T}_h$ is characterized by a parameter $h$ representing its size, while the temporal mesh is defined such that $t_n = n \delta$, where $\delta = T/N_t$ is the time-step size. Notably, $\mathcal{T}_h$ can be expressed as the tensor product of one-dimensional partitions defined on $\Omega_1, \hdots, \Omega_d$, a property that will be extensively utilized throughout this work. For the spatial discretization of \eqref{eq: strong form pde}, we consider the following Hilbert spaces:
\begin{equation}
    \mathcal{V} = H^1(\Omega); \qquad \mathcal{V}^0_{\Gamma_D}  
    = \left\{ v \in H^1(\Omega) \ \ : \   v=0 \  \text{on} \ \Gamma_D\right\},
\end{equation}
and their finite-dimensional counterparts 
\begin{equation*}
    \mathcal{V}_h = \{v_h \in C^0(\Omega) \ : \ v_h|_K \in \mathcal{P}_p(K) \ \forall \ K \in \mathcal{T}_h, \ p \geq 1 \} \subset \mathcal{V}, \qquad 
    \mathcal{V}_h^0 = \mathcal{V}_h \cap \mathcal{V}^0_{\Gamma_D}.
\end{equation*}
Here, $\mathcal{P}_p$ represents the space of polynomials of degree at most $p$. For temporal discretization, we adopt the \ac{be} scheme for clarity of exposition, although a Crank-Nicolson method is employed in our numerical experiments. In this work, Dirichlet boundary conditions are enforced strongly. By formulating the lifted weak form of \eqref{eq: strong form pde} and applying numerical integration, the \ac{fom} can be expressed algebraically as
\begin{align}
\label{eq: theta method}
    \left(\delta^{-1}\bm{M}_{s,s}+ \bm{A}_{s,s}^{\mu}(t_n)\right)\left(\bm{U}^{\mu}_s\right)_n = 
    \bm{L}_s^{\mu}(t_n) +  \delta^{-1}\bm{M}_{s,s} \left(\bm{U}^{\mu}_s\right)_{n-1} , \quad n \in \N(N_t), 
\end{align}
where $\N(k) = \{1,\hdots,k\}$ for any positive integer $k$. The vector $\left(\bm{U}^{\mu}_s\right)_n \in \R^{N_s}$ represents the \acp{dof} of the \ac{fe} approximation $\left(u_h^{\mu}\right)_n \in \mathcal{V}_h^0$ at the time instant $t_n$, with homogeneous Dirichlet boundary conditions. Due to the definitions of $\Omega$ and $\mathcal{T}_h$, the \ac{fe} basis functions spanning $\mathcal{V}_h$ (and consequently $\mathcal{V}_h^0$, given the assumptions on $\Gamma_D$) exhibit a tensor product structure. Specifically, these basis functions can be expressed as the tensor product of \ac{fe} basis functions defined on the one-dimensional partitions that collectively form $\mathcal{T}_h$ \cite{LeDret2016}. This tensor product structure allows us to identify $\left(\bm{U}^{\mu}_s\right)_n$ with $(\bm{U}^{\mu}_{1,\hdots,d})_n \in \R^{N_1 \times \cdots \times N_d}$. In \eqref{eq: theta method}, the symbols $\bm{M}_{s,s}$, $\bm{A}_{s,s}^{\mu}$, and $\bm{L}_s^{\mu}$ denote the mass matrix (independent of $t$ and $\bm{\mu}$ in this work), the stiffness matrix, and the \ac{rhs} vector, respectively. The \ac{rhs} vector can also be represented as a $d$-dimensional tensor, while the mass and stiffness matrices can be viewed as 2-$d$ tensors. Notably, $\bm{M}_{s,s}$ and $\bm{A}^{\mu}_{s,s}$ are sparse matrices and can therefore be equivalently represented by their vectors of nonzero entries, $\bm{M}_z$ and $\bm{A}^{\mu}_z \in \R^{N_z}$, where $N_z$ denotes the number of nonzero entries.
% Note that, thanks to the assumptions on $\Omega$ and $\Gamma_D$, the \ac{fe} basis functions spanning $\mathcal{V}_h$ and $\mathcal{V}_h^0$ can be written as the tensor product of \ac{fe} basis functions defined on 1-d domains \cite{LeDret2016}.
Writing \eqref{eq: theta method} at every time step yields the space-time algebraic system
\begin{align}
    \label{eq: space time FOM}
    \bm{K}^{\mu}_{st,st}\bm{U}^{\mu}_{st}=\bm{L}^{\mu}_{st}.
\end{align}
In this formulation, $\bm{K}^{\mu}_{st,st}$ is a block bi-diagonal matrix with $N_t$ diagonal blocks of the form $\delta^{-1} \bm{M}_{s,s} + \bm{A}_{s,s}^{\mu}(t_i)$ and lower diagonal blocks given by $\delta^{-1} \bm{M}_{s,s}$. The space-time vectors $\bm{U}^{\mu}_{st}$ and $\bm{L}^{\mu}_{st}$ are constructed by vertically concatenating their spatial components across all time steps. Additionally, we define the discrete Laplacian matrix $\bm{X}_{s,s}$, which is symmetric, positive definite, and represents the $H^1_0$ inner product on $\mathcal{V}_h^0$. On a Cartesian mesh, this matrix exhibits a structured sparsity pattern.
\begin{equation}
    \label{eq: H1 norm matrix}
    \begin{split}
        \bm{X}_{s,s} &= \bm{X}_{1,1} \otimes \bm{M}_{2,2} \otimes \cdots \otimes \bm{M}_{d,d} + \bm{M}_{1,1} \otimes \bm{X}_{2,2} \otimes \cdots \otimes \bm{M}_{d,d}
        + \hdots +
        \bm{M}_{1,1} \otimes \bm{M}_{2,2} \otimes \cdots \otimes \bm{X}_{d,d} \\ 
        &= \sum\limits_{k=1}^d \overset{d}{\underset{i=1}{\otimes}} \bm{Y}_{i,i}^k, \quad \text{where} \quad
        \bm{Y}_{i,i}^k = \bm{X}_{i,i} \quad \text{if} \quad i=k, \quad \bm{Y}_{i,i}^k = \bm{M}_{i,i} \quad \text{otherwise}.
    \end{split}
\end{equation}
Here, $\bm{X}_{i,i}$ and $\bm{M}_{i,i}$ denote the discrete Laplacian and mass matrices associated with the $i$th one-dimensional \ac{fe} space, respectively. Recall that $\mathcal{V}_h$ and $\mathcal{V}_h^0$ are constructed with a tensor product structure. Consequently, $\bm{X}_{s,s}$ can be interpreted as a $d$-rank tensor. Additionally, we define the global spatio-temporal norm matrix $\bm{X}_{st,st}$ as a block-diagonal matrix with $N_t$ blocks, each given by $\delta \bm{X}_{s,s}$. The factor $\delta$ accounts for the $L^2(0,T;\mathcal{V})$ inner product of the \ac{fe} basis functions.

\subsection{Space-time reduced-basis method}
\label{subs: st-rb}
The \ac{strb} method is a data-driven approach that involves two main stages:
\begin{enumerate}
  \item A computationally intensive \emph{offline phase}, during which the spatio-temporal basis is constructed, and the (Petrov-) Galerkin projection of the \ac{fom} \eqref{eq: space time FOM} is precomputed.
  \item A computationally efficient \emph{online phase}, where the \ac{rb} approximation is rapidly evaluated for any given parameter $\bm{\mu}$.
\end{enumerate}
We define two distinct sets of parameters: the offline parameter set, $\mathcal{D}_{\mathrm{off}}$, and the online parameter set, $\mathcal{D}_{\mathrm{on}}$. These sets are given by 
\begin{equation}
    \label{eq: disjoint params}
    \mathcal{D}_{\mathrm{off}} = \{\bm{\alpha}_k\}_{k=1}^{N_{\mu}} \subset \mathcal{D}; \qquad 
    \mathcal{D}_{\mathrm{on}} = \{\bm{\beta}_k\}_{k=1}^{N_{\mathrm{on}}} \subset \mathcal{D}.
\end{equation}
To construct the (offline) \ac{fom} snapshots $\bm{U}_{s,t,\mu}$, we solve and store the solution of \eqref{eq: space time FOM} for each parameter $\bm{\mu}_k \in \mathcal{D}_{\mathrm{off}}$. Unlike the ``split-axes'' representation of snapshots, a standard \ac{strb} method directly uses these snapshots. From the computed snapshots, we derive an $\bm{X}_{s,s}$-orthogonal spatial basis and an $\ell^2$-orthogonal temporal basis. The space-time basis, $\bm{X}_{st,st}$-orthogonal, is then constructed as the Kronecker product of these two bases. This space-time basis defines the projection subspace used in the \ac{strb} method. The entire procedure, along with the computational cost of each step, is detailed in Alg. \ref{alg: tpod}. 
\begin{algorithm}[t]
    \caption{\texttt{TPOD}: Construct the $\bm{X}_{s,s}$-orthogonal spatial basis $\bm{\Phi}_{s,\widehat{s}}$ and the $\ell^2$-orthogonal temporal basis $\bm{\Phi}_{t,\widehat{t}}$ from the tensor of space-time snapshots $\bm{U}_{s,t,\mu}$, given a prescribed accuracy $\varepsilon$ and the norm matrix $\bm{X}_{s,s}$.}
    \begin{algorithmic}[1]
        \Function{\texttt{TPOD}}{$\bm{U}_{s,t,\mu}, \bm{X}_{s,s}, \varepsilon $}
        \State Cholesky factorization: $\bm{H}^T_{s,s}\bm{H}_{s,s} = \texttt{Cholesky}\left(\bm{X}_{s,s} \right)$ \Comment{$\mathcal{O}\left(N_s b^2\right)$}
        \State Spatial rescaling: $\widetilde{\bm{U}}_{s,t\mu} = \bm{H}_{s,s}\bm{U}_{s,t\mu}$ \Comment{$\mathcal{O}\left(N_z N_t N_{\mu}\right)$}
        \State Spatial reduction: $\widetilde{\bm{\Phi}}_{s,\widehat{s}},\widetilde{\bm{R}}_{\widehat{s},t\mu} = \texttt{RSVD}(\widetilde{\bm{U}}_{s,t\mu},\varepsilon)$ \Comment{$\mathcal{O}\left(N_sN_t N_{\mu} \log{(N_t N_{\mu})}\right)$}
        \State Spatial inverse rescaling: $\bm{\Phi}_{s,\widehat{s}} = \bm{H}_{s,s}^{-1} \widetilde{\bm{\Phi}}_{s,\widehat{s}}$ \Comment{$\mathcal{O}\left(r_sN_s^2\right)$}
        \State Spatial contraction: $\widehat{\bm{U}}_{\widehat{s} ,t\mu} = \widetilde{\bm{\Phi}}_{\widehat{s},s} \widetilde{\bm{U}}_{s,t\mu}$ \Comment{$\mathcal{O}\left(r_s N_s N_t N_{\mu}\right)$}
        \State Temporal reduction: $\bm{\Phi}_{t,\widehat{t}},\bm{R}_{\widehat{t},\widehat{s}\mu} = \texttt{RSVD}(\widehat{\bm{U}}_{t,\widehat{s}\mu},\varepsilon)$ \Comment{$\mathcal{O}\left(r_sN_tN_{\mu}\log{\min\{r_sN_{\mu},N_t\}}\right)$}
        \State Return $\bm{\Phi}_{s,\widehat{s}}$, $\bm{\Phi}_{t,\widehat{t}}$
        \EndFunction
    \end{algorithmic}
    \label{alg: tpod}
\end{algorithm} 
The two $\texttt{RSVD}$ functions are used to compute the $r_s$-dimensional spatial subspace and the $r_t$-dimensional temporal subspace, respectively. The \ac{rsvd}, originally introduced in \cite{halko2010findingstructurerandomnessprobabilistic}, is designed to reduce the computational cost of the traditional \ac{svd}. Assuming $N_s > N_tN_{\mu}$, the computational complexity of a full \ac{rsvd} is given by: 
\begin{equation}
    \label{eq: cost RSVD}
      \texttt{RSVD}(\bm{U}_{s,t\mu}) \sim \mathcal{O}(N_s N_t N_{\mu} \log{(N_t N_{\mu})}).
    \end{equation}
    The ranks $r_s$ and $r_t$ are determined using the relative energy criterion, as described in \cite{quarteroni2015reduced}. For further details on the practical implementation of \texttt{RSVD}, we refer the reader to \cite{ken_ho_2022_6394438}. Additionally, we note that the quantity
\begin{equation}
    \widetilde{\bm{R}}_{\widehat{s},t\mu} = \bm{\Sigma}_{\widehat{s},\widehat{s}} \bm{V}^T_{\widehat{s},t \mu},
\end{equation}
represents the residual of the spatial compression. Specifically, the matrix $\bm{\Sigma}_{\widehat{s},\widehat{s}}$ contains the singular values, while $\bm{V}^T_{\widehat{s},t \mu}$ holds the right singular vectors of $\bm{U}_{s,t\mu}$. Similarly, $\widetilde{\bm{R}}_{\widehat{t},\widehat{s}\mu}$ represents the residual of the temporal compression. These residuals play a crucial role, particularly when employing \ac{ttsvd}, as will be discussed in the next section. The cost estimates in Alg. \ref{alg: tpod} are derived based on \eqref{eq: cost RSVD} and the following observations:
\begin{itemize}
    \item The Cholesky factorization of an $N_s \times N_s$ sparse matrix has a computational complexity of $\mathcal{O}\left(N_s b^2\right)$ \cite{doi:https://doi.org/10.1002/0471249718.ch1}, where $b$ denotes the semi-bandwidth of the matrix. Further details regarding $b$ are provided in Sect. \ref{sec: tt-rb}.
    \item The Cholesky factor $\bm{H}_{s,s}$ is sparse, with approximately $N_z$ nonzero entries. Consequently, the cost of the spatial rescaling $\bm{H}_{s,s}\bm{U}_{s,t\mu}$ is $\mathcal{O}\left(N_z N_t N_{\mu}\right)$. On a Cartesian mesh, the number of nonzero entries is given by 
      \begin{equation*}
        N_z = \mathcal{O}\left((pd)^d N_s\right),
      \end{equation*}
      where $p$ is the polynomial order of the \ac{fe} space. Thus, the complexity can also be expressed as $\mathcal{O}\left((pd)^d N_s N_t N_{\mu}\right)$.
    \item Since $\bm{H}_{s,s}$ is sparse, the inverse rescaling $\bm{H}_{s,s}^{-1} \widetilde{\bm{\Phi}}_{s,\widehat{s}}$ generally requires fewer operations than the worst-case scenario of $\mathcal{O}(r_s N_s^2)$, which would occur if $\bm{H}_{s,s}$ were a full triangular matrix. However, due to the lack of a general expression for the bandwidth of $\bm{H}_{s,s}$, we conservatively assume the worst-case cost. This assumption does not affect the overall computational cost analysis, as this step is computationally negligible.
    \item The spatial contraction step corresponds to the \ac{sthosvd} approach \cite{carlberg2017galerkin}, which reduces the cost of temporal compression. The computational cost of \ac{sthosvd} is dominated by the spatial compression step, which involves two matrix-matrix multiplications. In comparison, the subsequent \ac{rsvd} step is negligible. Alternatively, the temporal basis could be computed directly from $\widetilde{\bm{R}}_{\widehat{s},t\mu}$, bypassing the spatial contraction step. This approach essentially corresponds to the Tucker decomposition of $\widetilde{\bm{U}}_{s,t,\mu}$. However, both strategies yield similar results and do not impact the overall computational cost of \ac{tpod}, so this remains a minor implementation detail.
\end{itemize}
As outlined earlier, the \ac{strb} method employs a space-time basis defined as 
\begin{equation*}
    \R^{N_{st} \times r_{st}} \ni \bm{\Phi}_{st,\widehat{st}} = \bm{\Phi}_{s,\widehat{s}} \otimes \bm{\Phi}_{t,\widehat{t}},
\end{equation*}
where $r_{st} = r_s r_t$ denotes the dimension of the reduced subspace, indexed by the subscript $\widehat{st}$. In \ac{tpod}, this dimension is the product of the spatial and temporal subspace dimensions. The accuracy of the space-time basis is given by:
\begin{equation}
    \label{eq: TPOD}
    \sum\limits_{j=1}^{N_{\mu}} \|\left(\bm{U}_{st,\mu} -  \bm{\Phi}_{st, \widehat{st}} \bm{\Phi}_{\widehat{st}, st} \bm{X}_{st,st} \bm{U}_{st,\mu}\right)\left[:,j\right]\|^2_{\bm{X}_{st,st}} \leq 
    \varepsilon^2 \left(\|\bm{U}_{s,t\mu}\|_F^2 + \|\widehat{\bm{U}}_{t,\widehat{s}\mu}\|_F^2\right).
\end{equation}
The term $\widehat{\bm{U}}_{t,\widehat{s}\mu}$ in the estimate corresponds to the spatial contraction computed during the \ac{sthosvd} procedure. The result in \eqref{eq: TPOD} is derived by combining the findings from \cite{MUELLER2024115767}, which address the case of an $\ell^2$-orthogonal basis, with the results from \cite{quarteroni2015reduced,negri2015reduced}, where the relationship between an $\ell^2$-orthogonal basis and an $\bm{X}_{st,st}$-orthogonal basis is established.

We now describe the online phase of the \ac{strb} method, where the reduced version of \eqref{eq: space time FOM} is assembled and solved. Specifically, we consider the reduced equations obtained by projecting \eqref{eq: space time FOM} onto the subspace spanned by $\bm{\Phi}_{st,\widehat{st}}$ using a Galerkin projection. For a comprehensive review of the more general Petrov-Galerkin projections in the context of \ac{rb} methods, we refer the reader to \cite{dalsanto2019algebraic}. Using algebraic notation, the \ac{strb} problem is formulated as:
\begin{equation}
    \label{eq: space time reduced problem}
    \text{find} \quad \widehat{\bm{U}}_{\widehat{st}}^{\mu} \quad \text{such that} \quad 
    \bm{\Phi}_{\widehat{st},st} \left(\bm{L}^{\mu}_{st} - \bm{K}^{\mu}_{st,st}\bm{\Phi}_{st, \widehat{st}}\widehat{\bm{U}}_{\widehat{st}}^{\mu}\right)  = \zero{\widehat{st}} \Longleftrightarrow \widehat{\bm{K}}_{\widehat{st},\widehat{st}}^{\mu}\widehat{\bm{U}}_{\widehat{st}}^{\mu} = \widehat{\bm{L}}_{\widehat{st}}^{\mu},
\end{equation}
where 
\begin{equation*}
    \widehat{\bm{K}}_{\widehat{st},\widehat{st}}^{\mu} = \bm{\Phi}_{\widehat{st}, st} \bm{K}^{\mu}_{st,st}\bm{\Phi}_{st, \widehat{st}}, \qquad 
    \widehat{\bm{L}}_{\widehat{st}}^{\mu} = \bm{\Phi}_{\widehat{st}, st} \bm{L}^{\mu}_{st}
\end{equation*}
represent the Galerkin projections of the space-time Jacobian and residual, respectively. Since computing these quantities involves operations that scale with the full-order dimensions, employing a hyper-reduction strategy to approximate the Jacobians and residuals is essential for efficiency.
\begin{remark}
    Although this work focuses exclusively on linear problems, we use the terms Jacobian and residual to refer to the \ac{lhs} and \ac{rhs} of the problem, respectively, for the sake of generality. Notably, the described hyper-reduction techniques are equally applicable to both linear and nonlinear problems without requiring any modifications.
\end{remark}
To achieve hyper-reduction, methods like \ac{mdeim} \cite{negri2015reduced,MUELLER2024115767} aim to construct the following affine expansions:
\begin{equation}
    \label{eq: affine expansions}
    \bm{K}^{\mu}_{st,st} \approx \sum_{i=1}^{r_{st}^{\bm{K}}} \bm{\Phi}^{\bm{K}}_{st,\widehat{st}^{\bm{K}},st}[:,i,:] \widehat{\bm{K}}^{\mu}_{\widehat{st}^{\bm{K}}}[i]; \qquad 
    \bm{L}^{\mu}_{st} \approx \sum_{i=1}^{r_{st}^{\bm{L}}}\bm{\Phi}^{\bm{L}}_{st,\widehat{st}^{\bm{L}}}[:,i] \widehat{\bm{L}}^{\mu}_{\widehat{st}^{\bm{L}}}[i].
\end{equation}  
Here, 
\begin{equation*}
    \bm{\Phi}^{\bm{K}}_{st,\widehat{st}^{\bm{K}},st} \in \R^{N_{st} \times r_{st}^{\bm{K}} \times N_{st}}, \qquad 
    \bm{\Phi}^{\bm{L}}_{st,\widehat{st}^{\bm{K}}} \in \R^{N_{st} \times r_{st}^{\bm{L}}}
\end{equation*}
represent two bases that span reduced-dimensional subspaces, used to approximate the manifold of parameterized Jacobians and residuals. The goal is to solve the approximated \ac{rom} by substituting the affine expansions \eqref{eq: affine expansions} into \eqref{eq: space time reduced problem}:
\begin{equation}
    \label{eq: space time mdeim reduced problem}
    \text{find} \quad \widehat{\bm{U}}_{\widehat{st}}^{\mu} \quad \text{such that} \quad \widebar{\bm{K}}_{\widehat{st},\widehat{st}}^{\mu}\widehat{\bm{U}}_{\widehat{st}}^{\mu} = \widebar{\bm{L}}_{\widehat{st}}^{\mu},
\end{equation}
where
\begin{equation*}
    \widehat{\bm{K}}^{\mu}_{\widehat{st},\widehat{st}} \approx \widebar{\bm{K}}^{\mu}_{st,st} = \sum_{i=1}^{r_{st}^{\bm{K}}}\bm{\Phi}_{\widehat{st},st} \bm{\Phi}^{\bm{K}}_{st,\widehat{st}^{\bm{K}},st}\left[:,i,:\right] \bm{\Phi}_{st,\widehat{st}} \widehat{\bm{K}}^{\mu}_{\widehat{st}^{\bm{K}}}\left[i\right]; \qquad 
    \widehat{\bm{L}}^{\mu}_{\widehat{st}} \approx \widebar{\bm{L}}^{\mu}_{st} = \sum_{i=1}^{r_{st}^{\bm{L}}}\bm{\Phi}_{\widehat{st},st} \bm{\Phi}^{\bm{L}}_{st,\widehat{st}^{\bm{L}}}\left[:,i\right] \widehat{\bm{L}}^{\mu}_{\widehat{st}^{\bm{L}}}\left[i\right].
\end{equation*}
Since the bases are $\bm{\mu}$-independent, most of the Galerkin projection computations can be performed offline. During the online phase, it suffices to compute the reduced coefficients $\widehat{\bm{K}}_{\widehat{st}^{\bm{K}}}$ and $\widehat{\bm{L}}_{\widehat{st}^{\bm{L}}}$, followed by their respective multiplications with the projected bases. These operations depend only on the reduced dimensions $r_{st}$, $r_{st}^{\bm{K}}$, and $r_{st}^{\bm{L}}$, making them independent of the \ac{fom} dimensions. This ensures the computational efficiency typically associated with \acp{rom}. In this work, we employ \ac{mdeim} as the hyper-reduction strategy, with all relevant details provided in Subsection~\ref{subs: eim}.

\section{A novel TT-RB solver}
\label{sec: tt-rb}

In this section, we discuss the \ac{ttrb} method. As before, our goal is to solve the reduced problem \eqref{eq: space time reduced problem}, but the projection operator is now expressed in a \ac{tt} format:
\begin{equation}
    \label{eq: ttrb basis}
    \bm{\Phi}_{st,\widehat{t}} = \bm{\Phi}_{\widehat{0},1,\widehat{1}} \cdots \bm{\Phi}_{\widehat{d},t,\widehat{t}}.
\end{equation}
We recall that the dimension of the projection subspace is now represented by the axis $\widehat{t}$, instead of $\widehat{st}$ as in \ac{strb}. Specifically, the dimension of a \ac{tt} subspace is determined by the last reduced dimension (see \eqref{eq: hierarchical bases} for more details), whereas in the \ac{strb} case, it is given by the product of the reduced dimensions. In essence, the dimension of the \ac{tt} subspace is independent of the dimension of the snapshots, unlike its \ac{strb} counterpart. 

The content of the section is organized as follows: 
\begin{itemize}
    \item We introduce the standard (Euclidean) \ac{ttsvd} algorithm, originally proposed in \cite{oseledets2011tensor}. We also discuss the accuracy of the procedure, both in the Euclidean case and when imposing an $\bm{X}_{st,st}$-orthogonality condition.
    \item We present a modified \ac{ttsvd} algorithm for computing an $\bm{X}_{st,st}$-orthogonal \ac{tt} decomposition, when $\bm{X}_{s,s}$ is a rank-1 matrix.
    \item We extend the previous algorithm to the more general case of a rank-$K$ norm matrix $\bm{X}_{s,s}$.
    \item We define the standard \ac{mdeim} procedure and describe its extension to empirically interpolate \ac{tt} decompositions, referred to as \ac{ttmdeim}.
    \item We elaborate a method for projecting the \ac{ttmdeim} approximation of residuals and Jacobians, solely exploiting operations on the cores.
    \item Lastly, we present an accuracy measure for the resulting \ac{ttrb} method.
\end{itemize}

\subsection{Basis construction: a priori estimates}
\label{subs: basis construction}

We consider the \ac{fom} snapshots $\bm{U}_{s,t,\mu}$ already introduced in Sect~\ref{sec: rb}, but now expressed in the ``split axes'' format, i.e. $\bm{U}_{1,\hdots,\mu}$. A \ac{tt} decomposition is commonly extracted from the snapshots by running either a \ac{ttsvd} or a \ac{ttcross} strategy. Despite being cheaper, the latter presents two drawbacks: the hierarchical bases have a larger rank for a fixed accuracy, and the unavailability of a priori error estimates. Moreover, in this work we only deal with snapshots tensors of order at most $5$ (in transient problems on a 3-$d$ domain). The \ac{ttcross} method is generally shown to outperform \ac{ttsvd} primarily when compressing tensors of much higher order than those considered here. For these reasons, we construct our \ac{rb} subspaces using the \ac{ttsvd} methodology.

In Alg. \ref{alg: ttsvd}, we review the \ac{ttsvd} presented in \cite{oseledets2011tensor}. For conciseness, we employ the multi-axes notation in \eqref{eq: multi-axes notation}-\eqref{eq: multi-axes vec notation}, and we use the subscript $d+1$ to denote the temporal axis.
\begin{algorithm}[t]
    \caption{\texttt{TT-SVD}: Given the snapshots tensor in the ``split-axes'' format $ \bm{U}_{1,\hdots,\mu}$ and the prescribed accuracy $ \varepsilon $, build the \ac{tt} cores $\bm{\Phi}_{\widehat{0},1,\widehat{1}},\hdots,\bm{\Phi}_{\widehat{d-1},d,\widehat{d}},\bm{\Phi}_{\widehat{d},t,\widehat{t}}$.} 
    \begin{algorithmic}[1]
        \Function{\texttt{TT-SVD}}{$ \bm{U}_{1,\hdots,\mu}, \varepsilon $}
        \State Initialize unfolding matrix: $\bm{T}_{1,2:\mu} = \bm{U}_{1,2:\mu}$ \Comment{$\bm{T}_{1,2:\mu} \in \R^{N_1 \times N_{2:\mu}}$}
        \For{i = $1, \hdots , d $} 
        \State $i$th spatial reduction: $\bm{\Phi}_{\widehat{i-1} i, \widehat{i}}\bm{R}_{\widehat{i},i+1 :  \mu} = \texttt{RSVD}(\bm{T}_{\widehat{i-1} i, i+1 : \mu},\varepsilon)$ \Comment{$\bm{\Phi}_{\widehat{i-1} i, \widehat{i}} \in \R^{r_{i-1}N_i \times r_i}$}
        \State Update unfolding matrix: $\bm{T}_{\widehat{i} ,i+1 : \mu} = \bm{R}_{\widehat{i},i+1 : \mu}$ \Comment{$\bm{T}_{\widehat{i} ,i+1 : \mu} \in \R^{r_i \times N_{i+1:\mu}}$}
        \EndFor
        \State Temporal reduction: $\bm{\Phi}_{\widehat{d}t,\widehat{t}}\bm{R}_{\widehat{t},\mu} = \texttt{RSVD}(\bm{T}_{\widehat{d}  t,\mu},\varepsilon)$ \Comment{$\bm{\Phi}_{\widehat{d}t,\widehat{t}} \in \R^{r_d N_t \times r_t}$}
        \State \Return  $\bm{\Phi}_{\widehat{0},1,\widehat{1}}, \hdots, \bm{\Phi}_{\widehat{d-1},d,\widehat{d}}, \bm{\Phi}_{\widehat{d},t,\widehat{t}}$
        \EndFunction
    \end{algorithmic}
    \label{alg: ttsvd}
\end{algorithm} 
The \ac{ttsvd} computes the \ac{tt} cores by successively applying a \ac{rsvd} on the (truncated) remainder of the previous iteration. Similarly to \ac{tpod}, we use the hyper-parameter $\varepsilon$ to control the error of the algorithm. In particular, the following accuracy statement holds.
\begin{theorem}
    \label{thm: ttsvd accuracy}
    Suppose the unfolding matrices $\bm{T}_{\widehat{i-1}  i,i+1 : \mu}$
    \begin{equation*}
        \bm{T}_{1,2:\mu} = \bm{U}_{1,2:\mu}, \quad \bm{T}_{\widehat{i-1}  i,i+1 : \mu} = \bm{R}_{\widehat{i-1}  i,i+1 : \mu} \quad \forall \ i=2,\hdots,d+1
    \end{equation*}
    admit a low-rank approximation with relative errors $ \varepsilon_i$ for given ranks $r_i$ : 
    \begin{align}
        \label{eq: assumption thm 1}
        \bm{T}_{\widehat{i-1}  i, i+1 : \mu} = \bm{\Phi}_{\widehat{i-1}  i,\widehat{i}}\bm{R}_{\widehat{i},i+1:\mu} + \bm{E}_{\widehat{i-1}  i, i+1 : \mu} \ , \quad 
        \|\bm{E}_{\widehat{i-1}  i, i+1 : \mu}\|^2_F = \varepsilon_i^2 \|\bm{T}_{\widehat{i-1}  i, i+1 : \mu}\|^2_F, \quad \forall \ i=1,\hdots,d+1.
    \end{align}
    Then, the projection operator $\bm{\Phi}_{st,\widehat{t}} $ in \eqref{eq: ttrb basis} satisfies:
    \begin{equation}
        \label{eq: ttsvd accuracy}
        \|\bm{U}_{st,\mu} - \bm{\Phi}_{st,\widehat{t}}\bm{\Phi}_{\widehat{t},st}\bm{U}_{st,\mu}\|_F^2 \leq \sum\limits_{i=1}^{d+1}\varepsilon_i^2 \|\bm{T}_{\widehat{i-1}  i, i+1 : \mu}\|^2_F \leq \varepsilon^2 (d+1) \|\bm{U}_{st,\mu}\|^2_F,
    \end{equation}
    where $\varepsilon = \sup_i \varepsilon_i$.
    \begin{proof}
        By virtue of \eqref{eq: assumption thm 1}, we may express $\bm{U}_{st,\mu}$ as: 
        \begin{equation*}
            \bm{U}_{1,\hdots,\mu} = \bm{\Phi}_{1,\widehat{1}}\bm{R}_{\widehat{1},2,\hdots,\mu} + \bm{E}_{1,\hdots,\mu} = \bm{\Phi}_{1,\widehat{1}}\left(\bm{\Phi}_{\widehat{1},2,\widehat{2}}\left(\cdots \left(\bm{\Phi}_{\widehat{d},t,\widehat{t}}\bm{R}_{\widehat{t},\mu}+\bm{E}_{\widehat{d},t,\mu}\right) \cdots \right)+ \bm{E}_{\widehat{1},2,\hdots,\mu}\right) + \bm{E}_{1,\hdots,\mu}.
        \end{equation*}
        On the other hand, the approximated snapshots 
        \begin{equation*}
            \bm{\Phi}_{st,\widehat{t}}\bm{\Phi}_{\widehat{t},st}\bm{U}_{st,\mu} = \widehat{\bm{U}}_{st,\mu}
        \end{equation*}
        can be written according to 
        \begin{equation*}
            \widehat{\bm{U}}_{1,\hdots,\mu} = \bm{\Phi}_{1,\widehat{1}}\widehat{\bm{R}}_{\widehat{1},2,\hdots,\mu},
        \end{equation*}
        where $\widehat{\bm{R}}_{\widehat{1},2,\hdots,\mu}$ is the \ac{ttsvd} approximation of $\bm{R}_{\widehat{1},2,\hdots,\mu}$, with ranks $(r_2,\hdots,r_t)$. By performing all the \ac{ttsvd} iterations we derive: 
        \begin{equation*}
            \widehat{\bm{U}}_{1,\hdots,\mu} = \bm{\Phi}_{1,\widehat{1}}\widehat{\bm{R}}_{\widehat{1},2,\hdots,\mu} = \bm{\Phi}_{1,\widehat{1}}\bm{\Phi}_{\widehat{1},2,\widehat{2}} \cdots \bm{\Phi}_{\widehat{d},t,\widehat{t}}\bm{R}_{\widehat{t},\mu}.
        \end{equation*}
        Consequently, we can write
        \begin{equation*}
            \bm{U}_{1,2:\mu} - \widehat{\bm{U}}_{1,2:\mu} = \bm{\Phi}_{1,\widehat{1}}\bm{R}_{\widehat{1},2:\mu} + \bm{E}_{1,2:\mu} - \bm{\Phi}_{1,\widehat{1}}\widehat{\bm{R}}_{\widehat{1},2:\mu}.
        \end{equation*}
        By virtue of \eqref{eq: assumption thm 1} and of the orthogonality of every matrix $\bm{\Phi}_{\widehat{i-1}i,\widehat{i}}$, we have: 
        \begin{equation*}
            \begin{split}
                \| \bm{U}_{1,2:\mu} - \widehat{\bm{U}}_{1,2:\mu} \|_F^2 
                &\leq \varepsilon_1^2 \|\bm{T}_{1,2:\mu}\|_F^2 + \|\bm{R}_{\widehat{1},2:\mu} - \widehat{\bm{R}}_{\widehat{1},2:\mu}\|_F^2 = \varepsilon_1^2 \|\bm{T}_{1,2:\mu}\|_F^2 + \|\bm{\Phi}_{\widehat{1},2,\widehat{2}}\bm{R}_{\widehat{2},3:\mu} + \bm{E}_{\widehat{1},2,3:\mu} - \bm{\Phi}_{\widehat{1},2,\widehat{2}}\widehat{\bm{R}}_{\widehat{2},3:\mu}\|_F^2
                \\ &\leq 
                \varepsilon_1^2 \|\bm{T}_{1,2:\mu}\|_F^2 + \varepsilon_2^2 \|\bm{T}_{\widehat{1}2,3:\mu}\|_F^2 + \|\bm{R}_{\widehat{2},3:\mu} - \widehat{\bm{R}}_{\widehat{2},3:\mu}\|_F^2 \leq \hdots \leq \sum\limits_{i=1}^{d+1}\varepsilon_i^2 \|\bm{T}_{\widehat{i-1}  i, i+1 : \mu}\|^2_F.
            \end{split}
        \end{equation*}
        Since $ \bm{T}_{\widehat{i-1}  i, i+1 : \mu} = \bm{R}_{\widehat{i-1},i:\mu} $ is the output of an \ac{rsvd}, we have 
        \[
            \|\bm{T}_{\widehat{i-1}  i, i+1 : \mu}\|^2_F \leq \|\bm{T}_{\widehat{i-2} \,  i-1, i : \mu}\|^2_F \leq \cdots \leq \|\bm{T}_{1,2:\mu}  \|^2_F = \| \bm{U}_{1,2:\mu} \|^2_F.
        \]
        Then the result follows.
    \end{proof}
\end{theorem}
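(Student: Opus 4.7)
The plan is to unfold the recursion implicit in \texttt{TT-SVD} and bound the Frobenius error level by level, exploiting the orthogonality of the TT cores produced by the successive \texttt{RSVD} calls.

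First, I would write the exact snapshots tensor in its "noisy" TT form by applying assumption \eqref{eq: assumption thm 1} recursively: starting from $\bm{U}_{1,2:\mu} = \bm{\Phi}_{1,\widehat{1}}\bm{R}_{\widehat{1},2:\mu} + \bm{E}_{1,2:\mu}$, I would substitute the analogous decomposition of $\bm{R}_{\widehat{1},2:\mu}$ into $\bm{\Phi}_{\widehat{1},2,\widehat{2}}\bm{R}_{\widehat{2},3:\mu}+\bm{E}_{\widehat{1},2,3:\mu}$, and continue down to the temporal step. In parallel, the TT-SVD output is exactly $\widehat{\bm{U}}_{1,2:\mu} = \bm{\Phi}_{1,\widehat{1}}\bm{\Phi}_{\widehat{1},2,\widehat{2}}\cdots\bm{\Phi}_{\widehat{d},t,\widehat{t}}\bm{R}_{\widehat{t},\mu}$, with no residual error terms beyond the last core. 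Subtracting, the difference at the first unfolding reads $\bm{U}_{1,2:\mu} - \widehat{\bm{U}}_{1,2:\mu} = \bm{\Phi}_{1,\widehat{1}}\bigl(\bm{R}_{\widehat{1},2:\mu} - \widehat{\bm{R}}_{\widehat{1},2:\mu}\bigr) + \bm{E}_{1,2:\mu}$.

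Second, I would apply a Pythagorean identity in Frobenius norm. Since $\bm{\Phi}_{1,\widehat{1}}$ carries orthonormal columns (left singular vectors from \texttt{RSVD}) and $\bm{E}_{1,2:\mu}$ is the discarded part of the truncated SVD, the column space of $\bm{\Phi}_{1,\widehat{1}}$ is orthogonal to the column space of $\bm{E}_{1,2:\mu}$. Hence $\|\bm{U}_{1,2:\mu} - \widehat{\bm{U}}_{1,2:\mu}\|_F^2 = \|\bm{R}_{\widehat{1},2:\mu} - \widehat{\bm{R}}_{\widehat{1},2:\mu}\|_F^2 + \|\bm{E}_{1,2:\mu}\|_F^2$. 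Iterating this identity on the inner residual at each subsequent unfolding, the telescoping yields
\begin{equation*}
\|\bm{U}_{1,2:\mu} - \widehat{\bm{U}}_{1,2:\mu}\|_F^2 \leq \sum_{i=1}^{d+1}\|\bm{E}_{\widehat{i-1}\,i,i+1:\mu}\|_F^2 = \sum_{i=1}^{d+1}\varepsilon_i^2\,\|\bm{T}_{\widehat{i-1}\,i,i+1:\mu}\|_F^2,
\end{equation*}
where the last equality uses the error bound in \eqref{eq: assumption thm 1}. This establishes the first inequality of \eqref{eq: ttsvd accuracy}.

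Third, for the second inequality I would exploit that each $\bm{T}_{\widehat{i-1}\,i,i+1:\mu}$ is exactly $\bm{R}_{\widehat{i-1},i:\mu}$, the residual of a truncated SVD applied to the previous unfolding. Using the Pythagorean decomposition $\|\bm{T}_{\widehat{i-2}\,i-1,i:\mu}\|_F^2 = \|\bm{\Phi}_{\widehat{i-2}\,i-1,\widehat{i-1}}\bm{R}_{\widehat{i-1},i:\mu}\|_F^2 + \|\bm{E}_{\widehat{i-2}\,i-1,i:\mu}\|_F^2 \geq \|\bm{R}_{\widehat{i-1},i:\mu}\|_F^2$, I would get the monotonicity chain $\|\bm{T}_{\widehat{i-1}\,i,i+1:\mu}\|_F \leq \cdots \leq \|\bm{U}_{1,2:\mu}\|_F$. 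Substituting this and $\varepsilon_i \leq \varepsilon$ into the summation gives the factor $(d+1)\varepsilon^2\|\bm{U}_{st,\mu}\|_F^2$, completing the proof.

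The main obstacle is the clean justification of the Pythagorean splitting at each level: one must check that the discarded part from the $i$-th \texttt{RSVD} is orthogonal to the kept part in the reshaped sense, and that the inner residual difference $\bm{R}_{\widehat{i-1},i:\mu} - \widehat{\bm{R}}_{\widehat{i-1},i:\mu}$ feeds back consistently as the input to the next level's decomposition. Once the index bookkeeping is fixed and the SVD orthogonality is invoked, the bound follows by a routine telescoping argument.
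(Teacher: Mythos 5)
Your proposal is correct and follows essentially the same route as the paper's proof: recursively unfolding the noisy TT representation, subtracting the TT-SVD approximation, splitting the error at each level via orthogonality of the kept cores and of the discarded SVD remainder, telescoping, and finally using the monotonicity of the unfolding norms $\|\bm{T}_{\widehat{i-1}\,i,i+1:\mu}\|_F \leq \|\bm{U}_{1,2:\mu}\|_F$ to obtain the factor $(d+1)\varepsilon^2$. The only cosmetic difference is that you state the level-wise splitting as a Pythagorean equality (which does hold for truncated-SVD remainders), whereas the paper records it directly as an inequality.
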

Thm. \ref{thm: ttsvd accuracy} states that, under an appropriate assumption of reducibility of the snapshots, running the \ac{ttsvd} with a modified tolerance $\widetilde{\varepsilon} = \varepsilon/\sqrt{d+1}$ yields a basis characterized by the same accuracy as that achieved by the \ac{tpod} basis. We refer to \cite{oseledets2011tensor} -- where a similar accuracy result is stated under slightly different assumptions -- for more details. We also note that one may also run Alg. \ref{alg: ttsvd} on a snapshots tensor with permuted axes, thus obtaining a different reduced subspace. Although an axis ordering exists that minimizes the dimension of the output subspace for a fixed $\varepsilon$, in this work we adopt a consistent heuristic: the first $d$ axes are the spatial ones, ordered according to a standard Cartesian coordinate system, while the last two axes correspond to the temporal and parametric dimensions. \\

In the context of model order reduction for \acp{pde}, it is desirable for the \ac{rb} to satisfy an orthogonality condition in the energy norm of the \ac{fe} spaces. This condition ensures that the \ac{rb} spans a subspace included in the \ac{fom} space, which is important for the well-posedness of the resulting \ac{rom}. In certain cases -- such as \acp{rom} for saddle-point problems -- this condition alone may still not suffice to guarantee well-posedness (see, for example, \cite{ballarin2015supremizer}). However, such scenarios are not addressed in this work.

Our aim is to modify Alg. \ref{alg: ttsvd} so that the resulting \ac{tt} decomposition is $\bm{X}_{st,st}$-orthogonal. In the following theorem, we state the accuracy of the \ac{ttsvd} algorithm under this orthogonality constraint.
\begin{theorem}
    \label{thm: ttsvd accuracy X}
    Let $\bm{X}_{s,s}$ be a matrix representing a norm on a finite-dimensional subspace of a Hilbert space. Let $\widetilde{\bm{U}}_{s,t\mu} = \bm{H}_{s,s}\bm{U}_{s,t\mu}$, with $\bm{H}_{s,s}$ being the upper-triangular Cholesky factor of $\bm{X}_{s,s}$. Let $(\widetilde{\bm{T}}_{s,t\mu},\widetilde{\bm{T}}_{\widehat{s}t,\mu})$ be the spatial and temporal unfoldings of $\widetilde{\bm{U}}_{s,t,\mu}$. Assuming these admit \ac{tt} cores ($\widetilde{\bm{\Phi}}_{s,\widehat{s}}$, $\widetilde{\bm{\Phi}}_{\widehat{s},t,\widehat{t}}$) with relative errors ($ \widetilde{\varepsilon}_s $, $ \widetilde{\varepsilon}_t$) for given ranks ($r_s$, $r_t$) (see \eqref{eq: assumption thm 1}), the following inequality holds:
    \begin{equation}
        \label{eq: ttsvd no split accuracy X}
        \sum\limits_{j=1}^{N_{\mu}} \|\left(\bm{U}_{st,\mu} - \bm{\Phi}_{st,\widehat{t}}\bm{\Phi}_{\widehat{t},st}\bm{X}_{st,st}\bm{U}_{st,\mu}\right)\left[:,j\right]\|_{\bm{X}_{st,st}}^2  \leq (\widetilde{\varepsilon}_s^2 + \widetilde{\varepsilon}_t^2 ) \|\bm{U}_{st,\mu}\|^2_{\bm{X}_{st,st}},
    \end{equation} 
    where 
    \begin{equation}
        \label{eq: ttsvd no split quantities}
        \widetilde{\bm{\Phi}}_{s,\widehat{s}} = \text{\texttt{RSVD}}\left(\widetilde{\bm{T}}_{s,t\mu},\widetilde{\varepsilon}_s\right) , \quad 
        \bm{\Phi}_{s,\widehat{s}} = \bm{H}_{s,s}^{-1}\widetilde{\bm{\Phi}}_{s,\widehat{s}}, \quad 
        \widetilde{\bm{\Phi}}_{\widehat{s}t,\widehat{t}} = \text{\texttt{RSVD}}\left(\widetilde{\bm{T}}_{\widehat{s}t,\mu},\widetilde{\varepsilon}_t\right) , \quad
        \bm{\Phi}_{\widehat{s},t,\widehat{t}} = \widetilde{\bm{\Phi}}_{\widehat{s},t,\widehat{t}}, \quad 
        \bm{\Phi}_{st,\widehat{t}} = \bm{\Phi}_{s,\widehat{s}}\bm{\Phi}_{\widehat{s},t,\widehat{t}}.
    \end{equation} 
    Moreover, if $ \widetilde{\bm{U}}_{s,t,\mu} $ admits a ``split-axes'' representation, and its unfoldings $\widetilde{\bm{T}}_{\widehat{i-1}  i, i+1 : \mu}$ admit \ac{tt} cores $\widetilde {\bm{\Phi}}_{\widehat{i-1}  i,\widehat{i}} $ with relative errors $\widetilde{\varepsilon}_i$ for given ranks $r_i$,
    % We denote the \ac{ttsvd} unfoldings of $\widetilde{\bm{U}}_{1,\hdots,\mu}$ as $\widetilde{\bm{T}}_{\widehat{i-1}  i, i+1 : \mu}$, which we suppose admit a low-rank approximation for every $i=1,\hdots,d+1$ with relative errors $ \varepsilon_i $: 
    % \begin{align}
    %     \label{eq: assumption X norm}
    %     \widetilde{\bm{T}}_{\widehat{i-1}  i, i+1 : \mu} = \widetilde{\bm{\Phi}}_{\widehat{i-1}  i,i+1 : \mu} + \widetilde{\bm{E}}_{\widehat{i-1}  i, i+1 : \mu}, \  
    %     \text{rank}(\widetilde{\bm{\Phi}}_{\widehat{i-1}  i, i+1 : \mu}) = r_i, 
    %     \   
    %     \|\widetilde{\bm{E}}_{\widehat{i-1}  i, i+1 : \mu}\|^2_F = \varepsilon_i^2 \| \widetilde{\bm{T}}_{\widehat{i-1}  i, i+1 : \mu} \|^2_F.
    % \end{align} 
    the following inequality holds: 
    \begin{equation}
        \label{eq: ttsvd accuracy X}
        \sum\limits_{j=1}^{N_{\mu}} \|\left(\bm{U}_{st,\mu} - \bm{\Phi}_{st,\widehat{t}}\bm{\Phi}_{\widehat{t},st}\bm{X}_{st,st}\bm{U}_{st,\mu}\right)\left[:,j\right]\|_{\bm{X}_{st,st}}^2 \leq \sum\limits_{i=1}^{d+1}\varepsilon_i^2 \| \widetilde{\bm{T}}_{\widehat{i-1}  i, i+1 : \mu} \|^2_F \leq \widetilde{\varepsilon}^2 (d+1) \|\bm{U}_{st,\mu}\|^2_{\bm{X}_{st,st}},
    \end{equation} 
    where $\widetilde{\varepsilon} = \sup_i \widetilde{\varepsilon}_i$. 
\end{theorem}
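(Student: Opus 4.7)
The plan is to reduce both inequalities to the Euclidean estimate already proved in Theorem \ref{thm: ttsvd accuracy} by means of the Cholesky transformation introduced in the statement. Let $\bm{H}_{st,st}$ denote the block-diagonal matrix whose $N_t$ diagonal blocks equal $\sqrt{\delta}\,\bm{H}_{s,s}$, so that $\bm{X}_{st,st} = \bm{H}_{st,st}^T\bm{H}_{st,st}$ and the isometry $\|\bm{v}\|_{\bm{X}_{st,st}} = \|\bm{H}_{st,st}\bm{v}\|_F$ holds for every space-time vector $\bm{v}$. Set $\widetilde{\bm{\Phi}}_{st,\widehat{t}} = \bm{H}_{st,st}\bm{\Phi}_{st,\widehat{t}}$. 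A direct computation using $\bm{\Phi}_{s,\widehat{s}} = \bm{H}_{s,s}^{-1}\widetilde{\bm{\Phi}}_{s,\widehat{s}}$ together with the block-diagonal structure of $\bm{H}_{st,st}$ shows that, up to the harmless global factor $\sqrt{\delta}$, the matrix $\widetilde{\bm{\Phi}}_{st,\widehat{t}}$ is precisely the contraction of the Euclidean cores $\widetilde{\bm{\Phi}}_{s,\widehat{s}}$ and $\widetilde{\bm{\Phi}}_{\widehat{s},t,\widehat{t}}$ produced by a standard TT-SVD of $\widetilde{\bm{U}}_{s,t,\mu}$; in particular its columns are orthonormal in the Euclidean inner product.

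Exploiting $\bm{X}_{st,st} = \bm{H}_{st,st}^T\bm{H}_{st,st}$, the $\bm{X}_{st,st}$-orthogonal residual rewrites as
\begin{equation*}
    \bm{H}_{st,st}\!\left(\bm{U}_{st,\mu} - \bm{\Phi}_{st,\widehat{t}}\bm{\Phi}_{\widehat{t},st}\bm{X}_{st,st}\bm{U}_{st,\mu}\right) = \widetilde{\bm{U}}_{st,\mu} - \widetilde{\bm{\Phi}}_{st,\widehat{t}}\widetilde{\bm{\Phi}}_{\widehat{t},st}\widetilde{\bm{U}}_{st,\mu}.
\end{equation*}
Summing squared Frobenius norms over the parameter axis $j$ on each side then converts the left-hand side of \eqref{eq: ttsvd no split accuracy X} into the Euclidean TT-SVD approximation error of $\widetilde{\bm{U}}_{s,t,\mu}$. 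Applying Theorem \ref{thm: ttsvd accuracy} to the two-factor decomposition of $\widetilde{\bm{U}}$ (the case $d+1=2$) yields the upper bound $\widetilde{\varepsilon}_s^2\|\widetilde{\bm{T}}_{s,t\mu}\|_F^2 + \widetilde{\varepsilon}_t^2\|\widetilde{\bm{T}}_{\widehat{s}t,\mu}\|_F^2 \le (\widetilde{\varepsilon}_s^2 + \widetilde{\varepsilon}_t^2)\|\widetilde{\bm{U}}_{s,t\mu}\|_F^2$, and the Cholesky isometry turns $\|\widetilde{\bm{U}}_{s,t\mu}\|_F$ back into $\|\bm{U}_{st,\mu}\|_{\bm{X}_{st,st}}$, establishing \eqref{eq: ttsvd no split accuracy X}.

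For the second bound I would invoke the hypothesis that the rescaled tensor $\widetilde{\bm{U}}_{s,t,\mu}$ admits a split-axes representation $\widetilde{\bm{U}}_{1,\ldots,d,t,\mu}$---a condition guaranteed, as discussed in the subsections that follow, when $\bm{H}_{s,s}$ inherits a Kronecker product structure from $\bm{X}_{s,s}$. Running Alg. \ref{alg: ttsvd} on $\widetilde{\bm{U}}$ then produces $d+1$ Euclidean cores whose contraction coincides with $\widetilde{\bm{\Phi}}_{st,\widehat{t}}$, and rescaling the first spatial core by $\bm{H}_{s,s}^{-1}$ recovers the original $\bm{X}_{st,st}$-orthogonal TT basis. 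Applying Theorem \ref{thm: ttsvd accuracy} verbatim to $\widetilde{\bm{U}}$ and combining with the isometry above yields the middle inequality in \eqref{eq: ttsvd accuracy X}; the telescoping inequality $\|\widetilde{\bm{T}}_{\widehat{i-1}i,i+1:\mu}\|_F \le \|\widetilde{\bm{U}}_{s,t\mu}\|_F$, already exploited in the proof of Theorem \ref{thm: ttsvd accuracy}, produces the final factor $d+1$. The only genuinely delicate step I anticipate is verifying that $\bm{H}_{st,st}\bm{\Phi}_{st,\widehat{t}}$ really equals the Euclidean TT contraction of the cores of $\widetilde{\bm{U}}$, including the correct bookkeeping of the $\sqrt{\delta}$ scaling across cores; once this identification is secured, the theorem becomes essentially a corollary of Theorem \ref{thm: ttsvd accuracy}.
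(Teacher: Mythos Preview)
Your proposal is correct and follows essentially the same route as the paper: both reduce the $\bm{X}_{st,st}$-weighted projection error to the Frobenius error of the Euclidean TT-SVD of $\widetilde{\bm{U}}$ via the Cholesky isometry, invoke Theorem~\ref{thm: ttsvd accuracy} (first in the two-core case, then in the full split-axes case), and convert back using $\|\widetilde{\bm{U}}_{st,\mu}\|_F = \|\bm{U}_{st,\mu}\|_{\bm{X}_{st,st}}$. The paper handles your anticipated ``delicate step'' about the $\sqrt{\delta}$ scaling simply by remarking that the multiplicative constant $\delta$ may be disregarded, so your concern there is warranted but ultimately harmless.
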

\begin{proof}
    We first consider the tuple $\left(\widetilde{\bm{U}}_{s,t\mu}, \widetilde{\bm{\Phi}}_{s,\widehat{s}}\right)$ and apply Thm. \ref{thm: ttsvd accuracy}:
    \begin{equation}
        % \label{eq: X bound}
        \|\widetilde{\bm{U}}_{st,\mu} - \widetilde{\bm{\Phi}}_{st,\widehat{t}}\widetilde{\bm{\Phi}}_{\widehat{t},st}\widetilde{\bm{U}}_{st,\mu} \|^2_F \leq  (\widetilde{\varepsilon}_s^2 + \widetilde{\varepsilon}_t^2 )\|\widetilde{\bm{U}}_{st,\mu}\|^2_F, 
    \end{equation} 
    where 
    \begin{equation*}
        \widetilde{\bm{\Phi}}_{s,t,\widehat{t}} = \widetilde{\bm{\Phi}}_{s,\widehat{s}}\bm{\Phi}_{\widehat{s},t,\widehat{t}}.
    \end{equation*}
    Then, recalling the definition of $\bm{X}_{st,st}$ in Subsection \ref{subs:full order model} (we may disregard the multiplicative constant $\delta$, representing the time step), we have
    \begin{equation}
        \label{eq: frobenius - X relationship2}
        \begin{split}
            \sum\limits_{j=1}^{N_{\mu}} \|\left(\bm{U}_{st,\mu} - \bm{\Phi}_{st,\widehat{t}}\bm{\Phi}_{\widehat{t},st}\bm{X}_{st,st}\bm{U}_{st,\mu}\right)\left[:,j\right]\|_{\bm{X}_{st,st}}^2 &= \sum\limits_{j=1}^{N_{\mu}} \|\left(\bm{U}_{st,\mu} - \bm{\Phi}_{st,\widehat{t}}\widetilde{\bm{\Phi}}_{st,\widehat{t}}\widetilde{\bm{U}}_{st,\mu}\right)\left[:,j\right]\|_{\bm{X}_{s,s}}^2 
            \\ &= \|\widetilde{\bm{U}}_{st,\mu} - \widetilde{\bm{\Phi}}_{st,\widehat{t}}\widetilde{\bm{\Phi}}_{\widehat{t},st}\widetilde{\bm{U}}_{st,\mu} \|^2_F.
        \end{split}
    \end{equation}
    Eq. \eqref{eq: ttsvd no split accuracy X} follows from $ \|\widetilde{\bm{U}}_{st,\mu}\|^2_F = \|\bm{U}_{st,\mu}\|^2_{\bm{X}_{st,st}} $. Finally, if $ \widetilde{\bm{U}}_{s,t,\mu} $ admits a ``split-axes'' representation, \eqref{eq: ttsvd accuracy X} can be obtained by using the same procedure as in Thm. \ref{thm: ttsvd accuracy}.
\end{proof}
Therefore, we first pre-multiply the snapshots tensor by the Cholesky factor $\bm{H}_{s,s}$. Secondly, we post-multiply the \ac{tt} basis we extract from the resulting tensor by $\bm{H}_{s,s}^{-1}$. This subspace, in the energy norm, has the same approximation capability as the one computed via standard \ac{ttsvd} in the Euclidean norm. However, it is important to note that both the computation of $ \bm{H}_{s,s} $ and $ \widetilde{\bm{U}}_{s,t\mu} $ involve matrices of size $ N_s \times N_s $, which entails considerable costs. To address this issue, we develop a sequential algorithm that exploits the ``split-axes'' principle. We first present the idea in a simplified case where $ \bm{X}_{s,s} $ is a rank-one matrix, such as when $\bm{X}_{s,s} = \bm{M}_{s,s}$, as the mass matrix is indeed rank-1. We then extend the algorithm to handle a generic $ \bm{X}_{s,s} $ of rank-$ K $.

\subsection{Basis construction: the case of a rank-1 norm matrix}
\label{subs: basis construction X-1}

When $ \bm{X}_{s,s} $ is a rank-1 matrix, the Cholesky decomposition of a Kronecker product matrix applies:
\begin{equation}
    \label{eq: cholesky kronecker}
    \texttt{Cholesky}\left(\bm{X}_{1,1} \otimes \cdots \otimes \bm{X}_{d,d}\right) = \bm{H}_{1,1} \otimes \cdots \otimes \bm{H}_{d,d}.
\end{equation}
Now, let us recall the diadic representation \cite{oseledets2011tensor} of $\widetilde{\bm{U}}_{s,t\mu}$. Let us consider the first unfolding
\begin{equation*}
    \widetilde{\bm{T}}_{1,\hdots,\mu} = \widetilde{\bm{U}}_{1,\hdots,\mu}.
\end{equation*}
By applying a \texttt{RSVD} with $\varepsilon = 0$, we can write 
\begin{equation*}
    \widetilde{\bm{T}}_{1,2:\mu} = \widetilde{\bm{\Phi}}_{1,\sigma_1} \widetilde{\bm{R}}_{\sigma_1,2:\mu},
\end{equation*}
where the subscript $\sigma_1$ refers to $N_{\sigma_1} = \min\{N_1,N_{2:\mu}\}$, i.e. the rank of $\widetilde{\bm{T}}_{1,2:\mu}$. By applying a \ac{rsvd} on the second unfolding 
\begin{equation*}
    \widetilde{\bm{T}}_{\sigma_1  2,3:\mu} = \widetilde{\bm{R}}_{\sigma_1 2, 3:\mu},
\end{equation*}
we can write 
\begin{equation*}
    \widetilde{\bm{T}}_{\sigma_1  2,3:\mu} = \widetilde{\bm{\Phi}}_{\sigma_1  2,\sigma_2} \widetilde{\bm{R}}_{\sigma_2,3:\mu},
\end{equation*}
where $\sigma_2$ refers to $N_{\sigma_2} = \min\{r_1N_2,N_{3:\mu}\}$. This procedure continues iteratively, until we get
\begin{equation}
    \label{eq: diadic representation}
    \widetilde{\bm{U}}_{s,t\mu} = \widetilde{\bm{\Phi}}_{1,\sigma_1} \widetilde{\bm{\Phi}}_{\sigma_1,2,\sigma_2} \cdots \widetilde{\bm{\Phi}}_{\sigma_{d-1},d,\sigma_d} \widetilde{\bm{R}}_{\sigma_d,t\mu}.
\end{equation}
Now let us consider a different diadic decomposition for $\widetilde{\bm{U}}_{s,t\mu}$. Notice that
\begin{equation*}
    \widetilde{\bm{U}}_{s,t\mu} = \left(\bm{H}_{1,1} \otimes \cdots \otimes \bm{H}_{d,d}\right) \bm{U}_{s,t\mu} 
    = \bm{H}_{d, d} \odot_d \left(\cdots \ \left( \bm{H}_{1, 1} \odot_1 \bm{U}_{ 1,\hdots,\mu} \right)\right).
\end{equation*}
We introduce the rescaled unfolding 
\begin{equation*}
    \widebar{\bm{T}}_{1,2:\mu} = \bm{H}_{1,1} \bm{U}_{1,2:\mu},
\end{equation*}
which admits a diadic representation 
\begin{equation*}
    \widebar{\bm{T}}_{1,2:\mu} = \widebar{\bm{\Phi}}_{1,\sigma_1}\widebar{\bm{R}}_{\sigma_1,2:\mu}.
\end{equation*}
Then, we rescale the unfolding $\widebar{\bm{R}}_{\sigma_1,2:\mu}$ by $\bm{H}_{2,2}$, i.e. 
\begin{equation*}
    \widebar{\bm{T}}_{\sigma_1,2,3:\mu} = \bm{H}_{2,2} \odot_2 \widebar{\bm{R}}_{\sigma_1,2,3:\mu},
\end{equation*}
which admits its own diadic format, and so on. By iteration, we can derive a similar expression to \eqref{eq: diadic representation}:
\begin{equation}
    \label{eq: X diadic representation}
    \widetilde{\bm{U}}_{s,t\mu} 
    = \widebar{\bm{\Phi}}_{1,\sigma_1} \widebar{\bm{\Phi}}_{\sigma_1,2,\sigma_2} \cdots \widebar{\bm{\Phi}}_{\sigma_{d-1},d,\sigma_d} \widetilde{\bm{R}}_{\sigma_d,t\mu}.
\end{equation}
The representations \eqref{eq: diadic representation} and \eqref{eq: X diadic representation} admit a spectrally equivalent final unfolding $\widetilde{\bm{R}}_{\sigma_d t,\mu}$. For this reason, we have employed the same variable name in both \eqref{eq: diadic representation} and \eqref{eq: X diadic representation}. Specifically, one can empirically verify that the two unfoldings are equal in absolute value, up to machine precision. This implies that the temporal subspace associated with \eqref{eq: diadic representation} is equivalent to the one in \eqref{eq: X diadic representation}. This outcome is expected, since pre-multiplying by $\bm{X}_{s,s}$ does not affect the temporal evolution of the snapshots. On the other hand, the spatial cores in \eqref{eq: diadic representation} differ from those in \eqref{eq: X diadic representation}. \\ 
By virtue of the hierarchical property \eqref{eq: hierarchical bases}, it follows that the \ac{tt} decompositions \eqref{eq: diadic representation}-\eqref{eq: X diadic representation} span an equivalent subspace. From a computational perspective, however, running the latter is significantly more efficient, as it avoids the costly Cholesky factorization of $\bm{X}_{s,s}$ and the rescaling of the snapshots by $\bm{H}_{s,s}$. Instead, for every iteration $i$ of the \ac{ttsvd} \emph{for} loop, it merely computes $\bm{H}_{i,i}$ and rescales the respective unfolding. The procedure is summarized in Alg. \ref{alg: ttsvd orthogonality}. \\
\begin{algorithm}[t]
    \caption{\texttt{$X^1$-TT-SVD}: Given the snapshots tensor in the ``split-axes'' format $ \bm{U}_{1,\hdots,\mu}$, the prescribed accuracy $ \varepsilon $, and the 1-d norm matrices $\bm{X}_{1,1}, \hdots, \bm{X}_{d,d}$, build the $\bm{X}_{st,st}$-orthogonal TT-cores $\bm{\Phi}_{\widehat{0},1,\widehat{1}},\hdots,\bm{\Phi}_{\widehat{d-1},d,\widehat{d}},\bm{\Phi}_{\widehat{d},t,\widehat{t}}$.} 
    \begin{algorithmic}[1]
        \Function{\texttt{$X^1$-TT-SVD}}{$ \bm{U}_{1,\hdots,\mu}, \bm{X}_{i,i}, \varepsilon $}
        \State Set $\bm{T}_{1,\hdots,\mu} = \bm{U}_{1,\hdots,\mu}$
        \For{i = $1, \hdots , d$}
        \State $i$th Cholesky factorization: $\bm{H}_{i,i} = \texttt{Cholesky}(\bm{X}_{i,i})$  \Comment{$\mathcal{O}\left(N_i b_i^2\right)$} 
        \State $i$th spatial rescaling: $\widetilde{\bm{T}}_{\widehat{i-1}, i, i+1:\mu} = \bm{H}_{i,i} \odot_2 \bm{T}_{\widehat{i-1}, i, i+1:\mu}$ \Comment{$\mathcal{O}\left(r_{i-1} N_{z_i} N_{i+1:\mu} \right)$}
        \State $i$th spatial reduction: $\widetilde{\bm{\Phi}}_{\widehat{i-1}  i,\widehat{i}} \widetilde{\bm{R}}_{\widehat{i},i+1:\mu} = \texttt{RSVD}(\widetilde{\bm{T}}_{\widehat{i-1}  i, i+1:\mu})$ \Comment{$\mathcal{O}\left( r_{i-1}N_{i:\mu} \log{ (\min \{ N_{i+1:\mu}, r_{i-1}N_i \}) }\right)$}
        \State $i$th spatial inverse rescaling: $\bm{\Phi}_{\widehat{i-1}, i, \widehat{i}} = \bm{H}_{i,i}^{-1} \odot_2 \widetilde{\bm{\Phi}}_{\widehat{i-1}, i, \widehat{i}}$ \Comment{$\mathcal{O}\left(r_{i-1} r_i N_i^2 \right)$}
        \State Update unfolding matrix: $\bm{T}_{\widehat{i} ,i+1:\mu} = \bm{R}_{\widehat{i} ,i+1:\mu}$
        \EndFor
        \State Compute $\bm{\Phi}_{\widehat{d}, t, \widehat{t}}$ as in Alg. \ref{alg: ttsvd} \Comment{$\mathcal{O}\left(r_d N_t N_{\mu} \log{N_{\mu}} \right)$}
        \State \Return  $\bm{\Phi}_{\widehat{0},1,\widehat{1}}, \hdots, \bm{\Phi}_{\widehat{d-1},d,\widehat{d}}, \bm{\Phi}_{\widehat{d},t,\widehat{t}}$
        \EndFunction
    \end{algorithmic}
    \label{alg: ttsvd orthogonality}
\end{algorithm} 
Here, we use $N_{z_i}$ and $b_i$ to indicate the nonzero entries and the semi-bandwidth of the sparse matrix $\bm{X}_{i,i}$. As already observed, the operations concerning the $\bm{X}_{s,s}$-orthogonality are now independent of the global size of the problem, as they only scale as the 1-d sizes $N_i$. The only operation that still depends on $N_{st}$ is the \ac{rsvd}. We briefly discuss the bound on the computational cost of Alg. \ref{alg: ttsvd orthogonality}, and we qualitatively compare this bound with the cost of the \ac{tpod}. To facilitate this comparison, we assume for simplicity that:
\begin{itemize}
    \item The snapshots tensor is a ``perfect cube'': $N_1 = \hdots = N_d = N_t = N_{\mu} = M$.
    \item There exists a ``bounding rank'' $r \ll M$ such that $r_i < r$ for every $i$. Our numerical experiments indicate that such a rank exists, even though it might be that $r = \mathcal{O}(M)$ in applications characterized by poor reducibility, as defined in \cite{quarteroni2015reduced}.
\end{itemize}
Under the ``perfect cube'' assumption, we can express the semi-bandwidth in Alg. \ref{alg: tpod} as 
\begin{equation*}
    b = \mathcal{O}(M^{d-1}).
\end{equation*}
This result is intuitive: since $\bm{X}_{s,s}$ is computed by following the usual \ac{fe} integration-assembly routines, its sparsity pattern is determined by the proximity of neighboring \ac{fe} cells. Under the ``perfect cube'' assumption, the maximum distance between two neighboring cells scales with $M^{d-1}$. Consequently, the procedure in Alg. \ref{alg: tpod} has a cost of
\begin{equation}
    \label{eq: cost tpod}
    \mathcal{O}\left( M^{3d-2} + (pd)^d M^{d+2} + M^{d+2}\log{M} + r_s M^{2d} + r_s M^{d+2} + r_s M^2 \log{M}\right) = \mathcal{O}\left( M^{3d-2} + M^{d+2}\log{M} \right) \ \forall \ d \geq 1.
\end{equation}
On the other hand, under the same assumptions, we can bound the complexity of Alg. \ref{alg: ttsvd orthogonality} as follows: 
\begin{equation*}
    \text{cost}(\text{\texttt{$X^1$-TT-SVD}}) \leq (d+1) \cdot \text{cost} (\text{$1$st iteration of \texttt{$X^1$-TT-SVD}}).
\end{equation*} 
This result is a consequence of the decreasing cost of the \ac{ttsvd} iterations, which is due to the fact that $N_{i:\mu}$ itself decreases. Based on this observation, and recalling that $r_0 = 1$, the complexity of Alg. \ref{alg: ttsvd orthogonality} can be bounded by:
\begin{equation}
    \label{eq: cost ttsvd}
    (d+1) \cdot \mathcal{O} \left(M + M^{d+2} + M^{d+2}\log{M} + r M^2\right) = \mathcal{O}\left( (d+1) M^{d+2}\log{M} \right) \ \forall \ d \geq 1.
\end{equation}
Comparing the estimates \eqref{eq: cost tpod} and \eqref{eq: cost ttsvd}, we immediately observe that \ac{ttsvd} reduces the cost of every operation in \ac{tpod}, except for the compression step. Another important observation is:
\begin{equation}
    \label{eq: cost result}
    \text{cost}(\text{\texttt{$X^1$-TT-SVD}}) < \text{cost}(\text{\texttt{TPOD}}) \ \forall \ d > 2.
\end{equation}
The reason \ac{ttsvd} outperforms \ac{tpod} for $d>2$ is that the cost of the latter scales with the Cholesky decomposition of $\bm{X}_{s,s}$. On the other hand, the former essentially performs all operations involving the norm matrices $\bm{X}_{i,i}$ at negligible cost, as the dominant cost always lies in compressing the snapshots. An obvious consequence is that, when we seek an $\ell^2$-orthogonal basis, the cost of \ac{ttsvd} matches that of \ac{tpod}. In this case, the only viable way to reduce the complexity of the \ac{tt} decomposition is to employ a more efficient algorithm than \ac{rsvd} for the compression, for e.g. \ac{ttcross}.

\subsection{Basis construction: the case of a rank-$K$ norm matrix}
\label{subs: basis construction X-K}
Now, let us consider the more involved case where $\bm{X}_{s,s}$ admits the form 
\begin{equation}
    \label{eq: generic form X}
    \bm{X}_{s,s} = \sum\limits_{k=1}^K \overset{d}{\underset{i=1}{\otimes}} \bm{Y}_{i,i}^k,
\end{equation}
with $K$ representing the rank of the norm matrix. This situation arises in our \ac{fom}, where the norm matrix is given by a stiffness matrix (see \eqref{eq: H1 norm matrix}). In this scenario, we cannot exploit \eqref{eq: cholesky kronecker} as we do when deriving Alg. \ref{alg: ttsvd orthogonality}. To obtain an $\bm{X}_{s,s}$-orthogonal basis in the case of a rank-$K$ norm matrix, we propose the following procedure: 
\begin{enumerate}
    \item Compute the temporary \ac{tt} decomposition:
    \begin{equation}
        \label{eq: new X cores}
        \widecheck{\bm{\Phi}}_{\widehat{0},1,\widehat{1}},\hdots,\widecheck{\bm{\Phi}}_{\widehat{d-1},d,\widehat{d}},\widecheck{\bm{\Phi}}_{\widehat{d},t,\widehat{t}}.
    \end{equation} 
    \item Perform an $\bm{X}_{s,s}$-orthogonalization procedure on the spatial cores of \eqref{eq: new X cores}.
\end{enumerate}
Let us first detail the orthogonalization strategy. An efficient algorithm that only runs operations on the cores \eqref{eq: new X cores} can be derived by writing down the orthogonality condition:
\begin{equation*}
    \bm{I}_{\widehat{s},\widehat{s}} = \widecheck{\bm{\Phi}}_{\widehat{s},s}\bm{X}_{s,s}\widecheck{\bm{\Phi}}_{s,\widehat{s}}.
\end{equation*}
For simplicity, we consider the case $d=2$. Exploiting the mixed-product property of the Kronecker product, we have
\begin{equation}
    \label{eq: ttsvd orthogonality}
    \begin{split}
        \widecheck{\bm{\Phi}}_{\widehat{2},12} \bm{X}_{12,12} \widecheck{\bm{\Phi}}_{12,\widehat{2}} &= \sum_{\alpha_1,\alpha_2,\beta_1,\beta_2,k} \left(\widecheck{\bm{\Phi}}_{\widehat{0},1,\widehat{1}} \left[1,:,\alpha_1\right]^T \bm{Y}_{1,1}^k \widecheck{\bm{\Phi}}_{\widehat{0},1,\widehat{1}} \left[1,:,\beta_1\right]\right) \left(\widecheck{\bm{\Phi}}_{\widehat{1},2,\widehat{2}} \left[\alpha_1,:,\alpha_2\right]^T \bm{Y}_{2,2}^k \widecheck{\bm{\Phi}}_{\widehat{1},2,\widehat{2}} \left[\beta_1,:,\beta_2\right]\right)
        \\ &= \sum_{\alpha_2,\beta_2}\sum_{\alpha_1,\beta_1} \widecheck{\bm{\Phi}}_{\widehat{1},2,\widehat{2}} \left[\alpha_1,:,\alpha_2\right]^T \left(\sum_k \widehat{\bm{Y}}_{\widehat{1},\widehat{1}}^k\left[\alpha_1,\beta_1\right] \bm{Y}_{2,2}^k\right)  \widecheck{\bm{\Phi}}_{\widehat{1},2,\widehat{2}} \left[\beta_1,:,\beta_2\right]
        \\ &= \widecheck{\bm{\Phi}}_{\widehat{2},\widehat{1}2} \widehat{\bm{X}}_{\widehat{1}2,\widehat{1}2} \widecheck{\bm{\Phi}}_{\widehat{1}2,\widehat{2}}.
    \end{split}
\end{equation}
The matrices $\widehat{\bm{Y}}_{\widehat{1},\widehat{1}}^k$ and $\widehat{\bm{X}}_{\widehat{1}2,\widehat{1} 2}$ are given by 
\begin{align*}
    \widehat{\bm{Y}}_{\widehat{1},\widehat{1}}^k\left[\alpha_1,\beta_1\right] = \widecheck{\bm{\Phi}}_{\widehat{0},1,\widehat{1}} \left[1,:,\alpha_1\right]^T \bm{Y}_{1,1}^k \widecheck{\bm{\Phi}}_{\widehat{0},1,\widehat{1}}\left[1,:,\beta_1\right], \qquad 
    \widehat{\bm{X}}_{\widehat{1}2,\widehat{1}2} = \sum\limits_{k=1}^K \bm{Y}^k_{2,2} \otimes \widehat{\bm{Y}}_{\widehat{1},\widehat{1}}^k.
\end{align*}

Eq. \eqref{eq: ttsvd orthogonality} implies that requiring $\bm{X}_{12,12}$-orthogonality of $\widecheck{\bm{\Phi}}_{12,\widehat{2}}$ is equivalent to enforcing the $\widehat{\bm{X}}_{\widehat{1}2,\widehat{1} 2}$-orthogonality of $\widecheck{\bm{\Phi}}_{\widehat{1} 2,\widehat{2}}$. Note that the modified norm matrix $\widehat{\bm{X}}_{\widehat{1}2,\widehat{1}2}$ is sparse, with a number of nonzero elements equal to $r_1^2 N_{z_2}$. Generalizing to the case of an arbitrary $d \geq 2$, we require $\widecheck{\bm{\Phi}}_{\widehat{d-1}  d,\widehat{d}}$ to be orthogonal with respect to
\begin{equation}
    \label{eq: ttsvd final weight}
    \widehat{\bm{X}}_{\widehat{d-1}  d,\widehat{d-1}  d} = \sum\limits_{k=1}^K \bm{Y}^k_{d,d} \otimes \widehat{\bm{Y}}_{\widehat{d-1},\widehat{d-1}}^k,
\end{equation}
where
\begin{align}
    \label{eq: ttsvd orthogonality recursive}
    \widehat{\bm{Y}}^k_{\widehat{i},\widehat{i}}\left[\alpha_i,\beta_i\right] = \sum\limits_{\alpha_{i-1},\beta_{i-1}}\widehat{\bm{Y}}^k_{\widehat{i-1},\widehat{i-1}}\left[\alpha_{i-1},\beta_{i-1}\right] \widecheck{\bm{\Phi}}_{\widehat{i-1},i,\widehat{i}} \left[\alpha_{i-1},:,\alpha_i\right]^T \bm{Y}_{i,i}^k \widecheck{\bm{\Phi}}_{\widehat{i-1},i,\widehat{i}}\left[\beta_{i-1},:,\beta_i\right]. 
\end{align}
Since the relationship \eqref{eq: ttsvd orthogonality recursive} is recursive, the final matrix \eqref{eq: ttsvd final weight} can be built iteratively. In terms of computational cost, under the usual ``perfect square'' and ``bounded rank'' assumptions, the cost of the orthogonalization procedure is given by 
\begin{equation*}
    \mathcal{O}\left( r^3 M + r^4 (d-1) K M \right).
\end{equation*}
Recalling \eqref{eq: cost ttsvd}, we notice that the cost of the orthogonalization scheme is negligible compared to the cost of computing the \ac{tt} cores, for every $d$.

We now address the computation of the cores \eqref{eq: new X cores}. An appropriate method consists in first selecting a rank-$1$ norm matrix $\widecheck{\bm{X}}_{s,s}$ that is ``similar'' to $\bm{X}_{s,s}$, for e.g. one representing an equivalent norm to the one represented by $\bm{X}_{s,s}$. Then, we may compute \eqref{eq: new X cores} by running Alg. \ref{alg: ttsvd orthogonality} on the pair $(\bm{U}_{1:\mu},\widecheck{\bm{X}}_{s,s})$. For instance, we may consider
\begin{equation}
    \label{eq: crossnorm X}
    \widecheck{\bm{X}}_{s,s} \ \text{ such that } \ \|\cdot\|_{\widecheck{\bm{X}}_{s,s}} = \max\limits_{k=1,\hdots,K}\{\|\cdot\|_{\overset{d}{\underset{i=1}{\otimes}} \bm{Y}_{i,i}^k}\},
\end{equation}
which represents a \textit{reasonable crossnorm} \cite{hackbusch2012tensor} on $\mathcal{V}^0_h$. Although the error bound \eqref{eq: ttsvd accuracy X} is no longer guaranteed in this case, we show in Sect.~\ref{sec: results} that the procedure yields a correct error decay with respect to the tolerances. The whole method is summarized in Alg. \ref{alg: ttsvd orthogonality recursive}. We note that this algorithm slightly improves the method described qualitatively so far. Rather than first computing \eqref{eq: new X cores} and then applying the orthogonalization strategy, we may instead run a single, more efficient \emph{for} loop in which the final \ac{tt} decomposition is directly computed.
 
\begin{algorithm}[t]
    \caption{\texttt{$X^K$-TT-SVD}: Given the snapshots tensor in the ``split-axes'' format $ \bm{U}_{1,\hdots,\mu}$, the prescribed accuracy $ \varepsilon $, and the 1-d norm matrices $\bm{Y}^k_{1,1}, \hdots, \bm{Y}^k_{d,d}$ for every $k$, build the $\bm{X}_{st,st}$-orthogonal TT-cores $\bm{\Phi}_{\widehat{0},1,\widehat{1}},\hdots,\bm{\Phi}_{\widehat{d-1},d,\widehat{d}},\bm{\Phi}_{\widehat{d},t,\widehat{t}}$ .} 
    \begin{algorithmic}[1]
        \Function{\texttt{$X^K$-TT-SVD}}{$ \bm{U}_{1,\hdots,\mu}, \bm{Y}^k_{i,i}, \varepsilon $}
        \State Derive a rank-1 norm matrix $\widecheck{\bm{X}}_{s,s}$ from $\bm{Y}^k_{i,i}$
        \For{i = $1, \hdots , d$}
        \If{$i < d$}
        \State Compute $\bm{\Phi}_{\widehat{i-1},i,\widehat{i}}$ as in \texttt{$X^1$-TT-SVD}$(\bm{U}_{1,\hdots,\mu}, \widecheck{\bm{X}}_{s,s}, \varepsilon)$ \Comment{$\mathcal{O}\left( r_{i-1}N_{i:\mu} \log{ (\min \{ N_{i+1:\mu}, r_{i-1}N_i \}) }\right)$}
        \For{k = $1, \hdots , K$}
        \State Update weight matrix $\widehat{\bm{Y}}^k_{\widehat{i},\widehat{i}}$ as in \eqref{eq: ttsvd orthogonality recursive} \Comment{$\mathcal{O}\left(r_{i-1}r_i N_{z_i} + (r_{i-1}r_i)^2N_i + (r_{i-1} r_i)^2\right)$}
        \EndFor
        \ElsIf{$i = d$} 
        \State Compute final weight $\widehat{\bm{X}}_{\widehat{d-1}  d,\widehat{d-1}  d}$ as in \eqref{eq: ttsvd final weight} \Comment{$\mathcal{O}\left( K r_{d-1}^2 N_{z_d} \right)$}
        \State Cholesky factorization: $\widehat{\bm{H}}_{\widehat{d-1}  d,\widehat{d-1}  d} = \texttt{Cholesky}\left(\widehat{\bm{X}}_{\widehat{d-1}  d,\widehat{d-1}  d}\right)$  \Comment{$\mathcal{O}\left(r_{d-1}^2 N_d \right)$}
        \State $d$th rank reduction: $\widetilde{\bm{\Phi}}_{\widehat{d-1}d,\widehat{d}} \
        ,\bm{R}_{\widehat{d},\widehat{d}} = \texttt{RSVD}(\widehat{\bm{H}}_{\widehat{d-1}d,\widehat{d-1} d}\bm{\Phi}_{\widehat{d-1} d,\widehat{d}},\varepsilon)$ \Comment{$\mathcal{O}\left( r_{d-1}r_d^2 N_d  \right)$} 
        \State Inverse rescaling: $\bm{\Phi}_{\widehat{d-1}  d,\widehat{d}} = \widehat{\bm{H}}_{\widehat{d-1}  d,\widehat{d-1}  d}^{-1}\widetilde{\bm{\Phi}}_{\widehat{d-1}  d,\widehat{d}}$ \Comment{$\mathcal{O}\left( r^2_{d-1}r_d N_{z_d}\right)$}
        \EndIf
        \EndFor
        \State Compute $\bm{\Phi}_{\widehat{d}, t, \widehat{t}}$ as in Alg. \ref{alg: ttsvd} 
        \State \Return  $\bm{\Phi}_{\widehat{0},1,\widehat{1}}, \hdots, \bm{\Phi}_{\widehat{d-1},d,\widehat{d}}, \bm{\Phi}_{\widehat{d},t,\widehat{t}}$
        \EndFunction
    \end{algorithmic}
    \label{alg: ttsvd orthogonality recursive}
\end{algorithm} 

\begin{remark}
    Alg. \eqref{alg: ttsvd orthogonality recursive} can be extended to produce an $\bm{X}_{s,s}$-orthogonal \ac{tt} decomposition even when $\bm{X}_{s,s}$ does not admit the form \eqref{eq: generic form X}, but can instead be expressed in the \ac{tt} format. In this case, the orthogonalization procedure must leverage the \ac{als} framework \cite{Holtz2011,doi:10.1137/110833142}. Since our numerical tests are limited to rank-$K$ norm matrices, we do not further investigate this scenario.
\end{remark}

\begin{remark}
    In practice, a slight modification of \eqref{eq: crossnorm X} is often required to ensure the well-posedness of Alg. \ref{alg: ttsvd orthogonality recursive}. Let us define
    \begin{equation*}
        q = \arg\max\limits_{k=1,\hdots,K}\{\|\cdot\|_{\overset{d}{\underset{i=1}{\otimes}} \bm{Y}_{i,i}^k}\}.
    \end{equation*}
    We recall that, by definition of the factors $\bm{Y}_{i,i}^q$ in \eqref{eq: H1 norm matrix}, the matrix $\bm{Y}_{q,q}^q$ is the stiffness matrix on the $q$th 1-$d$ \ac{fe} space. Since this matrix might be singular depending on the boundary conditions of the problem at hand, in practice we choose the norm matrix 
    \begin{equation*}
        \widecheck{\bm{X}}_{s,s} = \overset{d}{\underset{i=1}{\otimes}}\widecheck{\bm{Y}}_{i,i}^q, \quad \text{where} \quad
        \widecheck{\bm{Y}}_{i,i}^q = \bm{M}_{i,i} + \bm{X}_{i,i} \quad \text{if} \quad i=q, \quad \widecheck{\bm{Y}}_{i,i}^q = \bm{M}_{i,i} \quad \text{otherwise},
    \end{equation*}
    which is more akin to a reasonable crossnorm on $\mathcal{V}_h$.
\end{remark}

\subsection{Empirical interpolation method}
\label{subs: eim}
Let us consider the reduced linear system \eqref{eq: space time reduced problem}. Since both the \ac{lhs} and \ac{rhs} are parameter-dependent, they must, in principle, be assembled during the online phase for any new choice of $\bm{\mu}$. A more feasible \ac{rom} involves two main steps. First, one seeks an affine approximant of these quantities, in which each term is expressed as a product of a parameter-independent basis and a parameter-dependent reduced coefficient, as shown in \eqref{eq: affine expansions}. Second, one solves the corresponding affine reduced system \eqref{eq: space time mdeim reduced problem}. To construct such affine approximations, a collocation method is typically employed -- most commonly an \ac{eim}. Notable examples include the \ac{deim} \cite{chaturantabut2010nonlinear} and its matrix counterpart, the \ac{mdeim} \cite{NEGRI2015431}. We recall the procedure for the space-time \ac{mdeim} in Alg. \ref{alg: mdeim}, as presented in \cite{MUELLER2024115767}. Here, the symbol $\bm{e}_s(i)$ indicates the $i$th vector of the $N_s$-dimensional canonical basis.
\begin{algorithm}%[H]
    \caption{\texttt{ST-MDEIM}: Given the tensor of space-time residual snapshots $ \bm{L}_{s,t,\mu}$ and the prescribed accuracy $ \varepsilon $, build the $\ell^2$-orthogonal bases $\bm{\Phi}_{s,\widehat{s}}^{\bm{L}}$ , $\bm{\Phi}_{t,\widehat{t}}^{\bm{L}}$ , and sampling matrices $\bm{P}_{s,\widehat{s}}^{\bm{L}} \in \{0,1\}^{N_s \times n_s^{\bm{L}}}$, $\bm{P}_{t,\widehat{t}}^{\bm{L}} \in \{0,1\}^{N_t \times n_t^{\bm{L}}}$.}
	\begin{algorithmic}[1]
    \Function{\texttt{ST-MDEIM}}{$ \bm{L}_{s,t,\mu}, \varepsilon $}
	\State Compute $\bm{\Phi}_{s,\widehat{s}}^{\bm{L}},\bm{\Phi}_{t,\widehat{t}}^{\bm{L}} = \texttt{TPOD}\left(\bm{L}_{s,t,\mu},\varepsilon\right)$ as in Alg. \ref{alg: tpod}
	\State Set $\bm{P}_{s,\widehat{s}}^{\bm{L}} = \left[ \bm{e}_s(\mathcal{i}^1)  \right]$, where $\mathcal{i}^1 = \argmax \vert \bm{\Phi}_{s,\widehat{s}}^{\bm{L}}\left[:,1\right] \vert$ \Comment{Start \texttt{EIM-LOOP}}
	\For{$ q \in \{2,\ldots,{n_s^{\bm{L}}}\}$} 
    \State Set $\bm{V}_s = \bm{\Phi}_{s,\widehat{s}}^{\bm{L}}\left[:,q\right]$, $\bm{V}_{s,\widehat{s}} = \bm{\Phi}_{s,\widehat{s}}^{\bm{L}}\left[:,1:q-1\right]$
	\State Compute residual $\bm{r}_s = \bm{V}_s - \bm{V}_{s,\widehat{s}} \left( \bm{P}_{\widehat{s}, s}^{\bm{L}} \bm{V}_{s,\widehat{s}} \right)^{-1} \bm{P}_{\widehat{s}, s}^{\bm{L}} \bm{V}_s$
	\State Update $\bm{P}_{s,\widehat{s}}^{\bm{L}} = \left[ \bm{P}_{s,\widehat{s}}^{\bm{L}}, \bm{e}_s(\mathcal{i}^q)\right]$, where $\mathcal{i}^q = \argmax \vert \bm{r}_s \vert$
	\EndFor \Comment{End \texttt{EIM-LOOP}}
    \State Compute $\bm{P}_{t,\widehat{t}}^{\bm{L}}$ in the same way 
    \State \Return  $\bm{\Phi}_{s,\widehat{s}}^{\bm{L}}$ , $\bm{\Phi}_{t,\widehat{t}}^{\bm{L}}$ , $\bm{P}_{s,\widehat{s}}^{\bm{L}}$ , $\bm{P}_{t,\widehat{t}}^{\bm{L}}$
    \EndFunction
	\end{algorithmic}
	\label{alg: mdeim}
\end{algorithm}	
This procedure is computationally demanding due to the \ac{tpod} used to compute $\bm{\Phi}_{s,\widehat{s}}^{\bm{L}}$ and $\bm{\Phi}_{t,\widehat{t}}^{\bm{L}}$ ; however, it can be executed entirely offline. After running Alg. \ref{alg: mdeim}, and given an online parameter $\bm{\mu} \in \mathcal{D}_{\mathrm{on}}$, we empirically interpolate $\bm{L}_{st}^{\mu}$ as 
\begin{equation}
    \label{eq: tpod mdeim approximation}
    \bm{L}_{st}^{\mu} \approx \widehat{\bm{L}}_{st}^{\mu} = 
    \left( \bm{\Phi}_{s,\widehat{s}}^{\bm{L}} \otimes \bm{\Phi}_{t,\widehat{t}}^{\bm{L}} \right)  \widehat{\bm{L}}_{\widehat{st}}^{\mu}, 
    \quad \text{where} \quad
    \widehat{\bm{L}}_{\widehat{st}}^{\mu} = \left( \bm{P}^{\bm{L}}_{\widehat{s}, s}\bm{\Phi}_{s,\widehat{s}}^{\bm{L}} \otimes \bm{P}^{\bm{L}}_{\widehat{t}, t}\bm{\Phi}_{t,\widehat{t}}^{\bm{L}} \right)^{-1} \left( \bm{P}^{\bm{L}}_{\widehat{s}, s} \otimes \bm{P}^{\bm{L}}_{\widehat{t}, t} \right) \bm{L}_{st}^{\mu}.
\end{equation}  
Here, $\bm{P}^{\bm{L}}_{s,\widehat{s}} \in \{0,1\}^{N_s \times r_s}$ is a matrix of interpolation indices, constructed iteratively as described in Alg. \ref{alg: mdeim}. As highlighted in the algorithm, we may define the function \texttt{EIM-LOOP} returning an interpolation matrix from a given basis. We recall the accuracy of the procedure: 
\begin{equation}
    \label{eq: tpod mdeim error bound}
    \norm{\bm{L}_{st}^{\mu} - \widehat{\bm{L}}_{st}^{\mu}}_2 
    \leq \varepsilon \Big | \Big | \left( \bm{P}^{\bm{L}}_{\widehat{s}, s}\bm{\Phi}_{s,\widehat{s}}^{\bm{L}} \otimes \bm{P}^{\bm{L}}_{\widehat{t}, t}\bm{\Phi}_{t,\widehat{t}}^{\bm{L}} \right)^{-1} \Big | \Big |_F \sqrt{\norm{\bm{L}_{s,t\mu}}^2_F+\norm{\widehat{\bm{L}}_{t, \widehat{s} \mu}}^2_F}.
\end{equation}
We refer to \cite{MUELLER2024115767} for a complete proof. \\ 

In \ac{ttrb}, we change the first line of Alg. \ref{alg: mdeim} with a call to a standard \ac{ttsvd}, as presented in Alg. \ref{alg: ttsvd} (we simply require a Euclidean orthogonality for the residual basis). In this scenario, we seek an approximation in the form
\begin{equation}
    \label{eq: tt mdeim approximation}
    \bm{L}_{st}^{\mu} \approx \widehat{\bm{L}}_{st}^{\mu} = 
    \left( \bm{\Phi}_{\widehat{0},1,\widehat{1}}^{\bm{L}} \cdots \bm{\Phi}_{\widehat{d},t,\widehat{t}}^{\bm{L}} \right)  \widehat{\bm{L}}_{\widehat{t}}^{\mu}, 
\end{equation}
where
\begin{equation}
    \label{eq: tt mdeim online}
    \widehat{\bm{L}}_{\widehat{t}}^{\mu} = \left( \bm{P}^{\bm{L}}_{\widehat{t}, st}(\bm{\Phi}_{\widehat{0},1,\widehat{1}}^{\bm{L}} \cdots \bm{\Phi}_{\widehat{d},t,\widehat{t}}^{\bm{L}}) \right)^{-1} \bm{P}^{\bm{L}}_{\widehat{t}, st} \bm{L}_{st}^{\mu} = \left(\bm{P}^{\bm{L}}_{\widehat{t}, st} \bm{\Phi}^{\bm{L}}_{st,\widehat{t}}\right)^{-1} \bm{P}^{\bm{L}}_{\widehat{t}, st} \bm{L}_{st}^{\mu}.
\end{equation}
Here, $\{\bm{\Phi}_{\widehat{i-1},i,\widehat{i}}^{\bm{L}}\}_i$ denotes the \ac{tt} decomposition of the residual, with ranks $r^{\bm{L}}_1,\hdots,r^{\bm{L}}_t$. Although these ranks differ from the ones associated with the decomposition \eqref{eq: ttrb basis}, we use the same reduced subscripts $\widehat{i}$ for simplicity. The subscript $\widehat{i}^{\bm{L}}$ is only introduced when needed to avoid ambiguity. Once the \ac{tt} cores $\bm{\Phi}^{\bm{L}}_{\widehat{i-1},i,\widehat{i}}$ have been computed for every $i$, the goal is to determine the matrix of interpolation indices $\bm{P}^{\bm{L}}_{st,\widehat{t}}$. For this purpose, it is sufficient to run a single iteration of the TT-cross-DEIM method proposed in \cite{dektor2024collocationmethodsnonlineardifferential}. We note that our work does not simply use this algorithm, but builds upon it. Notably, \cite{dektor2024collocationmethodsnonlineardifferential} does not assemble the interpolation matrix $\bm{P}^{\bm{L}}_{st,\widehat{t}}$, and we provide an error bound for the method in Thm. \ref{thm: ttmdeim error bound}. Therefore, we believe it is necessary to formally present the simplified version of TT-cross-DEIM we employ, adopting a notation consistent with the rest of the manuscript. As the procedure is quite involved, we describe it both graphically, through Figs. \ref{fig: forward sweep ttmdeim}-\ref{fig: backward sweep ttmdeim}, and algorithmically, via Alg. \ref{alg: ttmdeim}. For clarity and to better reflect the nature of the operations involved, we henceforth refer to this method as \ac{ttmdeim}. \\ 
We begin with the following definition. Given three matrices 
\begin{equation*}
    \bm{R}_{a,c} \in \R^{N_a \times N_c}, \quad 
    \bm{S}_{b,d} \in \R^{N_b \times N_d}, \quad 
    \bm{T}_{ab,cd} = \bm{R}_{a,c} \otimes \bm{S}_{b,d} \in \R^{N_{ab}\times N_{cd}},
\end{equation*}
we introduce the index mapping $\mathcal{K}$, which relates the entries of $\bm{T}_{ab,cd}$ to those of the Kronecker factors:
\begin{equation}
    \label{eq: def kronecker product map}
    \bm{T}_{ab,cd}[\mathcal{K}(i_a,i_b),\mathcal{K}(j_c,j_d)] = \bm{R}_{a,c}[i_a,j_c] \otimes \bm{S}_{b,d}[i_b,j_d].
\end{equation}
Naturally, the expression of $\mathcal{K}$ depends on the size of $\bm{R}_{a,c}$ and $\bm{S}_{b,d}$. However, for simplicity of notation, we omit this dependence when referring to $\mathcal{K}$. Note that $\mathcal{K}$ is bijective and therefore admits a well-defined inverse. Now, consider an interpolation matrix of the form:
\begin{equation*}
    \bm{P}_{ab,c} = \left[\bm{e}_{ab}(\mathcal{i}^1)| \hdots | \bm{e}_{ab}(\mathcal{i}^{N_c})\right] \in \{0,1\}^{N_{ab} \times N_c}.
\end{equation*}
Using $\mathcal{K}^{-1}$, we can extract from $\bm{P}_{ab,c}$ the smaller interpolation matrices 
\begin{equation*}
    \bm{P}_{a,c} = \left[\bm{e}_{a}(\mathcal{i}^1)| \hdots | \bm{e}_{a}(\mathcal{i}^{N_c})\right] \in \{0,1\}^{N_{a} \times N_c}, \quad 
    \bm{P}_{b,c} = \left[\bm{e}_{b}(\mathcal{i}^1)| \hdots | \bm{e}_{b}(\mathcal{i}^{N_c})\right] \in \{0,1\}^{N_{b} \times N_c} \quad 
\end{equation*}
as follows: 
\begin{equation*}
    (\bm{P}_{a,c},\bm{P}_{b,c}) = \mathcal{K}^{-1} \left(\bm{P}_{ab,c}\right).
\end{equation*}
The expression above involves a slight abuse of notation, as we are effectively \textit{broadcasting} the operation $\mathcal{K}^{-1}$ over the indices encoded in $\bm{P}_{ab,c}$.
\begin{algorithm}
    \caption{\texttt{TT-MDEIM}: Given the tensor of space-time residual snapshots $ \bm{L}_{1:\mu}$ and the prescribed accuracy $ \varepsilon $, build the $\ell^2$-orthogonal \ac{tt} decomposition $\bm{\Phi}_{\widehat{0},1,\widehat{1}}^{\bm{L}},\hdots,\bm{\Phi}_{\widehat{d},t,\widehat{t}}^{\bm{L}}$ , and sampling matrices $\bm{P}_{1,\widehat{t}}^{\bm{L}} \in \{0,1\}^{N_1 \times r_t^{\bm{L}}},\hdots,\bm{P}_{t,\widehat{t}}^{\bm{L}} \in \{0,1\}^{N_t \times r_t^{\bm{L}}}$.}
	\begin{algorithmic}[1]
    \Function{\texttt{TT-MDEIM}}{$ \bm{L}_{1:\mu}, \varepsilon $}
	\State Compute $\bm{\Phi}_{\widehat{0},1,\widehat{1}}^{\bm{L}},\hdots,\bm{\Phi}_{\widehat{d},t,\widehat{t}}^{\bm{L}} = \texttt{TT-SVD}\left(\bm{L}_{1:\mu},\varepsilon\right)$ as in Alg. \ref{alg: ttsvd}
    \State Set $\widetilde{\bm{\Phi}}^{\bm{L}}_{\widehat{0}1,\widehat{1}} = \bm{\Phi}^{\bm{L}}_{1,\widehat{1}}$, and $\widetilde{\bm{P}}^{\bm{L}}_{\widehat{0}1,\widehat{1}} = \texttt{EIM-LOOP}\left(\widetilde{\bm{\Phi}}^{\bm{L}}_{\widehat{0}1,\widehat{1}}\right)$
	\For{$ i \in \{1,\hdots,d\}$} \Comment{Forward sweep}
    \State Interpolate \ac{tt} core $\widetilde{\bm{\Phi}}^{\bm{L}}_{\widehat{i},\widehat{i}} = \widetilde{\bm{P}}^{\bm{L}}_{\widehat{i},\widehat{i-1}i}\widetilde{\bm{\Phi}}^{\bm{L}}_{\widehat{i-1}i,\widehat{i}}$ 
    \State Update $\widetilde{\bm{\Phi}}^{\bm{L}}_{\widehat{i}(i+1),\widehat{i+1}} = \widetilde{\bm{\Phi}}^{\bm{L}}_{\widehat{i},\widehat{i}} \bm{\Phi}^{\bm{L}}_{\widehat{i},i+1,\widehat{i+1}}$
    \State Compute interpolation matrix $\widetilde{\bm{P}}^{\bm{L}}_{\widehat{i}(i+1),\widehat{i+1}} = \texttt{EIM-LOOP}\left(\widetilde{\bm{\Phi}}^{\bm{L}}_{\widehat{i}(i+1),\widehat{i+1}}\right)$
	\EndFor
    % \State Compute interpolation matrix $\widetilde{\bm{P}}^{\bm{L}}_{\widehat{d}t,\widehat{d}} = \texttt{EIM-LOOP}\left(\widetilde{\bm{\Phi}}^{\bm{L}}_{\widehat{d}t,\widehat{t}}\right)$
    \For{$ i \in \{d,\hdots,1\}$} \Comment{Backward sweep}
    \State Split $\left(\widetilde{\bm{P}}^{\bm{L}}_{\widehat{i},\widehat{t}} \ , \bm{P}^{\bm{L}}_{i+1,\widehat{t}}\right) = \mathcal{K}^{-1}(\widetilde{\bm{P}}^{\bm{L}}_{\widehat{i}(i+1),\widehat{t}})$
    \State Compute $\widetilde{\bm{P}}^{\bm{L}}_{\widehat{i-1}i,\widehat{t}} = \widetilde{\bm{P}}^{\bm{L}}_{\widehat{i-1}i,\widehat{i}}\widetilde{\bm{P}}^{\bm{L}}_{\widehat{i},\widehat{t}}$
	\EndFor
    \State Set $\bm{P}^{\bm{L}}_{1,\widehat{t}} = \widetilde{\bm{P}}^{\bm{L}}_{\widehat{0}1,\widehat{t}}$
    \State Return $\bm{\Phi}_{\widehat{0},1,\widehat{1}}^{\bm{L}},\hdots,\bm{\Phi}_{\widehat{d},t,\widehat{t}}^{\bm{L}}$ , and $\bm{P}_{1,\widehat{t}}^{\bm{L}},\hdots,\bm{P}_{t,\widehat{t}}^{\bm{L}}$
    \EndFunction
	\end{algorithmic}
	\label{alg: ttmdeim}
\end{algorithm}	
\begin{figure}[t]
    \centering 
    \tikzset{every picture/.style={line width=0.75pt}} %set default line width to 0.75pt        

\tikzset{every picture/.style={line width=0.75pt}} %set default line width to 0.75pt   
    \begin{tikzpicture}[x=0.75pt,y=0.75pt,yscale=-1,xscale=1]
    % uncomment if require: \path (0,205); %set diagram left start at 0, and has height of 205
    %Shape: Rectangle [id:dp7163837732766356] 
    \draw [fill={rgb, 255:red, 182; green, 208; blue, 238 }  ,fill opacity=1 ]  (160.76,50.25) -- (184.42,50.25) -- (107.67,127) -- (84.01,127) -- cycle ;
    %Straight Lines [id:da25393908865360826] 
    \draw    (111.42,135.25) -- (78.42,135.25) ;
    \draw [shift={(76.42,135.25)}, rotate = 360] [color={rgb, 255:red, 0; green, 0; blue, 0 }  ][line width=0.75]    (10.93,-3.29) .. controls (6.95,-1.4) and (3.31,-0.3) .. (0,0) .. controls (3.31,0.3) and (6.95,1.4) .. (10.93,3.29)   ;
    %Straight Lines [id:da243945082727962] 
    \draw    (111.42,135.25) -- (195,51.66) ;
    \draw [shift={(196.42,50.25)}, rotate = 135] [color={rgb, 255:red, 0; green, 0; blue, 0 }  ][line width=0.75]    (10.93,-3.29) .. controls (6.95,-1.4) and (3.31,-0.3) .. (0,0) .. controls (3.31,0.3) and (6.95,1.4) .. (10.93,3.29)   ;
    %Shape: Rectangle [id:dp00328396953479404] 
    \draw [fill={rgb, 255:red, 182; green, 208; blue, 238 }  ,fill opacity=1 ]  (338.76,37.25) -- (387.42,37.25) -- (264.42,160.25) -- (215.76,160.25) -- cycle ;
    %Straight Lines [id:da1269877561799747] 
    \draw    (267.42,169.25) -- (392.01,43.67) ;
    \draw [shift={(393.42,42.25)}, rotate = 134.77] [color={rgb, 255:red, 0; green, 0; blue, 0 }  ][line width=0.75]    (10.93,-3.29) .. controls (6.95,-1.4) and (3.31,-0.3) .. (0,0) .. controls (3.31,0.3) and (6.95,1.4) .. (10.93,3.29)   ;
    %Straight Lines [id:da6407911948529523] 
    \draw    (267.42,169.25) -- (208.42,168.28) ;
    \draw [shift={(206.42,168.25)}, rotate = 0.94] [color={rgb, 255:red, 0; green, 0; blue, 0 }  ][line width=0.75]    (10.93,-3.29) .. controls (6.95,-1.4) and (3.31,-0.3) .. (0,0) .. controls (3.31,0.3) and (6.95,1.4) .. (10.93,3.29)   ;
    %Shape: Rectangle [id:dp7225987686708926] 
    \draw  [fill={rgb, 255:red, 182; green, 208; blue, 238 }  ,fill opacity=1 ] (506.42,70.25) -- (544.42,70.25) -- (451.42,163.25) -- (413.42,163.25) -- cycle ;
    %Straight Lines [id:da48999359956151123] 
    \draw    (459.42,173.25) -- (565,67.66) ;
    \draw [shift={(566.42,66.25)}, rotate = 135] [color={rgb, 255:red, 0; green, 0; blue, 0 }  ][line width=0.75]    (10.93,-3.29) .. controls (6.95,-1.4) and (3.31,-0.3) .. (0,0) .. controls (3.31,0.3) and (6.95,1.4) .. (10.93,3.29)   ;
    %Straight Lines [id:da01394089296773704] 
    \draw    (459.42,173.25) -- (410.42,173.25) ;
    \draw [shift={(408.42,173.25)}, rotate = 360] [color={rgb, 255:red, 0; green, 0; blue, 0 }  ][line width=0.75]    (10.93,-3.29) .. controls (6.95,-1.4) and (3.31,-0.3) .. (0,0) .. controls (3.31,0.3) and (6.95,1.4) .. (10.93,3.29)   ;
    %Shape: Rectangle [id:dp199463797133431] 
    \draw   [color=red  ,draw opacity=1 ][fill=red  ,fill opacity=1 ] (103.76,107.25) -- (127.42,107.25) -- (122.42,112.25) -- (98.76,112.25) -- cycle ;
    %Shape: Rectangle [id:dp8831154520985105] 
    \draw   [color=Orchid  ,draw opacity=1 ][fill=Orchid  ,fill opacity=1 ] (257.76,118.25) -- (306.42,118.25) -- (301.42,123.25) -- (252.76,123.25) -- cycle ;
    %Shape: Rectangle [id:dp6193464028206107] 
    \draw   [color=Orchid  ,draw opacity=1 ][fill=Orchid  ,fill opacity=1 ] (285.76,90.25) -- (334.42,90.25) -- (329.42,95.25) -- (280.76,95.25) -- cycle ;
    %Shape: Rectangle [id:dp599234302966516] 
    \draw   [color=Orchid  ,draw opacity=1 ][fill=Orchid  ,fill opacity=1 ] (333.76,42.25) -- (382.42,42.25) -- (377.42,47.25) -- (328.76,47.25) -- cycle ;
    %Shape: Rectangle [id:dp7825537935494685] 
    \draw   [color=blue  ,draw opacity=1 ][fill=blue  ,fill opacity=1 ] (427.42,149.25) -- (465.42,149.25) -- (460.42,154.25) -- (422.42,154.25) -- cycle ;
    %Shape: Rectangle [id:dp14318214912709715] 
    \draw   [color=blue  ,draw opacity=1 ][fill=blue  ,fill opacity=1 ] (463.42,113.25) -- (501.42,113.25) -- (496.42,118.25) -- (458.42,118.25) -- cycle ;
    %Curve Lines [id:da1910414260764436] 
    \draw  [color=red][dash pattern={on 0.84pt off 2.51pt}]  (44.42,130.25) .. controls (194.9,285.19) and (381.64,125.51) .. (397,63.11) ;
    \draw [color=red][shift={(397.42,61.25)}, rotate = 101.13] [color=red  ][line width=0.75]    (10.93,-3.29) .. controls (6.95,-1.4) and (3.31,-0.3) .. (0,0) .. controls (3.31,0.3) and (6.95,1.4) .. (10.93,3.29)   ;
    %Curve Lines [id:da7151515583630249] 
    \draw  [color=Orchid][dash pattern={on 0.84pt off 2.51pt}]  (235.42,58.25) .. controls (209.55,0.04) and (566.82,-23.97) .. (570.39,61.48) ;
    \draw [color=Orchid][shift={(570.42,62.75)}, rotate = 270] [color=Orchid  ][line width=0.75]    (10.93,-3.29) .. controls (6.95,-1.4) and (3.31,-0.3) .. (0,0) .. controls (3.31,0.3) and (6.95,1.4) .. (10.93,3.29)   ;
    %Shape: Ellipse [id:dp3663562551103451] 
    \draw  [color=white,fill=white] (162.42,194.83) .. controls (162.42,187.1) and (174.28,180.83) .. (188.92,180.83) .. controls (203.55,180.83) and (215.42,187.1) .. (215.42,194.83) .. controls (215.42,202.57) and (203.55,208.83) .. (188.92,208.83) .. controls (174.28,208.83) and (162.42,202.57) .. (162.42,194.83) -- cycle ;
    %Shape: Ellipse [id:dp02304427826669697] 
    \draw  [color=white,fill=white] (391.42,5.83) .. controls (391.42,-1.9) and (403.28,-8.17) .. (417.92,-8.17) .. controls (432.55,-8.17) and (444.42,-1.9) .. (444.42,5.83) .. controls (444.42,13.57) and (432.55,19.83) .. (417.92,19.83) .. controls (403.28,19.83) and (391.42,13.57) .. (391.42,5.83) -- cycle ;

    % Text Node
    \draw (196.42,53.25) node [anchor=north west][inner sep=0.75pt]  [font=\scriptsize] [align=left] {{$1$}};
    % Text Node
    \draw (78.42,138.25) node [anchor=north west][inner sep=0.75pt]  [font=\scriptsize] [align=left] {{$\widehat{1}$}};
    % Text Node
    \draw (393.42,45.25) node [anchor=north west][inner sep=0.75pt]  [font=\scriptsize] [align=left] {{$\textcolor{red}{\widehat{1}}2$}};
    % Text Node
    \draw (207.42,171.25) node [anchor=north west][inner sep=0.75pt] [font=\scriptsize]  [align=left] {{$\widehat{2}$}};
    % Text Node
    \draw (411,177) node [anchor=north west][inner sep=0.75pt]  [font=\scriptsize] [align=left] {{$\widehat{t}$}};
    % Text Node
    \draw (569,62) node [anchor=north west][inner sep=0.75pt] [font=\scriptsize]  [align=left] {{$\textcolor{Orchid}{\widehat{2}}t$}};
    % Text Node
    \draw (155,28) node [anchor=north west][inner sep=0.75pt] [align=left] {$\bm{\Phi}_{\widehat{0},1,\widehat{1}}$};
    % Text Node
    \draw (341,11) node [anchor=north west][inner sep=0.75pt]   [align=left] {$\widetilde{\bm{\Phi}}_{\widehat{0},\textcolor{red}{\widehat{1}}2,\widehat{2}}$};
    % Text Node
    \draw (506,45) node [anchor=north west][inner sep=0.75pt]   [align=left] {$\widetilde{\bm{\Phi}}_{\widehat{0},\textcolor{Orchid}{\widehat{2}}t,\widehat{t}}$};
    % Text Node
    \draw (84,102) node [anchor=north west][inner sep=0.75pt]  [font=\scriptsize] [align=left] {$\textcolor{red}{\widetilde{i}_1^1}$};
    % Text Node
    \draw (238,113) node [anchor=north west][inner sep=0.75pt]  [font=\scriptsize] [align=left] {$\textcolor{Orchid}{\widetilde{i}_2^1}$};
    % Text Node
    \draw (264,86) node [anchor=north west][inner sep=0.75pt]  [font=\scriptsize] [align=left] {$\textcolor{Orchid}{\widetilde{i}_2^2}$};
    % Text Node
    \draw (313,38) node [anchor=north west][inner sep=0.75pt]  [font=\scriptsize] [align=left] {$\textcolor{Orchid}{\widetilde{i}_2^3}$};
    % Text Node
    \draw (406,145) node [anchor=north west][inner sep=0.75pt]  [font=\scriptsize] [align=left] {$\textcolor{blue}{\widetilde{i}_t^1}$};
    % Text Node
    \draw (442,109) node [anchor=north west][inner sep=0.75pt]  [font=\scriptsize] [align=left] {$\textcolor{blue}{\widetilde{i}_t^2}$};
    % Text Node
    \draw (24,106) node [anchor=north west][inner sep=0.75pt]   [align=left] {$\widetilde{\bm{\Phi}}_{\widehat{0},\textcolor{red}{\widehat{1}},\widehat{1}}$};
    % Text Node
    \draw (216,63) node [anchor=north west][inner sep=0.75pt]   [align=left] {$\widetilde{\bm{\Phi}}_{\widehat{0},\textcolor{Orchid}{\widehat{2}},\widehat{2}}$};
    % Text Node
    \draw (380,115) node [anchor=north west][inner sep=0.75pt]   [align=left] {$\widetilde{\bm{\Phi}}_{\widehat{0},\textcolor{blue}{\widehat{t}},\widehat{t}}$};
    % Text Node
    \draw (168,183) node [anchor=north west][inner sep=0.75pt]   [align=left] {$ \ \widetilde{\bm{\Phi}}_{\widehat{1},2,\widehat{2}}$};
    % Text Node
    \draw (398,-5.17) node [anchor=north west][inner sep=0.75pt]   [align=left] {$ \ \widetilde{\bm{\Phi}}_{\widehat{2},t,\widehat{t}}$};
    \end{tikzpicture}
    \vspace*{-2.2cm}
    \caption{\ac{ttmdeim} forward sweep, case $d=2$.}
    \label{fig: forward sweep ttmdeim}
\end{figure}
\begin{figure}[t]
    \centering

\tikzset{every picture/.style={line width=0.75pt}} %set default line width to 0.75pt        

    \begin{tikzpicture}[x=0.75pt,y=0.75pt,yscale=-1,xscale=1]
    %uncomment if require: \path (0,300); %set diagram left start at 0, and has height of 300

    %Shape: Rectangle [id:dp6388826890850337] 
    \draw (183.42,131.25) -- (212.42,131.25) -- (212.42,154.25) -- (183.42,154.25) -- cycle ;
    %Straight Lines [id:da1536228967461305] 
    \draw    (154.42,143.75) -- (184,143.75) ;
    %Shape: Ellipse [id:dp047856229280469065] 
    \draw  [color={rgb, 255:red, 182; green, 208; blue, 238 }, fill={rgb, 255:red, 182; green, 208; blue, 238 }  ,fill opacity=1] (116.42,143.75) .. controls (116.42,135.19) and (124.92,128.25) .. (135.42,128.25) .. controls (145.91,128.25) and (154.42,135.19) .. (154.42,143.75) .. controls (154.42,152.31) and (145.91,159.25) .. (135.42,159.25) .. controls (124.92,159.25) and (116.42,152.31) .. (116.42,143.75) -- cycle ;
    %Shape: Ellipse [id:dp22728309762500354] 
    \draw  [color={rgb, 255:red, 182; green, 208; blue, 238 }, fill={rgb, 255:red, 182; green, 208; blue, 238 }  ,fill opacity=1] (244.42,165.25) .. controls (244.42,156.69) and (252.92,149.75) .. (263.42,149.75) .. controls (273.91,149.75) and (282.42,156.69) .. (282.42,165.25) .. controls (282.42,173.81) and (273.91,180.75) .. (263.42,180.75) .. controls (252.92,180.75) and (244.42,173.81) .. (244.42,165.25) -- cycle ;
    %Straight Lines [id:da4775104003532836] 
    \draw    (213,141.25) -- (244,121.75) ;
    %Straight Lines [id:da14783408074317006] 
    \draw    (213,141.25) -- (244,165.25) ;
    %Shape: Rectangle [id:dp14225960436504725] 
    \draw  (311,109.25) -- (340.42,109.25) -- (340.42,132.25) -- (311.42,132.25) -- cycle ;
    %Straight Lines [id:da6649054800867896] 
    \draw    (282,121.75) -- (311.5,121.75) ;
    %Shape: Ellipse [id:dp3176766861503494] 
    \draw  [color={rgb, 255:red, 223; green, 205; blue, 239 }, fill={rgb, 255:red, 223; green, 205; blue, 239 }  ,fill opacity=1] (244.42,121.75) .. controls (244.42,113.19) and (252.92,106.25) .. (263.42,106.25) .. controls (273.91,106.25) and (282.42,113.19) .. (282.42,121.75) .. controls (282.42,130.31) and (273.91,137.25) .. (263.42,137.25) .. controls (252.92,137.25) and (244.42,130.31) .. (244.42,121.75) -- cycle ;
    %Straight Lines [id:da2854038210553618] 
    \draw    (340,120.25) -- (372,100.75) ;
    %Shape: Ellipse [id:dp3189146699982449] 
    \draw  [color={rgb, 255:red, 223; green, 205; blue, 239 },fill={rgb, 255:red, 223; green, 205; blue, 239 }] (372.42,100.75) .. controls (372.42,92.19) and (380.92,85.25) .. (391.42,85.25) .. controls (401.91,85.25) and (410.42,92.19) .. (410.42,100.75) .. controls (410.42,109.31) and (401.91,116.25) .. (391.42,116.25) .. controls (380.92,116.25) and (372.42,109.31) .. (372.42,100.75) -- cycle ;
    %Shape: Ellipse [id:dp47432378971631184] 
    \draw  [fill={rgb, 255:red, 231; green, 168; blue, 176 },color={rgb, 255:red, 231; green, 168; blue, 176 }] (371.42,144.25) .. controls (371.42,135.69) and (379.92,128.75) .. (390.42,128.75) .. controls (400.91,128.75) and (409.42,135.69) .. (409.42,144.25) .. controls (409.42,152.81) and (400.91,159.75) .. (390.42,159.75) .. controls (379.92,159.75) and (371.42,152.81) .. (371.42,144.25) -- cycle ;
    %Straight Lines [id:da268309555866023] 
    \draw    (340,120.25) -- (371.42,144.25) ;

    % Text Node
    \draw (130,135.25) node [anchor=north west][inner sep=0.75pt]   [align=left] {$\textcolor{blue}{\widetilde{\mathcal{i}}_t^q}$};
    % Text Node
    \draw (184,134.25) node [anchor=north west][inner sep=0.75pt]   [align=left] {$\mathcal{K}^{-1} $};
    % Text Node
    \draw (258,156.25) node [anchor=north west][inner sep=0.75pt]   [align=left] {$\textcolor{blue}{\mathcal{i}_t^q}$};
    % Text Node
    \draw (258,113.25) node [anchor=north west][inner sep=0.75pt]   [align=left] {$\textcolor{Orchid}{\widetilde{\mathcal{i}}_2^q}$};
    % Text Node
    \draw (312,112.25) node [anchor=north west][inner sep=0.75pt]   [align=left] {$\mathcal{K}^{-1} $};
    % Text Node
    \draw (385,92.25) node [anchor=north west][inner sep=0.75pt]   [align=left] {$\textcolor{Orchid}{\mathcal{i}_2^q}$};
    % Text Node
    \draw (384,135.25) node [anchor=north west][inner sep=0.75pt]   [align=left] {$\textcolor{red}{\mathcal{i}_1^q}$};
    \end{tikzpicture}
    \caption{\ac{ttmdeim} backward sweep, case $d=2$.}
    \label{fig: backward sweep ttmdeim}
\end{figure}
After executing Alg. \ref{alg: ttmdeim}, we can recover the space-time interpolation matrix $\bm{P}_{st,\widehat{t}}$ by applying $\mathcal{K}$ to the interpolation matrices $\bm{P}_{1,\widehat{t}}$ , $\hdots,\bm{P}_{t,\widehat{t}}$. For example, if $d=2$, we have:
\begin{equation}
    \bm{P}_{st,\widehat{t}} = \mathcal{K}\left(\mathcal{K}\left(\bm{P}_{1,\widehat{t}},\bm{P}_{2,\widehat{t}}\right),\bm{P}_{t,\widehat{t}}\right).
\end{equation}
As before, this expression involves an abuse of notation and should be interpreted in a broadcasting sense.

\begin{remark}
    \label{rmk: remark ttmdeim}
    The quantity $\bm{P}_{\widehat{t},st}^{\bm{L}}\bm{\Phi}_{st,\widehat{t}}^{\bm{L}} \ $, which is required for the \ac{ttmdeim} approximation (see \eqref{eq: tt mdeim online}), can be efficiently computed as a by-product of Alg. \ref{alg: ttmdeim}. Specifically, we have that 
    \begin{equation}
        \label{eq: forward sweep property}
        \bm{P}_{\widehat{t},st}^{\bm{L}}\bm{\Phi}_{st,\widehat{t}}^{\bm{L}} = \widetilde{\bm{P}}^{\bm{L}}_{\widehat{t},\widehat{d}t}\widetilde{\bm{\Phi}}^{\bm{L}}_{\widehat{d}t,\widehat{t}} \ ,
    \end{equation}
    where both $\widetilde{\bm{P}}^{\bm{L}}_{\widehat{t},\widehat{d}t}$ and $\widetilde{\bm{\Phi}}^{\bm{L}}_{\widehat{d}t,\widehat{t}}$ are available at the final iteration of the forward sweep. Rather than presenting a rigorous proof of \eqref{eq: forward sweep property}, we refer to Fig. \ref{fig: forward sweep ttmdeim}, which provides an illustration of this statement in the case $d=2$.
\end{remark}
We refer to \cite{dektor2024collocationmethodsnonlineardifferential} for a discussion on the computational cost of the method. This analysis is omitted here, as the cost is negligible compared to that of computing the \ac{tt} decomposition. We now present a theorem detailing the accuracy of the \ac{ttmdeim} method.
\begin{theorem}
    \label{thm: ttmdeim error bound}
    Let $\bm{P}_{st,\widehat{t}}^{\bm{L}}$ be computed by applying the procedure above on $\bm{\Phi}_{st,\widehat{t}}^{\bm{L}}$ . The following holds:
    \begin{equation}
        \label{eq: ttmdeim error bound}
        \norm{\bm{L}_{st}^{\mu} - \widehat{\bm{L}}_{st}^{\mu}}_2 
        \leq \varepsilon \sqrt{d+1} \chi^{\bm{L}} \norm{\bm{L}_{st,\mu}}_F ; \qquad 
        \chi^{\bm{L}} = \norm{ \left( \widetilde{\bm{P}}^{\bm{L}}_{\widehat{t},\widehat{2}t}\widetilde{\bm{\Phi}}^{\bm{L}}_{\widehat{2}t,\widehat{t}} \right)^{-1}}_F.
    \end{equation}
    \begin{proof}
        Firstly, note that the matrices $\widetilde{\bm{\Phi}}_{\widehat{1},\widehat{1}}^{\bm{L}} \ ,\hdots, \widetilde{\bm{\Phi}}_{\widehat{d},\widehat{d}}^{\bm{L}} \ , \widetilde{\bm{\Phi}}_{\widehat{t},\widehat{t}}^{\bm{L}} \ $ are full-rank (see \cite{dektor2024collocationmethodsnonlineardifferential}, Lemma 2.1). Thus, Lemma 3.2 in \cite{chaturantabut2010nonlinear} applies:
        \begin{equation}
            \label{eq: eq 1 proof 2}
            \norm{\bm{L}_{st}^{\mu} - \widehat{\bm{L}}_{st}^{\mu}}_2 
            \leq \norm{\left( \bm{P}_{\widehat{t},st}^{\bm{L}} \bm{\Phi}_{st,\widehat{t}}^{\bm{L}} \right)^{-1}}_F \norm{\left(\bm{I}_{st,st} - \bm{\Phi}_{st,\widehat{t}}^{\bm{L}}\bm{\Phi}_{\widehat{t},st}^{\bm{L}} \right)\bm{L}_{st,\mu}}_F = \chi^{\bm{L}} \norm{\left(\bm{I}_{st,st} - \bm{\Phi}_{st,\widehat{t}}^{\bm{L}}\bm{\Phi}_{\widehat{t},st}^{\bm{L}} \right)\bm{L}_{st,\mu}}_F.
        \end{equation}
        The equality above was derived using Remark \ref{rmk: remark ttmdeim}. Invoking Thm. \ref{thm: ttsvd accuracy}, we have that 
        \begin{equation}
            \norm{\left(\bm{I}_{st,st} - \bm{\Phi}_{st,\widehat{t}}^{\bm{L}}\bm{\Phi}_{\widehat{t},st}^{\bm{L}} \right)\bm{L}_{st,\mu}}_F \leq \varepsilon \sqrt{d+1} \norm{\bm{L}_{st,\mu}}_F,
        \end{equation}
        which proves the statement \eqref{eq: ttmdeim error bound} of the theorem.
    \end{proof}
\end{theorem}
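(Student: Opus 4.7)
The plan is to reduce the statement to two ingredients already in hand: the standard pointwise DEIM error bound of Chaturantabut–Sorensen, and the TT-SVD accuracy estimate of Theorem 1. The DEIM bound gives, for any interpolation index matrix $\bm{P}^{\bm{L}}_{st,\widehat{t}}$ and any basis $\bm{\Phi}^{\bm{L}}_{st,\widehat{t}}$ such that $\bm{P}^{\bm{L}}_{\widehat{t},st}\bm{\Phi}^{\bm{L}}_{st,\widehat{t}}$ is invertible, that $\|\bm{L}^{\mu}_{st}-\widehat{\bm{L}}^{\mu}_{st}\|_2 \leq \|(\bm{P}^{\bm{L}}_{\widehat{t},st}\bm{\Phi}^{\bm{L}}_{st,\widehat{t}})^{-1}\|_F\,\|(\bm{I}_{st,st}-\bm{\Phi}^{\bm{L}}_{st,\widehat{t}}\bm{\Phi}^{\bm{L}}_{\widehat{t},st})\bm{L}^{\mu}_{st}\|_F$. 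I would cite this as \cite{chaturantabut2010nonlinear}, Lemma 3.2.

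Next, I would handle the two factors separately. For the conditioning factor, I invoke Remark \ref{rmk: remark ttmdeim}, which identifies $\bm{P}^{\bm{L}}_{\widehat{t},st}\bm{\Phi}^{\bm{L}}_{st,\widehat{t}}$ with the small matrix $\widetilde{\bm{P}}^{\bm{L}}_{\widehat{t},\widehat{d}t}\widetilde{\bm{\Phi}}^{\bm{L}}_{\widehat{d}t,\widehat{t}}$ produced at the final forward-sweep iteration, thereby matching $\chi^{\bm{L}}$. For the projection factor, since the basis $\bm{\Phi}^{\bm{L}}_{st,\widehat{t}}$ is the TT-SVD output with tolerance $\varepsilon$, Theorem \ref{thm: ttsvd accuracy} yields $\|(\bm{I}_{st,st}-\bm{\Phi}^{\bm{L}}_{st,\widehat{t}}\bm{\Phi}^{\bm{L}}_{\widehat{t},st})\bm{L}^{\mu}_{st}\|_F \leq \varepsilon\sqrt{d+1}\,\|\bm{L}_{st,\mu}\|_F$ once one observes that, for a fixed parameter column, the Frobenius norm coincides with the 2-norm and is bounded above by the Frobenius norm of the full snapshot tensor. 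Combining the two factors yields the claim.

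Before either step is legitimate, one must verify that $\bm{P}^{\bm{L}}_{\widehat{t},st}\bm{\Phi}^{\bm{L}}_{st,\widehat{t}}$ is invertible; equivalently, via Remark \ref{rmk: remark ttmdeim}, that $\widetilde{\bm{P}}^{\bm{L}}_{\widehat{t},\widehat{d}t}\widetilde{\bm{\Phi}}^{\bm{L}}_{\widehat{d}t,\widehat{t}}$ is. This reduces to showing that each intermediate matrix $\widetilde{\bm{\Phi}}^{\bm{L}}_{\widehat{i-1}i,\widehat{i}}$ produced during the forward sweep has full column rank, which is precisely the content of \cite{dektor2024collocationmethodsnonlineardifferential}, Lemma 2.1, and which I would cite rather than reprove.

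The main obstacle, as I see it, is Remark \ref{rmk: remark ttmdeim}, i.e. the identity $\bm{P}^{\bm{L}}_{\widehat{t},st}\bm{\Phi}^{\bm{L}}_{st,\widehat{t}} = \widetilde{\bm{P}}^{\bm{L}}_{\widehat{t},\widehat{d}t}\widetilde{\bm{\Phi}}^{\bm{L}}_{\widehat{d}t,\widehat{t}}$. The authors defer this to a picture in the case $d=2$, but for the proof to be self-contained one really wants an inductive argument along the forward sweep: at iteration $i$, the partial product of contracted cores evaluated at the accumulated indices equals the corresponding restriction of the full TT product. Everything else — the DEIM inequality, the TT-SVD bound, the full-rank property — is borrowed from existing results, so the genuine work lies in making this bookkeeping identity rigorous in the general $d$ case.
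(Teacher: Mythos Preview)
Your proposal is correct and follows essentially the same route as the paper: cite \cite{dektor2024collocationmethodsnonlineardifferential}, Lemma 2.1 for the full-rank property, apply the DEIM bound of \cite{chaturantabut2010nonlinear}, Lemma 3.2, use Remark~\ref{rmk: remark ttmdeim} to rewrite the conditioning factor as $\chi^{\bm{L}}$, and invoke Theorem~\ref{thm: ttsvd accuracy} for the projection error. Your additional comment that the forward-sweep identity in Remark~\ref{rmk: remark ttmdeim} deserves an inductive proof for general $d$ is well taken --- the paper itself only illustrates it for $d=2$ via a figure --- but this is exactly the level of rigor the authors adopt, so your argument matches theirs.
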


Notably, the accuracy estimate \eqref{eq: ttmdeim error bound} is identical to the one that would be obtained by first explicitly assembling $\bm{\Phi}_{st,\widehat{t}}$ and then executing the \emph{for} loop in Alg. \ref{alg: mdeim}. 

\subsection{Approximation of the Jacobians}
\label{subs: approximation Jacobians}

To implement the \ac{ttmdeim} approximation for space-time Jacobians, we first require a ``split axes'' format for representing these quantities. We assume for simplicity that the sparsity pattern of the Jacobians does not vary for different values of $\bm{\mu}$. Let us momentarily consider a steady-state Jacobian $\bm{K}_{s,s}^{\mu}$. Since we operate within a Cartesian framework, we can define an index mapping
\begin{equation}
    \label{eq: tensor product sparsity}
    \mathcal{I}_z: \N(N_{z_1}) \times \cdots \times \N(N_{z_d}) \longrightarrow \N(N_z) 
\end{equation}
which associates a global index corresponding to a nonzero entry of $\bm{K}_{s,s}^{\mu}$ with a tuple of indices corresponding to nonzero entries of the 1-d Jacobians 
\begin{equation*}
    \bm{K}_{1,1}^{\mu}, \hdots, \bm{K}_{d,d}^{\mu}.
\end{equation*}
Note that, in general, there is no direct relationship between the entries of $\bm{K}_{s,s}^{\mu}$ and those of the 1-d Jacobians, in the sense that usually
\begin{equation*}
    \bm{K}_{1,1}^{\mu} \otimes \cdots \otimes \bm{K}_{d,d}^{\mu} \neq \bm{K}_{s,s}^{\mu}.
\end{equation*}
Nonetheless, it is still possible to infer information about the sparsity of $\bm{K}_{s,s}^{\mu}$ from the sparsity patterns of $\bm{K}_{1,1}^{\mu}, \hdots, \bm{K}_{d,d}^{\mu}$.
The mapping \eqref{eq: tensor product sparsity} allows us to identify the following ``split-axes'' formulation for the steady-state Jacobian:
\begin{equation*}
    \bm{K}_{z_1,\hdots,z_d}^{\mu}[i_{z_1},\hdots,i_{z_d}] = \bm{K}_z^{\mu}[i_z], \qquad \mathcal{I}_z (i_{z_1},\hdots,i_{z_d}) = i_z.
\end{equation*}
Given our assumption of fixed sparsity across the parameters, we can use the mapping \eqref{eq: tensor product sparsity} to interchangeably consider the snapshots tensors $\bm{K}_{s,s,\mu}$, $\bm{K}_{z,\mu}$ and $ \bm{K}_{z_1,\hdots,z_d,\mu}$. Similarly, in unsteady applications we have the congruence by isometry relationships 
\begin{equation*}
    \bm{K}_{st,st,\mu} \cong \bm{K}_{z,t,\mu} \cong \bm{K}_{z_1,\hdots,z_d,t,\mu}.
\end{equation*}
Therefore, we may run Alg. \ref{alg: ttsvd} on $\bm{K}_{z_1,\hdots,z_d,t,\mu}$ to compute the \ac{tt} decomposition 
\begin{equation*}
    \{\bm{\Phi}^{\bm{K}}_{\widehat{0},z_1,\widehat{1}}, \ 
    \hdots,  
    \bm{\Phi}^{\bm{K}}_{\widehat{d-1},z_d,\widehat{d}} \ , \ 
    \bm{\Phi}^{\bm{K}}_{\widehat{d},t,\widehat{t}}\}
    ,
\end{equation*}
and consequently perform a \ac{ttmdeim} approximation of the Jacobians:
\begin{equation}
    \label{eq: tt mdeim approximation Jacobian}
    \bm{K}_{zt}^{\mu} \approx \widehat{\bm{K}}_{zt}^{\mu} = 
    \left( \bm{\Phi}_{\widehat{0},z_1,\widehat{1}}^{\bm{K}} \cdots \bm{\Phi}_{\widehat{d},t,\widehat{t}}^{\bm{K}} \right)  \widehat{\bm{K}}_{\widehat{t}}^{\mu}, 
\end{equation}
where
\begin{equation}
    \label{eq: tt mdeim online Jacobian}
    \widehat{\bm{K}}_{\widehat{t}}^{\mu} = \left( \bm{P}^{\bm{K}}_{\widehat{t}, zt}(\bm{\Phi}_{\widehat{0},z_1,\widehat{1}}^{\bm{K}} \cdots \bm{\Phi}_{\widehat{d},t,\widehat{t}}^{\bm{K}}) \right)^{-1} \bm{P}^{\bm{K}}_{\widehat{t}, zt} \bm{K}_{zt}^{\mu} = \left(\bm{P}^{\bm{K}}_{\widehat{t}, zt} \bm{\Phi}^{\bm{K}}_{zt,\widehat{t}}\right)^{-1} \bm{P}^{\bm{K}}_{\widehat{t}, zt} \bm{K}_{zt}^{\mu}.
\end{equation}
We lastly remark that, by exploiting the sparsity of $\bm{K}^{\mu}_{i,i}$, the spatial cores can equivalently be represented as 4-d sparse arrays:
\begin{equation}
    \label{eq: 4-d cores jacobian}
    \bm{\Phi}^{\bm{K}}_{\widehat{i-1},i,i,\widehat{i}} \cong \bm{\Phi}^{\bm{K}}_{\widehat{i-1}, z_i,\widehat{i}} \ .
\end{equation} 
We use \eqref{eq: 4-d cores jacobian} in the following subsection, where we detail the Galerkin projection of \eqref{eq: tt mdeim approximation Jacobian} onto the \ac{ttrb} subspace identified by \eqref{eq: ttrb basis}.

\subsection{Galerkin projection}
\label{subs: galerkin projection}

In this subsection, we describe the assembly of the reduced problem \eqref{eq: space time mdeim reduced problem} when employing the \ac{ttrb} method. In this context, the projection operator is given by \eqref{eq: ttrb basis}, and the residuals and Jacobians are approximated via \ac{ttmdeim}. The assembly process involves the Galerkin projection of the \ac{ttmdeim} approximations onto the \ac{ttrb} subspace. As in previous subsections, we commence by recalling the procedure in a standard \ac{strb} setting. Moreover, we only focus on the projection of the Jacobian, as it is more intricate than that of the residual. Recalling the definition of $\mathcal{K}$ from \eqref{eq: def kronecker product map}, the Galerkin projection of the Jacobian in a standard \ac{strb} framework requires computing
\begin{equation}
    \label{eq: tpod galerkin Jacobian}
    \begin{split}
        \widebar{\bm{K}}^{\mu}_{\widehat{st},\widehat{st}} &= \left(\bm{\Phi}_{\widehat{s},s} \otimes \bm{\Phi}_{\widehat{t},t}\right) \left(\sum\limits_{i_s=1}^{r_s^{\bm{K}}}\sum\limits_{i_t=1}^{r_t^{\bm{K}}} \left(\bm{\Phi}^{\bm{K}}_{s,\widehat{s}^{\bm{K}},s}\left[:,i_s,:\right] \otimes \bm{\Phi}^{\bm{K}}_{t,\widehat{t}^{\bm{K}}}\left[:,i_t\right]\right) \widehat{\bm{K}}_{\widehat{st}^{\bm{K}}}^{\mu}\left[\mathcal{K}\left(i_s,i_t\right)\right] \right) \left(\bm{\Phi}_{s,\widehat{s}} \otimes \bm{\Phi}_{t,\widehat{t}}\right) \\
        &= 
        \sum\limits_{i_s=1}^{r_s^{\bm{K}}}\sum\limits_{i_t=1}^{r_t^{\bm{K}}} \bigg(\left(\bm{\Phi}_{\widehat{s},s}\bm{\Phi}^{\bm{K}}_{s,\widehat{s}^{\bm{K}},s}\left[:,i_s,:\right] \bm{\Phi}_{s,\widehat{s}}\right) \otimes \widehat{\bm{\Phi}}^{\bm{K}}_{\widehat{t},\widehat{t}^{\bm{K}},\widehat{t}}\left[:,i_t,:\right] \bigg) \widehat{\bm{K}}_{\widehat{st}^{\bm{K}}}^{\mu}\left[\mathcal{K}\left(i_s,i_t\right)\right],
    \end{split}
\end{equation}
where we have introduced 
\begin{equation*}
    \widehat{\bm{\Phi}}^{\bm{K}}_{\widehat{t},\widehat{t}^{\bm{K}},\widehat{t}} \in \R^{n_t \times n_t^{\bm{K}} \times n_t}, \quad
    \widehat{\bm{\Phi}}^{\bm{K}}_{\widehat{t},\widehat{t}^{\bm{K}},\widehat{t}}\left[\alpha,i_t,\beta\right] = \sum\limits_{n=1}^{N_t} \bm{\Phi}_{t,\widehat{t}}\left[n,\alpha\right]  \bm{\Phi}^{\bm{K}}_{t,\widehat{t}^{\bm{K}}}\left[n,i_t\right] \bm{\Phi}_{t,\widehat{t}}\left[n,\beta\right].
\end{equation*}
As shown above, the Jacobian reduction comprises a Kronecker product between a spatial and a temporal factor, the result of which is then multiplied by the $\bm{\mu}$-dependent coefficient. Since $N_z \gg N_t$ in practical applications, the cost of \eqref{eq: tpod galerkin Jacobian} scales as $\mathcal{O}\left( r_s^{\bm{K}} r_s^2 N_z \right)$, i.e. the complexity of computing the spatial factor. 
\\

Let us now consider the Jacobian projection in a \ac{tt} framework. Exploiting the mixed-product property of the Kronecker product, and skipping some computations that are conceptually straightforward but of tedious notation, we have
\begin{equation}
    \label{eq: tt galerkin Jacobian}
    \begin{split}
        \widebar{\bm{K}}^{\mu}_{\widehat{t},\widehat{t}} = &\sum_{\alpha_1,\beta_1,\delta_1}
        \bm{\Phi}_{\widehat{0},1,\widehat{1}}\left[1,:,\alpha_1\right]^T \bm{\Phi}_{\widehat{0}^{\bm{K}},1,1,\widehat{1}^{\bm{K}}}^{\bm{K}} \left[1,:,:,\beta_1\right] \bm{\Phi}_{\widehat{0},1,\widehat{1}}\left[1,:,\delta_1\right] \cdot \\
        & \sum_{\alpha_2,\beta_2,\delta_2} \hdots 
        \sum_{\alpha_d,\beta_d,\delta_d} \bm{\Phi}_{\widehat{d-1},d,\widehat{d}}\left[\alpha_{d-1},:,\alpha_d\right]^T \bm{\Phi}_{\widehat{d-1}^{\bm{K}},d,d,\widehat{d}^{\bm{K}}}^{\bm{K}} \left[\beta_{d-1},:,:,\beta_d\right] \bm{\Phi}_{\widehat{d-1},d,\widehat{d}}\left[\delta_{d-1},:,\delta_d\right] \cdot \\
        &\ \ \ \sum_{\beta_t} 
        \sum\limits_{n=1}^{N_t} \bm{\Phi}_{\widehat{d},t,\widehat{t}}\left[\alpha_{d},n,:\right]^T \bm{\Phi}_{\widehat{d}^{\bm{K}},t,\widehat{t}^{\bm{K}}}^{\bm{K}} \left[\beta_{d},n,\beta_t\right] \bm{\Phi}_{\widehat{d},t,\widehat{t}}\left[\delta_{d},n,:\right] \widehat{\bm{K}}_{\widehat{t}^{\bm{K}}}^{\mu}\left[\beta_t\right] 
        \\
        = & \ \ \sum\limits_{\beta_t} \widehat{\bm{\Phi}}_{\widehat{t},\widehat{t}^{\bm{K}},\widehat{t}}^{\bm{K}}[:,\beta_t,:] \widehat{\bm{K}}_{\widehat{t}^{\bm{K}}}^{\mu}[\beta_t].
    \end{split}
\end{equation}
Despite the presence of numerous indices, \eqref{eq: tt galerkin Jacobian} simply expresses, in terms of several 3d and 4-d \ac{tt} cores, the same spatial and temporal operations already discussed in \eqref{eq: tpod galerkin Jacobian} for \ac{strb}. Before deriving the cost of \eqref{eq: tt galerkin Jacobian}, we recall that the number of nonzeros in a sparse \ac{fe} matrix in a 1-d problem is $\mathcal{O}\left(M\right)$. Consequently, the cost of computing each spatial compression
\begin{equation*}
    \bm{\Phi}_{\widehat{i-1},i,\widehat{i}}\left[\alpha_{i-1},:,\alpha_i\right]^T \bm{\Phi}_{\widehat{i-1}^{\bm{K}},i,i,\widehat{i}^{\bm{K}}}^{\bm{K}} \left[\beta_{i-1},:,:,\beta_i\right] \bm{\Phi}_{\widehat{i-1},i,\widehat{i}}\left[\delta_{i-1},:,\delta_i\right]
\end{equation*}
scales as $\mathcal{O}\left(M\right)$. Summing over the indices, and introducing a ``bounding rank'' for the Jacobians
\begin{equation*}
    r_i^{\bm{K}} < r^{\bm{K}} \ \forall \ i 
\end{equation*}
we can show that the cost scales as $\mathcal{O}\left(d M (r^2 r^{\bm{K}})^2\right)$. Depending on the expression of $r$ and $r^{\bm{K}}$, the cost of this step can be considerable, though in our experience it is comparable to the Galerkin projection in an \ac{strb} framework. 
\begin{remark}
    When solving a vector-valued problem, the hyper-reduction for the residuals/Jacobians involves the \ac{tt} decompositions
    \begin{align}
        \label{eq: elasticity res/jac cores}
        \left\{\bm{\Phi}^{\bm{L}}_{\widehat{0},1,\widehat{1}} \ , \hdots , \bm{\Phi}^{\bm{L}}_{\widehat{d-1},d,\widehat{d}} \ , \bm{\Phi}^{\bm{L}}_{\widehat{d},c,\widehat{c}} \ , \bm{\Phi}^{\bm{L}}_{\widehat{c},t,\widehat{t}}\right\} \ ; \qquad 
        \left\{\bm{\Phi}^{\bm{K}}_{\widehat{0},1,1,\widehat{1}} \ , \hdots , \bm{\Phi}^{\bm{K}}_{\widehat{d-1},d,d,\widehat{d}} \ , \bm{\Phi}^{\bm{K}}_{\widehat{d},c,c,\widehat{c}} \ , \bm{\Phi}^{\bm{K}}_{\widehat{c},t,\widehat{t}}\right\} \ .
    \end{align}
    Recall that the subscript $c$ denotes the components axis, as explained in \eqref{eq: vector value tt}. In this case, the Galerkin projection increases by one step, due to the presence of the additional component core.
\end{remark}

\subsection{A posteriori error estimate}
\label{subs: accuracy}
In this subsection we present the accuracy estimate for the \ac{ttrb} method. The result follows directly from an adaptation of the analysis in \cite{MUELLER2024115767}, originally developed for the standard \ac{strb} approach. Before stating the result, we introduce a few key quantities:
\begin{itemize}
    \item The coercivity constant, in the $\bm{X}_{st,st}$-norm, of the full-order Jacobian:
    \begin{align}
        \label{eq: X-coercivity constant}
       C =
       \inf\limits_{\bm{V}_{st} \neq \bm{0}_{st}}
       \frac{ \| \bm{K}^{\mu}_{st,st} \bm{V}_{st} \|_{\bm{X}_{st,st}^{-1}}}{\norm{\bm{V}_{st}}_{\bm{X}_{st,st}} } 
       = \norm{\bm{X}_{st,st}^{-1/2}\bm{K}^{\mu}_{st,st}\bm{X}_{st,st}^{-1/2}}_2.
    \end{align}
    \item The \ac{ttmdeim} error due to interpolation for the Jacobian and the residual, respectively denoted as $\chi^{\bm{K}}$ and $\chi^{\bm{L}}$ (see \eqref{eq: ttmdeim error bound}). 
\end{itemize}
As in \cite{NEGRI2015431,MUELLER2024115767}, the total error introduced by the \ac{ttrb} method can be decomposed into two main contributions. The first is associated with the \ac{ttmdeim} hyper-reduction, while the second stems from the Galerkin projection onto the subspace spanned by $\bm{\Phi}_{st,\widehat{t}}$. 

\begin{theorem}
    \label{thm: error of tt-rb X norm}
    Let us consider the well-posed problem defined in \eqref{eq: space time FOM}, and its reduced approximation \eqref{eq: space time mdeim reduced problem}, obtained by combining the \ac{ttsvd} and \ac{ttmdeim} procedures. Let $\bm{U}^{\mu}_{st}$ denote the solution of the full-order model (\ac{fom}), and let $\widehat{\bm{U}}^{\mu}_{st}$ denote its reduced approximation. The following estimate holds:
    \begin{align}
        \label{eq: error of tt-rb X norm}
        \norm{\bm{U}^{\mu}_{st} - \widehat{\bm{U}}_{st}^{\mu}}_{\bm{X}_{st,st}} 
        \leq & \ C^{-1} \left(  \chi^{\bm{L}} \norm{\bm{L}_{s,t\mu}}_F \norm{\bm{X}_{st,st}^{-1/2}}_2 +  \chi^{\bm{K}} \norm{\bm{K}_{s,t\mu}}_F \norm{\bm{X}_{st,st}^{-1}}_2\norm{\widehat{\bm{U}}_{st}^{\mu}}_{\bm{X}_{st,st}} \right) \sqrt{d+1} \varepsilon 
        \\&+ C^{-1} \norm{\widehat{\bm{L}}^{\mu}_{st} - \widehat{\bm{K}}^{\mu}_{st,st}\widehat{\bm{U}}_{st}^{\mu}}_{\bm{X}_{st,st}^{-1}}.
    \end{align}
    \begin{proof}
        See the proof in \cite{MUELLER2024115767}, Theorem 1. The only difference lies in the presence of the $\sqrt{d+1}$ factor, which arises from applying a \ac{ttsvd} on the residuals and Jacobians. 
    \end{proof}
\end{theorem}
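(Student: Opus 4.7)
The plan is to adapt the standard residual-based a posteriori analysis, tracking how the hyper-reduction constants change when the Kronecker-structured MDEIM is replaced by TTMDEIM. Starting from the FOM identity $\bm{K}^{\mu}_{st,st}\bm{U}^{\mu}_{st}=\bm{L}^{\mu}_{st}$, I would add and subtract the TTMDEIM approximations $\widehat{\bm{L}}^{\mu}_{st}$ and $\widehat{\bm{K}}^{\mu}_{st,st}$ from \eqref{eq: tt mdeim approximation}--\eqref{eq: tt mdeim approximation Jacobian} to obtain the error identity
\begin{equation*}
\bm{K}^{\mu}_{st,st}(\bm{U}^{\mu}_{st}-\widehat{\bm{U}}^{\mu}_{st}) = (\bm{L}^{\mu}_{st}-\widehat{\bm{L}}^{\mu}_{st}) - (\bm{K}^{\mu}_{st,st}-\widehat{\bm{K}}^{\mu}_{st,st})\widehat{\bm{U}}^{\mu}_{st} + (\widehat{\bm{L}}^{\mu}_{st}-\widehat{\bm{K}}^{\mu}_{st,st}\widehat{\bm{U}}^{\mu}_{st}).
\end{equation*}

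Next, I would take the $\bm{X}_{st,st}^{-1}$-norm on both sides. By the coercivity constant $C$ defined in \eqref{eq: X-coercivity constant}, the left-hand side dominates $C\,\norm{\bm{U}^{\mu}_{st}-\widehat{\bm{U}}^{\mu}_{st}}_{\bm{X}_{st,st}}$, so the triangle inequality gives
\begin{equation*}
\norm{\bm{U}^{\mu}_{st}-\widehat{\bm{U}}^{\mu}_{st}}_{\bm{X}_{st,st}} \leq C^{-1}\bigl(\norm{\bm{L}^{\mu}_{st}-\widehat{\bm{L}}^{\mu}_{st}}_{\bm{X}_{st,st}^{-1}} + \norm{(\bm{K}^{\mu}_{st,st}-\widehat{\bm{K}}^{\mu}_{st,st})\widehat{\bm{U}}^{\mu}_{st}}_{\bm{X}_{st,st}^{-1}} + \norm{\widehat{\bm{L}}^{\mu}_{st}-\widehat{\bm{K}}^{\mu}_{st,st}\widehat{\bm{U}}^{\mu}_{st}}_{\bm{X}_{st,st}^{-1}}\bigr).
\end{equation*}
For the first two summands I would pass to the Euclidean norm via $\norm{\bm{v}}_{\bm{X}_{st,st}^{-1}}\leq\norm{\bm{X}_{st,st}^{-1/2}}_{2}\norm{\bm{v}}_{2}$ and then invoke Theorem \ref{thm: ttmdeim error bound} with the appropriate constants $\chi^{\bm{L}}$ and $\chi^{\bm{K}}$. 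For the Jacobian contribution I would additionally use submultiplicativity of the Frobenius norm to peel off $\widehat{\bm{U}}^{\mu}_{st}$, which is then re-expressed in the $\bm{X}_{st,st}$-norm through a second application of $\norm{\bm{X}_{st,st}^{-1/2}}_{2}$; this double conversion is what produces the $\norm{\bm{X}_{st,st}^{-1}}_{2}$ factor that appears in \eqref{eq: error of tt-rb X norm}. The third summand is kept verbatim, interpreted as the online reduced residual.

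The key departure from the argument in \cite{MUELLER2024115767} lies in the hyper-reduction bound itself: Theorem \ref{thm: ttmdeim error bound} carries an extra $\sqrt{d+1}$ prefactor, reflecting the accumulation of truncation errors over the $d+1$ unfoldings performed during TTSVD. All other algebraic steps of the original proof remain untouched. The main obstacle I anticipate is the consistent bookkeeping of norm conversions, in particular ensuring that the single factor $\norm{\bm{X}_{st,st}^{-1/2}}_{2}$ of the residual contribution correctly composes into the squared factor $\norm{\bm{X}_{st,st}^{-1}}_{2}$ of the Jacobian contribution, so that the final expression matches \eqref{eq: error of tt-rb X norm} exactly with the prescribed $\sqrt{d+1}\,\varepsilon$ scaling.
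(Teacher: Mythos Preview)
Your proposal is correct and follows essentially the same approach as the paper: the paper's proof simply defers to Theorem~1 of \cite{MUELLER2024115767} and notes that the only modification is the $\sqrt{d+1}$ factor coming from the \ac{ttsvd}-based hyper-reduction bound, which is precisely the residual-based argument you have reconstructed. Your identification of Theorem~\ref{thm: ttmdeim error bound} as the source of the $\sqrt{d+1}$ prefactor, together with the standard coercivity-plus-triangle-inequality skeleton and the norm conversions producing $\norm{\bm{X}_{st,st}^{-1/2}}_2$ and $\norm{\bm{X}_{st,st}^{-1}}_2$, matches the intended proof exactly.
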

We refer to \cite{MUELLER2024115767} for additional details. Here, we simply remark that the bound in \eqref{eq: error of tt-rb X norm} may appear conservative, since not all terms explicitly decay with $\varepsilon$. Nevertheless, in practice, the residual-like quantity 
\begin{equation*}
    \norm{\widehat{\bm{L}}^{\mu}_{st} - \widehat{\bm{K}}^{\mu}_{st,st}\widehat{\bm{U}}_{st}^{\mu}}_{\bm{X}_{st,st}^{-1}}
\end{equation*}
is strongly correlated with $\varepsilon$. Specifically, when the problem is reducible according to the definition in \cite{quarteroni2015reduced}, if the reduced subspace is adequately constructed (i.e. via a sufficiently rich and representative parameter sampling during snapshot generation), this residual exhibits the same behavior as the estimate in \eqref{eq: ttsvd accuracy X}. 
\section{Numerical results} 
\label{sec: results} 
In this section, we analyze the numerical performance of the proposed \ac{ttrb} method by comparing it with the standard \ac{strb} approach. The comparison is conducted across different tolerances $\varepsilon$ and spatial \acp{dof} per direction $M$. Specifically, we focus on:
\begin{itemize}
    \item \emph{Offline performance}: We evaluate the efficiency of the \ac{ttrb} method compared to the \ac{strb} approach in constructing the $\bm{X}_{st,st}$-orthogonal reduced subspace. The analysis focuses on the impact of the spatial resolution $M$, the spatial dimension $d$, and the inclusion of time on the computational cost. Since the hyper-reduction costs are comparable for both methods, this step is excluded from the study.
    \item \emph{Online performance}: We assess and compare the error and computational speedup achieved by the \ac{strb} and \ac{ttrb} methods relative to the \ac{hf} solutions. This evaluation validates the accuracy estimate in \eqref{eq: error of tt-rb X norm} for various tolerances $\varepsilon \in \mathcal{E} = \{10^{-2},10^{-3},10^{-4}\}$.
\end{itemize}
We consider three benchmark problems: the Poisson equation, the heat equation, and the linear elasticity model. The Poisson equation is solved on both 2-$d$ and 3-$d$ geometries, while the heat and linear elasticity problems are restricted to 3-$d$ domains. For the spatial discretization, we use $Q1$ Lagrangian elements, and the transient problems are time-integrated using the Crank-Nicolson scheme. The offline and online parameter sets are disjoint, as defined in \eqref{eq: disjoint params}. We use $N_{\mu} = 80$ offline samples for the Poisson problem and $N_{\mu} = 50$ for the other benchmarks, with $N_{\mathrm{on}} = 10$ online samples for all tests. Hyper-reduction is performed using the first $30$ parameters from $\mathcal{D}_{\mathrm{off}}$. Offline parameters are sampled using a Halton sequence \cite{PHARR2017401}, while online parameters are uniformly drawn from $\mathcal{D}$. The accuracy of the methods is assessed using the metric defined in
\begin{equation}
\label{eq: error definition}
    E = \frac{1}{N_{\mathrm{on}}}\overset{N_{\mathrm{on}}}{\underset{i=1}{\sum}}\dfrac{\norm{\widehat{\bm{U}}_{st}^{\mu_i} - \bm{U}_{st}^{\mu_i}}_{\bm{X}_{st,st}}}{\norm{\bm{U}_{st}^{\mu_i}}_{\bm{X}_{st,st}}}.
\end{equation}
Since the Poisson equation is time-independent, a steady version of \eqref{eq: error definition} is employed. In all tests, the norm matrix $\bm{X}_{s,s}$ is defined as in \eqref{eq: H1 norm matrix}. To assess the computational efficiency of the \ac{strb} and \ac{ttrb} methods, we compute their speedup relative to the \ac{hf} simulations. The speedup is quantified as the ratio of the \ac{hf} cost, measured in terms of either wall time or memory usage, to the corresponding \ac{rom} cost. Additionally, we report the reduction factor, defined as the ratio of the \ac{fom} dimension $N_{st}$ to the dimension of the reduced subspace (i.e., $r_{st}$ for \ac{strb} and $r_t$ for \ac{ttrb}). All numerical experiments are conducted on a local machine equipped with 66$\si{Gb}$ of RAM and an Intel Core i7 processor running at 3.40$\si{GHz}$. The simulations utilize our \ac{rom} library \texttt{GridapROMs.jl} \cite{muellerbadiagridaproms}, implemented in the Julia programming language. Table~\ref{tb: fom details} summarizes the details of the \ac{hf} simulations. For the linear elasticity problem, which is defined on a cuboid rather than a cube, we report the average number of \acp{dof} per direction instead of $M$.
\begin{table}[t]
    \begin{tabular}{ccccccccccccc} 
        \toprule

        \multicolumn{1}{c}{\textbf{Measure}}
        &\multicolumn{3}{c}{\textbf{Poisson eq. 2 - }$\bm{d}$}
        &\multicolumn{3}{c}{\textbf{Poisson eq. 3 - }$\bm{d}$}
        &\multicolumn{3}{c}{\textbf{Heat eq. 3 - }$\bm{d}$}
        &\multicolumn{3}{c}{\textbf{Elasticity eq. 3 - }$\bm{d}$} \\
        
        \cmidrule(lr){2-4} \cmidrule(lr){5-7} \cmidrule(lr){8-10} \cmidrule(lr){11-13}

        \multirow{1}{*}[0.1cm]{}
        \textbf{Avg.} $\bm{M}$ &$250$ &$350$ &$460$ &$40$ &$50$ &$60$ &$40$ &$45$ &$50$ &$40$ &$45$ &$50$ \\

        \cmidrule(lr){2-4} \cmidrule(lr){5-7} \cmidrule(lr){8-10} \cmidrule(lr){11-13}

        \multirow{1}{*}[0.1cm]{}
        $\bm{N_t}$ &$//$ &$//$ &$//$ &$//$ &$//$ &$//$ &$10$ &$10$ &$10$ &$10$ &$10$ &$10$\\

        \cmidrule(lr){2-4} \cmidrule(lr){5-7} \cmidrule(lr){8-10} \cmidrule(lr){11-13}
    
        \multirow{1}{*}[0.1cm]{}
        \textbf{WT} $(\si{\second})$ &$0.29$ &$0.54$ &$0.82$ &$5.84$ &$23.21$ &$51.81$ &$72.28$ &$120.26$ &$229.89$ &$43.17$ &$95.31$ &$212.01$ \\
    
        \cmidrule(lr){2-4} \cmidrule(lr){5-7} \cmidrule(lr){8-10} \cmidrule(lr){11-13}
    
        \multirow{1}{*}[0.15cm]{}
        \textbf{MEM} $(Gb)$ &$0.12$ &$0.22$ &$0.38$ &$1.71$ &$4.84$ &$9.47$ &$18.68$ &$25.61$ &$52.89$ &$14.88$ &$27.97$ &$48.94$ \\
        \bottomrule 
    \end{tabular}
    \caption{Details of the \ac{hf} simulations. From left to right: Poisson equation on a 2-$d$ domain, Poisson equation on a 3-$d$ domain, heat equation on a 3-$d$ domain, and transient linear elasticity equation on a 3-$d$ domain. From top to bottom: average number of spatial \acp{dof} per direction (Avg. $M$), number of temporal \acp{dof}, average wall time (WT), and average memory allocations (MEM) of a \ac{hf} simulation.}
    \label{tb: fom details}
\end{table}

\subsection{Poisson equation}
The Poisson equation reads as 
\begin{equation}
    \label{eq: poisson equation}
    \begin{cases}
        - \bm{\nabla} \cdot (\alpha^{\mu} \bm{\nabla} u^{\mu}) = f^{\mu}  & \text{in} \ \Omega, \\
        u^{\mu} = g^{\mu}  & \text{on} \ \Gamma_D, \\
        \alpha^{\mu} \underline{n} \cdot \bm{\nabla} u^{\mu}  = h^{\mu} & \text{on} \ \Gamma_N,
    \end{cases} 
\end{equation}
where $\underline{n}$ is the normal vector to $\partial\Omega$, and
\begin{equation}
    \label{eq: poisson equation data}
    \alpha^{\mu}(\underline{x}) = \mu_1 + \mu_2 x_1,
    \quad
    f^{\mu}(\underline{x}) = \mu_3,
    \quad
    g^{\mu}(\underline{x}) = e^{-\mu_4 x_2},
    \quad
    h^{\mu}(\underline{x}) = \mu_5.
\end{equation}
The parameter space for this test is defined as $\mathcal{D} = [1,5]^5$ and
\begin{equation*}
    \bm{\mu} = \left(\mu_1,\mu_2,\mu_3,\mu_4,\mu_5\right)^T.
\end{equation*}
To validate the offline cost estimates in \eqref{eq: cost tpod}-\eqref{eq: cost ttsvd}, we solve the Poisson equation \eqref{eq: poisson equation} on two domains: $\Omega = [0,1]^2$ and $\Omega = [0,1]^3$. The values of $M$ are chosen such that the problem sizes are equivalent in both cases. For the 2-$d$ test, the Dirichlet and Neumann boundaries are defined as: 
\begin{equation}
    \label{eq: boundaries 2d}
    \Gamma_D = \{\underline{x} = (x_1,x_2)^T \in \partial\Omega : x_1 = 0 \}, \qquad 
    \Gamma_N = \{\underline{x} = (x_1,x_2)^T \in \partial\Omega : x_1 = 1 \},
\end{equation}
while for the 3-$d$ we use
\begin{equation}
    \label{eq: boundaries 3d}
    \Gamma_D = \{\underline{x} = (x_1,x_2,x_3)^T \in \partial\Omega : x_1 = 0 \}, \qquad 
    \Gamma_N = \{\underline{x} = (x_1,x_2,x_3)^T \in \partial\Omega : x_1 = 1 \}.
\end{equation}
In Table~\ref{tb: rb offline poisson equation}, we compare the offline phase costs of the steady \ac{rb} algorithm and the \ac{ttrb} method. For the \ac{rb} approach, the offline cost is largely independent of the tolerance, so we only report results for $\varepsilon = 10^{-4}$. In contrast, the offline cost of \ac{ttrb} varies with the tolerance, as the computational cost of each \ac{ttsvd} step depends on the size of the core computed in the previous iteration. Consequently, the \ac{ttrb} cost is presented as intervals: the lower bound corresponds to $\varepsilon = 10^{-2}$, and the upper bound to $\varepsilon = 10^{-4}$. As expected, \ac{ttsvd} outperforms \ac{tpod}, particularly in the 3-$d$ case. Although the 3-$d$ problem size is equivalent to the 2-$d$ one, \ac{tpod} is significantly more expensive in 3-$d$ due to the computations involving $\bm{X}_{s,s}$. In contrast, the cost of \ac{ttsvd} is largely unaffected by the spatial dimension $d$, as it scales with the \ac{rsvd} cost (see \eqref{eq: cost ttsvd}), which remains relatively stable given that both tests are designed to retain the same number of \acp{dof}. In the 2-$d$ case, \ac{ttsvd} is also more efficient, primarily due to smaller multiplicative constants, which aligns with the theoretical cost estimates in \eqref{eq: cost tpod}-\eqref{eq: cost ttsvd}.
\begin{table}[t]
    \small
    \begin{tabular}{cccccccc} 
        \toprule

        & &\multicolumn{3}{c}{\textbf{2-}$\bm{d}$} &\multicolumn{3}{c}{\textbf{3-}$\bm{d}$} \\

        \cmidrule(lr){3-5} \cmidrule(lr){6-8}
    
        &\multicolumn{1}{c}{\textbf{Measure}}
        &\multicolumn{1}{c}{$\bm{M} = 250$}
        &\multicolumn{1}{c}{$\bm{M} = 350$} 
        &\multicolumn{1}{c}{$\bm{M} = 460$} 
        &\multicolumn{1}{c}{$\bm{M} = 40$}
        &\multicolumn{1}{c}{$\bm{M} = 50$} 
        &\multicolumn{1}{c}{$\bm{M} = 60$}\\
        
        \midrule
    
        \multirow{2}{*}[0.1cm]{\textbf{TPOD}}
        &\textbf{WT} $(\si{\second})$ &$0.85$ &$2.13$ &$3.22$ &$5.74$ &$13.34$ &$30.31$ \\

        &\textbf{MEM} $(Gb)$ &$0.30$ &$0.63$ &$1.00$ &$2.24$ &$5.21$ &$10.29$ \\
    
        \midrule
    
        \multirow{2}{*}[0.1cm]{\textbf{TT-SVD}}
        &\textbf{WT} $(\si{\second})$ &$[0.16,0.19]$ &$[0.18,0.32]$ &$[0.58,0.75]$ &$[0.15,0.18]$ &$[0.34,0.39]$ &$[0.60,0.69]$ \\

        &\textbf{MEM} $(Gb)$ &$[0.05,0.16]$ &$[0.10,0.29]$ &$[0.17,0.42]$ &$[0.07,0.11]$ &$[0.12,0.18]$ &$[0.19,0.28]$ \\

        \bottomrule 
    \end{tabular}
    \caption{Offline results, Poisson equation. From left to right: results obtained on a 2-$d$ geometry, and a 3-$d$ geometry. From top to bottom: wall time (WT) and memory allocations (MEM) associated with the construction of the $\bm{X}_{s,s}$-orthogonal basis, using \ac{tpod} and \ac{ttsvd}, respectively. The \ac{tpod} results are computed with $\varepsilon = 10^{-4}$, while for \ac{ttsvd} we display the lower and upper bounds for every $\varepsilon \in \mathcal{E}$.}
    \label{tb: rb offline poisson equation}
\end{table}
Next, in Tbs. \ref{tb: online results 2d poisson equation}-\ref{tb: online results 3d poisson equation}, we report the results related to the online phase. We first note that, in the 2-$d$ benchmark, both methods suffer a loss of accuracy when $\varepsilon = 10^{-3}$. This is entirely due to the hyper-reduction of the \ac{lhs} selecting the same rank as for $\varepsilon = 10^{-2}$, thus inflating the error \eqref{eq: error of tt-rb X norm}. In our experience, this is a fairly common phenomenon. As expected, both methods achieve similar convergence rates, even though \ac{ttrb} exhibits larger constants than the traditional \ac{rb} (particularly in the 2-$d$ test). This is partly due to the presence of the $\sqrt{d}$ factor in the \ac{ttrb} estimate \eqref{eq: error of tt-rb X norm}; however, we have observed that this is mostly caused by higher hyper-reduction errors, particularly for the Jacobian. What occurs in this case is analogous to the aforementioned behavior for $\varepsilon = 10^{-3}$: for certain tolerances, \ac{ttmdeim} appears to select a rank similar to that chosen for a regime with a higher error threshold, resulting in a spike in the observed error. In terms of speedup, \ac{ttrb} performs slightly better, particularly in the 3-$d$ test, as it achieves larger reduction factors in both the reduced subspace and the hyper-reduction steps.
\begin{table}[t]
    \small
    \begin{tabular}{ccccccccccc} 
        \toprule
    
        & 
        &\multicolumn{3}{c}{$\bm{M} = 250$}
        &\multicolumn{3}{c}{$\bm{M} = 350$}
        &\multicolumn{3}{c}{$\bm{M} = 460$} \\

        \cmidrule(lr){3-5} \cmidrule(lr){6-8} \cmidrule(lr){9-11}

        &\multicolumn{1}{c}{\textbf{Measure}}
        &\multicolumn{1}{c}{$\bm{\varepsilon} = 10^{-2}$}
        &\multicolumn{1}{c}{$\bm{\varepsilon} = 10^{-3}$} 
        &\multicolumn{1}{c}{$\bm{\varepsilon} = 10^{-4}$} 
        &\multicolumn{1}{c}{$\bm{\varepsilon} = 10^{-2}$}
        &\multicolumn{1}{c}{$\bm{\varepsilon} = 10^{-3}$} 
        &\multicolumn{1}{c}{$\bm{\varepsilon} = 10^{-4}$}
        &\multicolumn{1}{c}{$\bm{\varepsilon} = 10^{-2}$}
        &\multicolumn{1}{c}{$\bm{\varepsilon} = 10^{-3}$} 
        &\multicolumn{1}{c}{$\bm{\varepsilon} = 10^{-4}$}\\

        \cmidrule{2-11}
    
        \multirow{4}{*}[0.1cm]{\rotatebox[origin=c]{90}{\textbf{RB}}}
        &\textbf{E} / $\bm{\varepsilon}$ &$3.66$ &$35.80$ &$0.97$ &$2.45$ &$21.00$ &$0.81$ &$4.58$ &$40.57$ &$0.81$ \\

        &\textbf{RF} / $10^4$ &$1.25$ &$0.78$ &$0.57$ &$2.46$ &$1.53$ &$1.12$ &$4.24$ &$3.03$ &$1.93$ \\

        &\textbf{SU}-\textbf{WT} / $10^2$ &$3.38$ &$3.33$ &$3.17$ &$4.78$ &$4.50$ &$4.13$ &$5.26$ &$5.26$ &$5.21$ \\

        &\textbf{SU}-\textbf{MEM} / $10^2$ &$0.48$ &$0.48$ &$0.48$ &$0.48$ &$0.48$ &$0.48$ &$0.50$ &$0.50$ &$0.50$ \\

        \cmidrule{2-11}
    
        \multirow{4}{*}[0.1cm]{\rotatebox[origin=c]{90}{\textbf{TT-RB}}}
        &\textbf{E} / $\bm{\varepsilon}$ &$5.39$ &$52.05$ &$10.81$ &$3.34$ &$26.53$ &$16.45$ &$5.07$ &$45.14$ &$21.89$ \\

        &\textbf{RF} / $10^4$ &$1.57$ &$0.78$ &$0.57$ &$3.07$ &$2.46$ &$1.12$ &$4.24$ &$3.03$ &$1.93$ \\

        &\textbf{SU}-\textbf{WT} / $10^2$ &$3.59$ &$3.55$ &$3.51$ &$4.67$ &$4.44$ &$4.44$ &$5.71$ &$5.09$ &$4.71$ \\

        &\textbf{SU}-\textbf{MEM} / $10^2$ &$0.61$ &$0.61$ &$0.61$ &$0.61$ &$0.61$ &$0.61$ &$0.64$ &$0.64$ &$0.64$ \\

        \bottomrule 
    \end{tabular}
    \caption{Online results for the Poisson equation on a 2-$d$ domain. Metrics include: average accuracy (E), normalized with respect to $\varepsilon$; reduction factor (RF), expressed in tens of thousands; and average computational speedup in terms of wall time (SU-WT) and memory usage (SU-MEM), both expressed in hundreds. Results compare the performance of \ac{strb} and \ac{ttrb} relative to the \ac{hf} simulations.}
    \label{tb: online results 2d poisson equation}
\end{table}
\begin{table}[t]
    \small
    \begin{tabular}{ccccccccccc} 
        \toprule
    
        & 
        &\multicolumn{3}{c}{$\bm{M} = 40$}
        &\multicolumn{3}{c}{$\bm{M} = 50$}
        &\multicolumn{3}{c}{$\bm{M} = 60$} \\

        \cmidrule(lr){3-5} \cmidrule(lr){6-8} \cmidrule(lr){9-11}

        &\multicolumn{1}{c}{\textbf{Measure}}
        &\multicolumn{1}{c}{$\bm{\varepsilon} = 10^{-2}$}
        &\multicolumn{1}{c}{$\bm{\varepsilon} = 10^{-3}$} 
        &\multicolumn{1}{c}{$\bm{\varepsilon} = 10^{-4}$} 
        &\multicolumn{1}{c}{$\bm{\varepsilon} = 10^{-2}$}
        &\multicolumn{1}{c}{$\bm{\varepsilon} = 10^{-3}$} 
        &\multicolumn{1}{c}{$\bm{\varepsilon} = 10^{-4}$}
        &\multicolumn{1}{c}{$\bm{\varepsilon} = 10^{-2}$}
        &\multicolumn{1}{c}{$\bm{\varepsilon} = 10^{-3}$} 
        &\multicolumn{1}{c}{$\bm{\varepsilon} = 10^{-4}$}\\

        \cmidrule{2-11}
    
        \multirow{4}{*}[0.1cm]{\rotatebox[origin=c]{90}{\textbf{RB}}}
        &\textbf{E} / $\bm{\varepsilon}$ &$5.75$ &$0.89$ &$1.32$ &$5.58$ &$0.90$ &$1.46$ &$7.29$ &$1.05 $ &$1.24$  \\

        &\textbf{RF} / $10^4$ &$1.34$ &$0.84$ &$0.61$ &$2.60$ &$1.62$ &$1.18$ &$4.46$ &$2.79$ &$2.03$ \\

        &\textbf{SU}-\textbf{WT} / $10^2$  &$5.68$ &$5.54$ &$5.09$ &$8.50$ &$8.44$ &$8.35$ &$9.13$ &$8.79$ &$8.56$ \\

        &\textbf{SU}-\textbf{MEM} / $10^2$  &$0.62$ &$0.62$ &$0.62$ &$0.97$ &$0.96$ &$0.96$ &$1.13$ &$1.13$ &$1.13$ \\

        \cmidrule{2-11}
    
        \multirow{4}{*}[0.1cm]{\rotatebox[origin=c]{90}{\textbf{TT-RB}}}
        &\textbf{E} / $\bm{\varepsilon}$ &$16.26$ &$3.94$ &$4.13$ &$12.91$ &$3.41$ &$5.31$ &$13.76$ &$3.92$ &$4.93$ \\

        &\textbf{RF} / $10^4$ &$1.34$ &$0.96$ &$0.67$ &$3.25$ &$1.86$ &$1.18$ &$7.44$ &$3.19$ &$2.23$ \\

        &\textbf{SU}-\textbf{WT} / $10^2$ &$4.92$ &$4.74$ &$4.42$ &$13.68$ &$13.17$ &$13.10$ &$21.25$ &$20.97$ &$20.15$ \\

        &\textbf{SU}-\textbf{MEM} / $10^2$ &$0.64$ &$0.64$ &$0.64$ &$1.00$ &$0.99$ &$0.99$ &$1.16$ &$1.16$ &$1.16$ \\

        \bottomrule 
    \end{tabular}
    \caption{Online results for the Poisson equation on a 3-$d$ domain. Metrics include: average accuracy (E), normalized with respect to $\varepsilon$; reduction factor (RF), expressed in tens of thousands; and average computational speedup in terms of wall time (SU-WT) and memory usage (SU-MEM), both expressed in hundreds. Results compare the performance of \ac{strb} and \ac{ttrb} relative to the \ac{hf} simulations.}
    \label{tb: online results 3d poisson equation}
\end{table}

\subsection{Heat equation}
\label{subs: heat equation results}
In this section, we present the numerical solution of the heat equation:
\begin{equation} 
    \label{eq: heat equation}
    \begin{cases}
        \frac{\partial u^{\mu}}{\partial t} - \bm{\nabla} \cdot (\alpha^{\mu} \bm{\nabla} u^{\mu}) = f^{\mu}  & \text{in} \ \Omega \times (0,T], \\
        u^{\mu} = g^{\mu}  & \text{on} \ \Gamma_D \times (0,T], \\
        \alpha^{\mu} \underline{n} \cdot \bm{\nabla} u^{\mu}  = h^{\mu} & \text{on} \ \Gamma_N \times (0,T], \\
        u^{\mu} = u_0^{\mu} & \text{in} \ \Omega \times \{0\},
    \end{cases} 
\end{equation}
characterized by the following parametric data:
\begin{alignat}{3}
    \label{eq: heat equation data}
    % \begin{split}
        \alpha^{\mu}(\underline{x},t) &= \mu_1 + \mu_2 x_1,
        \qquad
        &&f^{\mu}(\underline{x},t) &&= \mu_3,
        \qquad
        g^{\mu}(\underline{x},t) = e^{-\mu_4 x_2}\left(1-\cos{(2 \pi t/T)} + \sin{(2 \pi t/T)}/\mu_5\right), \\
        h^{\mu}(\underline{x},t) &= \sin{(2 \pi t/T)}/\mu_6, \quad
        &&u^{\mu}(\underline{x}) &&= 0.
    % \end{split}
\end{alignat}
In this test case, we only consider $\Omega = [0,1]^3$, with the Dirichlet and Neumann boundaries defined as in \eqref{eq: boundaries 3d}. The temporal domain is $[0,T]$, with $T =0.1$, discretized into $N_t = 10$ uniform intervals. The parameter space we consider for this test is $\mathcal{D} = [1,5]^6$. \\ 
The results concerning the offline and online phases are shown in Tbs.~\ref{tb: rb offline heat equation}-\ref{tb: online results heat equation}. Similarly to the previous test case, the generation of $\bm{X}_{st,st}$-orthogonal subspaces is far cheaper in \ac{ttrb}. The speedup the latter achieves with respect to \ac{strb} is less impressive than in the 3-$d$ Poisson equation, since we had to consider lower values of $M$ for the heat equation due to memory constraints. Additionally, the presence of time decreases the speedup for \ac{ttrb}, as it increases the cost of the \acp{rsvd} without impacting the operations involving $\bm{X}_{s,s}$ -- which represent the leading computational cost for \ac{strb}. In terms of online results, we notice the significantly higher reduction factors for \ac{ttrb}, which is a result of considering a \ac{rb} subspace of dimension $r_t$ instead of $r_{st}$, as is the case for \ac{strb}. This significant dimensionality reduction translates in improved online speedups, particularly in terms of memory consumption.  
\begin{table}[t]
    \small
    \begin{tabular}{ccccc} 
        \toprule

        &\multicolumn{1}{c}{\textbf{Measure}}
        &\multicolumn{1}{c}{$\bm{M} = 40$}
        &\multicolumn{1}{c}{$\bm{M} = 45$} 
        &\multicolumn{1}{c}{$\bm{M} = 50$}\\
        
        \midrule
    
        \multirow{2}{*}[0.1cm]{\textbf{TPOD}}
        &\textbf{WT} $(\si{\second})$ &$30.38$ &$48.90$ &$75.56$ \\

        &\textbf{MEM} $(Gb)$ &$2.76$ &$4.21$ &$6.21$ \\
    
        \midrule
    
        \multirow{2}{*}[0.1cm]{\textbf{TT-SVD}}
        &\textbf{WT} $(\si{\second})$ &$[2.26,3.08]$ &$[3.36,3.86]$ &$[4.44,5.03]$ \\

        &\textbf{MEM} $(Gb)$ &$[0.56,0.66]$ &$[0.61,0.87]$ &$[0.80,1.11]$ \\

        \bottomrule 
    \end{tabular}
    \caption{Offline results, heat equation on a 3-$d$ domain. From top to bottom: wall time (WT) and memory allocations (MEM) associated with the construction of the $\bm{X}_{s,s}$-orthogonal basis, using \ac{tpod} and \ac{ttsvd}, respectively. The \ac{tpod} results are computed with $\varepsilon = 10^{-4}$, while for \ac{ttsvd} we display the lower and upper bounds for every $\varepsilon \in \mathcal{E}$.}
    \label{tb: rb offline heat equation}
\end{table}
 
\begin{table}[t]
    \small
    \begin{tabular}{ccccccccccc} 
        \toprule
    
        & 
        &\multicolumn{3}{c}{$\bm{M} = 40$}
        &\multicolumn{3}{c}{$\bm{M} = 45$}
        &\multicolumn{3}{c}{$\bm{M} = 50$} \\

        \cmidrule(lr){3-5} \cmidrule(lr){6-8} \cmidrule(lr){9-11}

        &\multicolumn{1}{c}{\textbf{Measure}}
        &\multicolumn{1}{c}{$\bm{\varepsilon} = 10^{-2}$}
        &\multicolumn{1}{c}{$\bm{\varepsilon} = 10^{-3}$} 
        &\multicolumn{1}{c}{$\bm{\varepsilon} = 10^{-4}$} 
        &\multicolumn{1}{c}{$\bm{\varepsilon} = 10^{-2}$}
        &\multicolumn{1}{c}{$\bm{\varepsilon} = 10^{-3}$} 
        &\multicolumn{1}{c}{$\bm{\varepsilon} = 10^{-4}$}
        &\multicolumn{1}{c}{$\bm{\varepsilon} = 10^{-2}$}
        &\multicolumn{1}{c}{$\bm{\varepsilon} = 10^{-3}$} 
        &\multicolumn{1}{c}{$\bm{\varepsilon} = 10^{-4}$}\\

        \cmidrule{2-11}
    
        \multirow{4}{*}[0.1cm]{\rotatebox[origin=c]{90}{\textbf{ST-RB}}}
        &\textbf{E} / $\bm{\varepsilon}$ &$4.58$ &$2.09$ &$2.38$ &$3.97$ &$2.40$ &$2.29$ &$4.29$ &$2.89$ &$3.00$ \\

        &\textbf{RF} / $10^4$ &$1.25$ &$0.42$ &$0.28$ &$3.40$ &$1.32$ &$0.57$ &$2.41$ &$0.81$ &$0.54$ \\

        &\textbf{SU}-\textbf{WT} / $10^3$ &$12.78$ &$10.31$ &$8.68$ &$17.75$ &$15.18$ &$13.55$ &$23.68$ &$21.82$ &$21.36$ \\

        &\textbf{SU}-\textbf{MEM} / $10^3$ &$3.67$ &$3.09$ &$2.63$ &$3.88$ &$3.68$ &$3.32$ &$6.00$ &$5.42$ &$4.88$ \\

        \cmidrule{2-11}
    
        \multirow{4}{*}[0.1cm]{\rotatebox[origin=c]{90}{\textbf{TT-RB}}}
        &\textbf{E} / $\bm{\varepsilon}$ &$7.90$ &$9.64$ &$50.30$ &$7.96$ &$8.59$ &$46.71$ &$6.61$ &$6.52$ &$26.37$ \\

        &\textbf{RF} / $10^4$ &$96.06$ &$42.03$ &$25.86$ &$136.03$ &$59.51$ &$35.27$ &$185.78$ &$81.29$ &$50.02$ \\

        &\textbf{SU}-\textbf{WT} / $10^3$ &$13.28$ &$11.68$ &$9.16$ &$17.15$ &$15.04$ &$14.55$ &$26.89$ &$ 23.76$ &$23.74$ \\

        &\textbf{SU}-\textbf{MEM} / $10^3$ &$4.45$ &$3.79$ &$2.98$ &$4.85$ &$4.40$ &$4.25$ &$7.18$ &$6.65$ &$6.64$ \\

        \bottomrule 
    \end{tabular}
    \caption{Online results, heat equation on a 3-$d$ domain. From top to bottom: average accuracy (E) normalized with respect to $\varepsilon$; reduction factor (RF), in hundreds of thousands; average computational speedup in time (SU-WT) and in memory (SU-MEM) achieved by \ac{strb} and \ac{ttrb} with respect to the \ac{hf} simulations.}
    \label{tb: online results heat equation}
\end{table} 

\subsection{Linear Elasticity Problem}
\label{subs: results elasticity}
In this subsection, we solve a transient version of the linear elasticity equation:
\begin{equation} 
    \label{eq: strong form elasticity equation}
    \begin{cases}
        \frac{\partial \underline{u}^{\mu}}{\partial t} - \bm{\nabla} \cdot (\underline{\underline{\sigma}}^{\mu} (\underline{u}^{\mu}) ) = \underline{0}  & \text{in} \ \Omega \times (0,T], \\
        \underline{u}^{\mu} = \underline{g}^{\mu}  & \text{on} \ \Gamma_D \times (0,T], \\
        \underline{\underline{\sigma}}^{\mu}(\underline{u}^{\mu}) \cdot \underline{n} = \underline{h}^{\mu} & \text{on} \ \Gamma_N \times (0,T], \\
        \underline{u}^{\mu} = \underline{u}_0^{\mu} & \text{in} \ \Omega \times \{0\}.
    \end{cases} 
\end{equation}
The displacement field $\underline{u}$ is vector-valued, requiring the inclusion of a component axis as illustrated in \eqref{eq: vector value tt}. The stress tensor $\underline{\underline{\sigma}}^{\mu}$ is expressed as
\begin{equation}
    \label{eq: stress tensor}
    \underline{\underline{\sigma}}^{\mu} (\underline{u}^{\mu}) = 2p^{\mu} \underline{\epsilon} (\underline{u}^{\mu}) + \lambda^{\mu} \bm{\nabla} \cdot (\underline{u}^{\mu}) \underline{I},
\end{equation}
where $\underline{\epsilon}$ is the symmetric gradient operator, and $\lambda^{\mu}$, $p^{\mu}$ are the Lam\'e coefficients one can express as functions of the Young modulus $E^{\mu}$ and the Poisson coefficient $\nu^{\mu}$ as
\begin{align}
    \lambda^{\mu} = \frac{E^{\mu} \nu^{\mu}}{(1+\nu^{\mu})(1-2\nu^{\mu})}; \qquad
    p^{\mu} = \frac{E^{\mu} }{2(1+\nu^{\mu})}.
\end{align} 
We consider a 3-$d$ domain 
\begin{equation*}
    \Omega = [0,1] \times [0,1/8]^2
\end{equation*}
equipped with a Dirichlet boundary defined as in \eqref{eq: boundaries 3d}, on which we impose a homogeneous condition, and three Neumann boundaries
\begin{equation}
    \begin{split}
        \Gamma_{N_1} &= \{\underline{x} = (x_1,x_2,x_3)^T \in \partial\Omega : x_1 = 1\}; \\
        \Gamma_{N_2} &= \{\underline{x} = (x_1,x_2,x_3)^T \in \partial\Omega : x_2 = 0, x_1 \in (0,1)\};\\
        \Gamma_{N_3} &= \{\underline{x} = (x_1,x_2,x_3)^T \in \partial\Omega : x_3 = 0, x_1 \in (0,1)\}. 
    \end{split}
\end{equation}
We consider the parametric data 
\begin{equation}
    \label{eq: elasticity data}
    \begin{split}
        E^{\mu}(t) &= \mu_1 e^{\sin{(2\pi t/T)}},
        \quad
        \nu^{\mu}(t) = \mu_2 e^{\sin{(2\pi t/T)}},
        \quad
        \underline{g}^{\mu}(\underline{x},t) = \underline{0}, \quad 
        u_0^{\mu}(\underline{x}) = \underline{0}, \\
        \underline{h}^{\mu}(\underline{x},t) &= \begin{cases}
            \mu_3 (1+t) \underline{n}_1 \qquad \ \ \underline{x} \in \Gamma_{N_1}, \ \underline{n}_1 = (1,0,0)^T; \\
            \mu_4 e^{\sin{(2\pi t/T)}} \underline{n}_2 \quad \underline{x} \in \Gamma_{N_2}, \ \underline{n}_2 = (0,1,0)^T; \\ 
            \mu_5 e^{\cos{(2\pi t/T)}} \underline{n}_3 \quad \underline{x} \in \Gamma_{N_3}, \ \underline{n}_3 = (0,0,1)^T.
        \end{cases} \quad 
    \end{split}
\end{equation}
The parametric domain is 
\begin{equation}
    \mathcal{D} = \left[10^{10},9\cdot 10^{10}\right] \times \left[0.25,0.42\right] \times \left[-4 \cdot 10^5,4 \cdot 10^5\right]^3.
\end{equation} 
\begin{table}[t]
    \small
    \begin{tabular}{ccccc} 
        \toprule

        &\multicolumn{1}{c}{\textbf{Measure}}
        &\multicolumn{1}{c}{\textbf{Avg.} $\bm{M} = 40$}
        &\multicolumn{1}{c}{\textbf{Avg.} $\bm{M} = 45$}
        &\multicolumn{1}{c}{\textbf{Avg.} $\bm{M} = 50$}\\
        
        \midrule
    
        \multirow{2}{*}[0.1cm]{\textbf{TPOD}}
        &\textbf{WT} $(\si{\second})$ &$25.29$ &$43.21$ &$93.15$ \\

        &\textbf{MEM} $(Gb)$ &$2.72$ &$4.46$ &$7.04$ \\
    
        \midrule
    
        \multirow{2}{*}[0.1cm]{\textbf{TT-SVD}}
        &\textbf{WT} $(\si{\second})$ &$[1.26,1.27]$ &$[1.87,1.97]$ &$[3.93,5.75]$ \\

        &\textbf{MEM} $(Gb)$ &$[0.41,0.41]$ &$[0.58,0.59]$ &$[0.64,1.21]$ \\

        \bottomrule 
    \end{tabular}
    \caption{Offline results for the transient linear elasticity problem. Metrics include wall time (WT) and memory usage (MEM) for constructing the $\bm{X}_{s,s}$-orthogonal basis using \ac{tpod} and \ac{ttsvd}. The \ac{tpod} results are computed with $\varepsilon = 10^{-4}$, while for \ac{ttsvd} we display the lower and upper bounds for every $\varepsilon \in \mathcal{E}$.}
    \label{tb: rb offline elasticity}
\end{table}

\begin{table}[t]
    \small
    \begin{tabular}{ccccccccccc} 
        \toprule
    
        & 
        &\multicolumn{3}{c}{\textbf{Avg.} $\bm{M} = 40$}
        &\multicolumn{3}{c}{\textbf{Avg.} $\bm{M} = 45$}
        &\multicolumn{3}{c}{\textbf{Avg.} $\bm{M} = 50$} \\

        \cmidrule(lr){3-5} \cmidrule(lr){6-8} \cmidrule(lr){9-11}

        &\multicolumn{1}{c}{\textbf{Measure}}
        &\multicolumn{1}{c}{$\bm{\varepsilon} = 10^{-2}$}
        &\multicolumn{1}{c}{$\bm{\varepsilon} = 10^{-3}$} 
        &\multicolumn{1}{c}{$\bm{\varepsilon} = 10^{-4}$} 
        &\multicolumn{1}{c}{$\bm{\varepsilon} = 10^{-2}$}
        &\multicolumn{1}{c}{$\bm{\varepsilon} = 10^{-3}$} 
        &\multicolumn{1}{c}{$\bm{\varepsilon} = 10^{-4}$}
        &\multicolumn{1}{c}{$\bm{\varepsilon} = 10^{-2}$}
        &\multicolumn{1}{c}{$\bm{\varepsilon} = 10^{-3}$} 
        &\multicolumn{1}{c}{$\bm{\varepsilon} = 10^{-4}$}\\

        \cmidrule{2-11}
    
        \multirow{4}{*}[0.1cm]{\rotatebox[origin=c]{90}{\textbf{ST-RB}}}
        &\textbf{E} / $\bm{\varepsilon}$ &$5.57$ &$17.02$ &$2.81$ &$8.22$ &$22.96$ &$3.92$ &$9.03$ &$20.80$ &$5.16$ \\

        &\textbf{RF} / $10^4$ &$152.88$ &$61.15$ &$43.68$ &$230.40$ &$153.60$ &$76.80$ &$330.48$ &$220.32$ &$110.16$ \\

        &\textbf{SU}-\textbf{WT} / $10^3$ &$16.51$ &$16.35$ &$16.20$ &$21.64$ &$21.63$ &$21.61$ &$31.04$ &$30.77$ &$30.75$ \\

        &\textbf{SU}-\textbf{MEM} / $10^3$ &$7.49$ &$7.49$ &$7.47$ &$9.00$ &$9.00$ &$8.99$ &$10.11$ &$10.11$ &$10.09$ \\

        \cmidrule{2-11}
    
        \multirow{4}{*}[0.1cm]{\rotatebox[origin=c]{90}{\textbf{TT-RB}}}
        &\textbf{E} / $\bm{\varepsilon}$ &$89.90$ &$61.55$ &$75.17$ &$90.01$ &$59.94$ &$16.76$ &$89.13$ &$68.19$ &$22.50$ \\

        &\textbf{RF} / $10^4$ &$305.76$ &$152.88$ &$101.92$ &$307.20$ &$230.40$ &$184.32$ &$660.96$ &$330.48$ &$264.38$ \\

        &\textbf{SU}-\textbf{WT} / $10^3$ &$16.32$ &$16.13$ &$15.94$ &$21.81$ &$21.78$ &$21.77$ &$33.48$ &$30.93$ &$30.68$ \\

        &\textbf{SU}-\textbf{MEM} / $10^3$ &$7.51$ &$7.48$ &$7.46$ &$8.92$ &$8.92$ &$8.92$ &$10.93$ &$10.92$ &$10.92$ \\

        \bottomrule 
    \end{tabular}
    \caption{Online results for the transient linear elasticity problem on a 3-$d$ domain. Metrics include: average accuracy (E), normalized with respect to $\varepsilon$; reduction factor (RF), expressed in hundreds of thousands; and average computational speedup in terms of wall time (SU-WT) and memory usage (SU-MEM), both relative to the \ac{hf} simulations.}
    \label{tb: online results elasticity}
\end{table} 
From the results shown in Tbs.~\ref{tb: rb offline elasticity}-\ref{tb: online results elasticity}, we can draw similar conclusions to those observed in the previous benchmarks. \ac{ttrb} performs a significantly cheaper subspace construction than \ac{strb} and exhibits larger constants in its convergence with respect to $\varepsilon$, due to increased hyper-reduction errors. We observe that, compared to the test case in Subsection \ref{subs: heat equation results}, the reduction factors achieved by \ac{ttrb} are no longer significantly greater than that of \ac{strb}, which explains why both methods achieve similar online speedups. This similarity in reduction factors stems from the low dimensionality of the temporal subspace in \ac{strb} -- a characteristic specific to this test case and not generally representative.
\section{Conclusions and future work}
\label{sec: conclusions}
In this work, we introduce \ac{ttrb}, a projection-based \ac{rom} that leverages the \ac{tt} decomposition of \ac{hf} snapshots as an alternative to conventional \ac{tpod}-based \ac{rb} methods. Through various numerical experiments, we demonstrate that \ac{ttrb} achieves significant offline speedup compared to its \ac{strb} counterpart. This efficiency stems from the ``split-axes'' representation, which organizes the \acp{dof} associated with \ac{fe} functions into tensors. This structure facilitates the use of tensor rank-reduction techniques, such as the \ac{tt} decomposition, which we show to be computationally more efficient than traditional \ac{tpod}-based approaches. The key innovation of the \ac{ttrb} algorithm lies in its reliance on operations whose complexity scales with the dimension of a single full-order axis, rather than the overall size of the \ac{fom}. Furthermore, the proposed method achieves superior compression in terms of reduction factor compared to traditional algorithms, making it particularly advantageous for approximating \acp{pde} in very high dimensions. From an accuracy perspective, we demonstrate that \ac{ttrb} provides approximation capabilities comparable to those of \ac{strb}, as evidenced by our numerical results.

A promising direction for future work is extending our method to parameterized problems defined on non-Cartesian grids. To this end, we plan to explore unfitted element discretizations that enable the use of the ``split-axes'' representation of the snapshots. We anticipate no significant theoretical challenges, as the \ac{ttrb} algorithm is largely independent of the discretization or geometry. Additionally, we aim to tackle more complex and practically relevant problems, such as saddle point problems and nonlinear applications.

\printbibliography
\end{document}